\newcommand{\rrvert}{\vert}
\newcommand{\N}{\mathbb N}
\newcommand{\Ocal}{\mathcal O}
\newcommand{\Pro}{\mathbb P}
\newcommand{\Pcal}{\mathcal P}
\newcommand{\Q}{\mathbb Q}
\newcommand{\Qcal}{\mathcal Q}
\newcommand{\Z}{\mathbb Z}
\newcommand{\GL}{\mathrm{GL}}
\newcommand{\Sym}{\mathrm{Sym}}
\newcommand{\Spec}{\mathrm{Spec}}
\newcommand{\pr}{\mathrm{pr}}
\newcommand{\ov}{\overline}
\newcommand{\im}{\operatorname{Im}}
\newcommand{\wh}{\widehat}
\newcommand{\wt}{\widetilde}
\newcommand{\ev}{\mathrm{ev}}
\newcommand{\Char}{\mathrm{char}}
\theoremstyle{plain}
\newtheorem{theorem}{Theorem}[section]
\newtheorem*{maintheorem}{Main Theorem}
\newtheorem{lemma}[theorem]{Lemma}
\newtheorem{propos}[theorem]{Proposition}
\newtheorem{corol}[theorem]{Corollary}
\theoremstyle{remark}
\newtheorem{defi}[theorem]{Definition}
\newtheorem{rmk}[theorem]{Remark}
\title{The Chow Ring of the Stack of Smooth Plane Cubics.}
\author{Damiano Fulghesu}
 \thanks{The first author was partially supported by Scuola Normale Superiore and by Simons  Foundation grant \#360311. The second author was partially supported by research funds from the Scuola Normale Superiore (project SNS13VISTB) and from the PRIN project ``Geometria delle Variet\`a Algebriche''}
 \author{Angelo Vistoli}
\address[Damiano Fulghesu]{Department of Mathematics, Minnesota State University, 1104 7th Ave South, Moorhead, MN 56563, U.S.A.}\email{fulghesu@mnstate.edu}
\address[Angelo Vistoli]{Scuola Normale Superiore, Piazza dei Cavalieri 7, 56126 Pisa, Italy}\email{angelo.vistoli@sns.it}
\subjclass[2010]{14C15 (primary),  	14L30, 14D23 (secondary)}
\begin{document}

\begin{abstract}
In this paper we give an explicit presentation of the integral Chow ring of a stack of smooth plane cubics. We also determine some relations in the general case of hypersurfaces of any dimension and degree.
\end{abstract}

\maketitle

\section{Introduction}
%%%%%%%%%%

Equivariant intersection theory was introduced by Edidin and Graham \cite
{EG}; it is of considerable interest, as it gives an intrinsic
integer-valued intersection theory on quotient stacks. In particular,
if $\mathcal{X}$ is a quotient stack $[U/G]$, where $U$ is a smooth
scheme of finite type over a field~$k$ and $G$ is an affine algebraic
group on $k$, then we obtain a Chow ring $A^{*}_{G}(U) = A^{*}(\mathcal
{X})$, which only depends on $\mathcal{X}$ and not on the presentation
of $\mathcal{X}$ as a quotient stack. If $\mathcal{X}$ is
Deligne--Mumford, or, equivalently, the action of $G$ on $U$ has finite
reduced stabilizers, then $A^{*}(\mathcal{X})\otimes\mathbb{Q}$
coincides with the rational Chow ring of $\mathcal{X}$, which had been
earlier studied by several authors \cite{Mum,Gil,Vis89}.

The ring $A^{*}(\mathcal{X})$ is usually much harder to compute than
$A^{*}(\mathcal{X})\otimes\mathbb{Q}$; for example, consider the
moduli stack $\mathcal{M}_{g}$ of smooth curves of genus~$g \geq2$;
the ring $A^{*}(\mathcal{M}_{g})$ has been computed only for $g = 2$
\cite{Vis98} (notice that, in this case, $A^{*}(\mathcal
{M}_{2})\otimes\mathbb{Q} = \mathbb{Q}$), whereas $A^{*}(\mathcal
{M}_{g})\otimes\mathbb{Q}$ is known for $g \leq6$ \cite{Mum,Fab,Iza,PV}, and, more importantly, it is the subject of an extensive
theory that has no parallel for integral Chow rings.

On the positive side, the ring $A^{*}(\mathcal{X})$ has been computed
when $\mathcal{X}$ is the stack of smooth hyperelliptic curves of
genus~$g$ when $g$ is a positive even number, and when $\mathcal{X}$
is the stack of rational nodal curves with at most one node.

In all these calculations the essential point is the determination of
the Chow ring of certain stacks of hypersurfaces. More precisely, let
$n$ and $d$ be positive integers. We define a stack $\mathcal
{X}_{n,d}$ as follows: an object of $\mathcal{X}_{n,d}$ over a
$k$-scheme $S$ consists of a vector bundle $F$ of rank~$n$, and a
Cartier divisor $X \subseteq\mathbb{P}(F)$ whose restriction to every
fiber is a smooth hypersurface of degree~$d$ (here, as everywhere else,
we follow \cite{Ful} and use the classic convention for the
projectivization of a vector bundle, so our $\mathbb{P}(F)$ would be
denoted by $\mathbb{P}(F^{\vee})$ in Grothendieck's convention).

An alternate description of $\mathcal{X}_{n,d}$ is as follows. Denote
by $W_{n,d}$ the vector space of homogeneous polynomials of degree~$d$
in $n$ variables with its natural action of $\GL_{n}$. Set $P_{n,d} =
\mathbb{P}(W_{n,d})$; so $P_{n,d}$ is the projective space of
hypersurfaces of degree $d$ in $\mathbb{P}^{n-1}$. If $Z \subseteq
P_{n,d}$ is the discriminant locus, then we have
\[
\mathcal{X}_{n,d} = [(P_{n,d} \setminus Z)/
\GL_{n}] .
\]
By standard facts of equivariant intersection theory this gives a set
of generators for the ring $A^{*}(\mathcal{X}_{n,d}) = A^{*}_{\GL
_{n}}(P_{n,d} \setminus Z)$, which are the Chern classes $c_{1},
\dots, c_{n}$ of the tautological representation of $\GL_{n}$, and
$h = c_{1} (\mathcal{O}_{P_{n,d}}(1) )$. The relations among
these generators $c_{1}, \dots, c_{n}$ and $h$ are obtained from the
classes of the image of the pushforward $A^{\GL_{n}}_*(Z) \to A^*_{\GL
_{n}}(P_{n,d})$.

A set of natural relations is obtained as follows. Let $\widetilde{Z}
\subseteq P_{n,d} \times\mathbb{P}^{n-1}$ be the reduced subscheme
consisting of pairs $(X, p)$, where $X$ is a hypersurface of degree~$d$
in $\mathbb{P}^{n-1}$, and $p$ is a singular point of $X$. Then $Z$ is
the image of $\widetilde{Z}$ in $P_{n,d}$, and hence every class in
$A^{*}_{\GL_{n}}(\widetilde{Z})$ when pushed down to $A^{*}_{\GL
_{n}}(P_{n,d})$ gives a relation in $A^{*}_{\GL_{n}}(P_{n,d})$. The
image of the pushforward $A^{*}_{\GL_{n}}(\widetilde{Z}) \to
A^{*}_{\GL_{n}}(P_{n,d})$ is easily determined (see Theorem~\ref
{generators.I.Z.tilde}); this gives certain relations $\alpha_{1}$,
\dots,~$\alpha_{n} \in\mathbb{Z}[c_{1}, \dots, c_{n}, h]$. When $d
= 2$ or $n = 2$, it is proved in \cite{Pan,EF,EFh} that $\alpha
_{1}$, \dots,~$\alpha_{n}$ generate the ideal of relations, so that
(Theorem \ref{lower.cases})
\[
A^{*}(\mathcal{X}_{n,d}) = \mathbb{Z}[c_{1},
\dots, c_{n}, h]/(\alpha_{1}, \dots, \alpha_{n})
 .
\]
With rational coefficients, it is easy to verify that the classes
$\alpha_{i}$ generate the ideal of relations of the generators in
$A^{*}(\mathcal{X}_{n,d})\otimes\mathbb{Q}$ (see Remark \ref
{rational.coefficients}).

The main purpose of this paper is to investigate the first case that is
not covered by Theorem \ref{lower.cases}, namely $A^{*}(\mathcal
{X}_{3,3})$. The stack $\mathcal{X}_{3,3}$ can alternatively be
thought of as the stack in which an object $(C, L)$ over a $k$-scheme
$S$ is a family $C \to S$ of smooth curves of genus~$1$ together with
an invertible sheaf $L$ on $X$ whose degree when restricted to every
fiber is 3.

It turns out the $\alpha_{i}$'s are not sufficient to generate the
whole ideal of relations, but we need to add a polynomial $\delta_{2}
\in\mathbb{Z}[c_{1}, c_{2}, c_{3}, h]$ of degree~$2$ with the
property that $2\delta_{2} \in(\alpha_{1}, \alpha_{2}, \alpha
_{3})$. The following is our main result.

\begin{maintheorem}
The ring $A^{*}(\mathcal{X}_{3,3})$ is the quotient
\[
\mathbb{Z}[c_{1}, c_{2}, c_{3}, h]/(
\alpha_{1}, \alpha_{2}, \alpha _{3},
\delta_{2}),
\]
where
\begin{align*}
\begin{split} \alpha_1&=12(h - c_1),
\\
\alpha_2&= 6h^2 - 4h c_1 - 6
c_2,
\\
\alpha_3&= h^3 - h^2c_1 + h
c_2-9c_3,
\\
\delta_2 &=21h^2 - 42h c_1 +
9c_2 + 18c_1^2. \end{split} %
\end{align*}

We have $\delta_{2} \notin(\alpha_{1}, \alpha_{2}, \alpha_{3})$,
whereas $2\delta_{2} \in(\alpha_{1}, \alpha_{2}, \alpha_{3})$.
\end{maintheorem}

The next natural case to be considered is $A^*(\mathcal{X}_{3,4})$,
which is a current work in progress. This is particularly interesting,
since it would allow us to determine the integral Chow ring of
$\mathcal M_3 \setminus\mathcal H_3$, that is, the stack of
smooth nonhyperelliptic curves of genus 3.

\subsection*{Strategy of Proof and Description of Content}
Section~\ref{preliminaries} introduces the basic notation and reviews
some general results about $\GL_n$-equivariant Chow groups, which
constitute the fundamental tools used in the proof of the Main Theorem.
In particular, we give a reduction result to torus action (Lemma \ref
{algebraic.lemma}), an explicit form of torus equivariant hyperplane
classes (Lemma \ref{hyperplane.T.classes}), and an explicit
localization theorem for torus actions (Theorem \ref{explicit.localization}).

The real action starts in Section~\ref{universal-locus}; here, and in
the following section, we give some general results on $A^{*}(\mathcal
X_{n,d})$. We set up the notation and give a formula for the class in
$A^{*}_{\GL_{n}}(P_{n,d})$ of hypersurfaces that split as sums of $s$
hypersurfaces of degrees~$d_{1}$, \dots,~$d_{s}$ with $d_{1} + \cdots+
d_{s} = d$ (Theorem~\ref{delta.classes}).

In the next section, we study the ideal $I_{\widetilde{Z}} \subseteq
A^{*}(P_{n,d})$, which is the image of the pushforward $A^{*}_{\GL
_{n}}(\widetilde{Z})$. We give explicit formulas for the generators
$\alpha_{1}$, \dots,~$\alpha_{n}$ (Theorem~\ref
{generators.I.Z.tilde}), show that
\[
A^{*}_{\GL_{n}}(P_{n,d}) \otimes\mathbb Q = \mathbb
Q[c_{1}, \dots, c_{n}, h]/(\alpha_{1}, \dots,
\alpha_{n})
\]
(Remark \ref{rational.coefficients}), and prove that the intrinsic
relation satisfied by the hyperplane class $h$ is in $I_{\widetilde
{Z}}$ (Proposition~\ref{big.polynomial}).

Next, we specialize to the case $n = d = 3$ in the last section,
Section~\ref{plane.cubics}. We write down the classes $\alpha_{1}$,
$\alpha_{2}$, and $\alpha_{3}$ as they come out of Theorem~\ref
{generators.I.Z.tilde}.

We use Theorem~\ref{delta.classes} to show that $\delta_{2}$
represents the class of the locus $Z_{2} \subseteq P_{3,3}$ of
reducible cubics; then we need to show that the image of the
pushforward $i_{*}: A_*^{\GL_{n}}(Z) \to A^{*}_{\GL
_{n}}(P_{n,d})$ is in the ideal generated by $(\alpha_{1}, \alpha
_{2}, \alpha_{3}, \delta_{2})$.

For this purpose, we define a stratification of the discriminant locus
$Z$ (Definition~\ref{strata}). If $T$ is one of the strata, denote by
$\overline{T}$ its closure in $P_{3,3}$. For each $T$, we need to show
that every class in $A^{*}_{\GL_{3}}(P_{3,3})$ supported in $\overline
{T}$ is the sum of a class in $(\alpha_{1}, \alpha_{2}, \alpha_{3},
\delta_{2})$ and a class supported in $\overline{T} \setminus T$;
then the result follows by descending induction on the codimension of
the strata (see Section~\ref{basic.principle} for a fuller
explanation). In some cases the elements of $T$ have a distinguished
singular point (for example, this happens for the stratum $Z_{(3,1)}$
consisting of unions of a smooth conic and a line that is tangent to a
point); this gives a lifting of the embedding $T \subseteq P_{3,3}$ to
a morphism $T \to\widetilde{Z}$, which makes what we need to prove
obvious. In other cases, we produce a finite $\GL_{3}$-equivariant
morphism $\overline{T}_{1} \to\overline{T}$. We have to use ad hoc
arguments, together with Theorem~\ref{delta.classes}, to show that the
image of the pushforward $A_*^{\GL_{3}}(\overline{T}) \to A^*_{\GL
_{3}}(P_{3,3})$ coincides with the image of $A^*_{\GL_{3}}(\overline
{T}_{1}) \to A^*_{\GL_{3}}(P_{3,3})$ up to classes supported in
$\overline{T} \setminus T$. Finally, we show that the image of
$A^*_{\GL_{3}}(\overline{T}_{1}) \to A^*_{\GL_{3}}(P_{3,3})$ is
contained in $(\alpha_{1}, \alpha_{2}, \alpha_{3}, \delta_{2})$.

More precisely, Section~\ref{Z31} is dedicated to the proof of the
fact that classes in $A^{*}_{\GL_{3}}(P_{3,3})$ supported in the
closure of the locus of conics together with a tangent line are in
$I_{\widetilde{Z}}$. Section~\ref{Z32} contains the hardest step
of the proof, the fact that classes supported in the locus consisting
of sums of three lines are in $I_{\widetilde{Z}}$; this is technically
rather involved. In Section~\ref{Z2}, we compute the class $\delta
_{2}$ of the locus of reducible curves, and we show that the classes
supported in this locus are in $(\alpha_{1}, \alpha_{2}, \alpha_{3},
\delta_{2})$.

The proof of the Main Theorem is finally concluded in Section~\ref{conclusion}.

Some of the calculations have been carried out by using Maple 16.

%%%%%%%%%%%%%%%%%%%%%%%%%%%%%%%%%%%%%%%%
\section{Preliminaries on $\GL_n$-Equivariant Chow Groups}\label
{preliminaries}
%%%%%%%%%%%%%%%%%%%%%%%%%%%%%%%%%%%%%%%%

\subsection{Intersection Ring of $B \GL_n$}
We work on a base field $k$ of characteristic 0 or greater than a fixed
integer $d \geq2$.
Let $n \geq2$ be another integer, and let $E$ be the standard
representation of $\GL_n$. The stack $[E^{\vee}/ \GL_n]$ is a vector
bundle over $B \GL_n$ whose Chern roots are $l_1, \dots, l_n$. On the
other hand, let $c_1, \dots, c_n$ be the Chern classes of $[E/\GL
_n]$, so we have
\begin{eqnarray*}
c_1 &=&-(l_1 + \cdots+ l_n),
\\
&\dots &
\\
c_i &=& (-1)^i s_i(l_1, \dots,
l_n),
\\
&\dots &
\\
c_n &=& (-1)^n l_1\cdots l_n,
\end{eqnarray*}
where $s_i ( x_1, \dots, x_n  )$ is the $i$th symmetric
polynomial in $n$ variables.
Let $T$ be the maximal torus for $\GL_n$ represented by diagonal
matrices. The total character of the $T$-module $E$ can be expressed as
a sum of linearly independent characters $\lambda_1, \dots, \lambda
_n$, and therefore we have $A^{*}_T=\Z [ c_1 ( \lambda_1
 ), \dots, c_1 ( \lambda_n  )  ]$. According
to our notation, we have $l_i=-c_1 ( \lambda_i  )$, and we
will identify $A^*_T$ with $\Z [ l_1, \dots, l_n  ]$.
Similarly, we can see the Weyl group $S_n$ as acting on $A^*_T$ by
permuting the classes $l_i$. and we have $A^*_{\GL_n}= ( A^*_T
 )^{S_n}$.

\subsection{Reduction to the Torus Action} Let $X$ be a smooth
$G$-space, that is, $X$ is a smooth algebraic space with an action of $G$,
\[
\alpha: G \times X \to X,
\]
where $\alpha$ is also a morphism of algebraic spaces.

Throughout this paper, we consider equivariant intersection Chow rings
of the form $A^*_G(X)$ where $G$ will be the group $T$ or ${\GL_n}$.
In the cases we are interested, the ring $A^*_G(X)$ will have the
structure of a finitely generated $A^*_G$-algebra. In particular, there
is an isomorphism
\[
A^*_{G}(X) \cong\frac{A^*_G[x_1, x_2, \dots, x_r]}{I}
\]
for a suitable set of variables $x_1, \dots, x_r$ and a suitable ideal
$I$ called {\it ideal of relations}. For this reason, we will usually
write a class in $A^*_G(X)$ as a polynomial in several variables with
coefficients in $A^*_G$ leaving the ideal of relations implicit. We
will use the following algebraic lemma.

\begin{lemma}\label{algebraic.lemma}
Let $G$ be a special algebraic group, and let $T \subset G$ be a
maximal torus.
\begin{enumerate}[II)]
\item[I)] Let $X$ be a smooth $G$-space. Let $I \subset A^*_{G} ( X
 )$ be an ideal. Then
\[
I A^*_{T} ( X ) \cap A^*_{G} ( X ) = I.
\]
\item[II)] Let $\{ x_1, \dots, x_r \}$ be a set of variables, and let $I
\subset A^*_{G} [ x_1, \dots, x_r  ] $ be an ideal. Then
\[
I A^*_{T} [ x_1, \dots, x_r ] \cap
A^*_{G} [ x_1, \dots, x_r ] = I.
\]
\end{enumerate}
\end{lemma}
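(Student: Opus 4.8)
The plan is to derive both statements from a single structural fact: for a special group $G$ with maximal torus $T$, the pullback $\pi^*\colon A^*_G(X) \to A^*_T(X)$ is a \emph{split} monomorphism of $A^*_G(X)$-modules. Recall the standard identification $A^*_T(X) = A^*_G\bigl((G/T) \times X\bigr)$, with $G$ acting diagonally, under which $\pi^*$ becomes pullback along the $G$-equivariant projection $p\colon (G/T) \times X \to X$. Since $G$ is special, $G \to G/T$ is Zariski-locally trivial; moreover $G/T$ carries a cellular decomposition, coming from the Bruhat cells of $G/B$ (to which $G/T$ maps by an affine bundle when $G$ is reductive, the general case reducing to this by collapsing the unipotent radical). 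Hence $p$ is a cellular fibration, and by the standard structure theory of such fibrations (see \cite{Ful}), applied as in \cite{EG} to the scheme-level approximations of $[X/G]$, the group $A^*_G\bigl((G/T) \times X\bigr)$ is a free $A^*_G(X)$-module on a basis $\{\beta_w\}_{w\in W}$ whose members restrict on each fiber to the classes of the closed Bruhat cells. In particular the member attached to the big cell is the fundamental class $1$, so projection onto that coordinate is an $A^*_G(X)$-linear retraction $\rho\colon A^*_T(X) \to A^*_G(X)$ with $\rho\circ\pi^* = \mathrm{id}$.

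Granting this, part~I) is immediate. The inclusion $I \subseteq I A^*_T(X)\cap A^*_G(X)$ is trivial. Conversely, if $\xi\in A^*_G(X)$ can be written $\xi = \sum_j \iota_j\eta_j$ in $A^*_T(X)$ with $\iota_j\in I$ and $\eta_j\in A^*_T(X)$, then applying $\rho$ and using its $A^*_G(X)$-linearity together with $\rho(\xi)=\xi$ gives $\xi = \sum_j \iota_j\,\rho(\eta_j)\in I$. Hence $I A^*_T(X)\cap A^*_G(X) = I$.

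For part~II) I would specialize the first paragraph to $X = \Spec k$, obtaining that $A^*_T$ is a free $A^*_G$-module with $1$ among a basis, together with an $A^*_G$-linear retraction $\rho_0\colon A^*_T\to A^*_G$. Base change along $A^*_G\hookrightarrow A^*_G[x_1,\dots,x_r]$ then presents $A^*_T[x_1,\dots,x_r] = A^*_T\otimes_{A^*_G}A^*_G[x_1,\dots,x_r]$ as a free $A^*_G[x_1,\dots,x_r]$-module with $A^*_G[x_1,\dots,x_r]$-linear retraction $\rho_0\otimes\mathrm{id}$ of the inclusion. The computation of part~I) now applies verbatim with $A^*_G(X)$ replaced by $A^*_G[x_1,\dots,x_r]$ and $A^*_T(X)$ by $A^*_T[x_1,\dots,x_r]$.

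Everything past the first paragraph is formal bookkeeping with the retraction, so the one real point is the freeness/splitting claim — that $A^*_G\bigl((G/T)\times X\bigr)$ is a free $A^*_G(X)$-module with $1$ among the basis. The anticipated obstacle is purely technical: making the reduction to the scheme approximations $[U/G]$ of \cite{EG} precise enough to invoke the cellular-fibration decomposition, and checking that passing to the limit over approximations is harmless. Once that is pinned down, the $1$-component retraction and the two module-theoretic arguments above complete the proof.
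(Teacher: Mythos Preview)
Your proposal is correct and follows essentially the same route as the paper: the key input is that $A^*_G(X)$ is a direct summand of the $A^*_G(X)$-module $A^*_T(X)$, after which both parts are the formal retraction argument you give. The paper simply cites \cite[Proposition~2.2]{EFh} for the summand statement and \cite{HE} for the contraction $IS\cap R=I$, whereas you sketch why the splitting holds via the cellular structure of $G/T$; the remaining steps (specializing to $X=\Spec k$ and base-changing to the polynomial ring for part~II) are identical.
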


\begin{proof}
From \cite[Proposition~2.2]{EFh} we have that $A^*_{G} ( X  )$ is
a (noncanonical) summand of the $A^*_{G} ( X  )$-module
$A^*_{T} ( X  )$. Now, in general, if $R \subset S$ is a
ring extension, $R$ is a summand of $S$, and $I \subset R$ is an ideal,
then $IS \cap R = I$ (see, for example, \cite[Propositions~9 and 10]{HE}).
This concludes part I).

For part II), we first apply part I) by considering $X=\Spec(k)$.
Consequently $A^*_{G}$ is a summand of $A^*_{T}$. Now let us write
$A^*_{T} \cong A^*_{G} \oplus M$ for some submodule $M$. Then we have
(to prove this, use induction on $r$ in \cite[Chapter~2, Example~6]{AM})
\[
A^*_{T} [ x_1, \dots, x_r ] \cong
A^*_{G} [ x_1, \dots, x_r ]
\otimes_{A^*_G} A^*_T \cong A^*_{G} [
x_1, \dots, x_r ] \oplus M [ x_1, \dots,
x_r ].
\]
\end{proof}

\begin{rmk}\label{remark.algebraic.lemma}
We will apply the above Lemma \ref{algebraic.lemma} in the following
way. Let $\gamma\in A^*_{\GL_n} ( X  )$ (resp. $\gamma\in
A^*_{\GL_n} [ x_1, \dots, x_r  ]$), and let $I \subset
A^*_{\GL_n} ( X  )$ (resp. $I \subset A^*_{\GL_n} [
x_1, \dots, x_r  ]$) be an ideal. If $\gamma\in IA^*_{T} (
X  )$, then $\gamma\in I$.
\end{rmk}

\subsection{Equivariant Intersection Theory on Projective Spaces}

Let $W$ be a $\GL_n$-representation of dimension $q$. The vector space
$W$ is equipped with an induced $T$-action. We have also a canonical
action of $\GL_n$ (resp.\ $T$) on $\Pro ( W )$. Let $G$ be
either $\GL_n$ or $T$. Let $h=c^G_1 ( \Ocal_{\Pro(W)}(1)
)$, and let $r_1, \dots, r_k$ be the Chern roots of $W$. We have an
exact sequence of $A^*_G$-modules (see \cite[Lemma 2.3]{EFh})
\begin{equation}
\label{exact.sequence} \xymatrix{
0 \ar[r] & J \ar[r] & A^*_{G}[x] \ar[rr]_(0.45){\ev_h} &
&A^*_{G} ( \Pro(W)  ) \ar[r] & 0,
}
\end{equation}
where $\ev_h$ is the evaluation morphism at $h$, and the ideal of
relations $J$ is the principal ideal generated by the polynomial
$P(x):=\prod_{i=1}^q  ( x + r_i  )$. Notice that the exact
sequence (\ref{exact.sequence}) induces the isomorphism
\[
A^*_{G} ( \Pro(W) ) \cong\frac{A^*_{G}[x]}{J}.
\]

\begin{rmk}\label{linear.combinations}
The Chern roots $r_i$ are linear combinations of $l_1, \dots, l_n$
with integral coefficients. However, $P(x)$ can be written as a
polynomial with coefficients in $A^*_{\GL_n}$.
\end{rmk}

Since $P(x)$ is a monic polynomial of degree $q$, $A^*_{G}  ( \Pro(W)
)$ is an $A^*_{G}$-module freely generated by the set $\{ h^i|   0\leq
i < q \}$. So we can define a splitting morphism $\psi:
A^*_{G} ( \Pro(W)  ) \to A^*_{G}[x]$ of $A^*_G$-modules as follows.
\begin{defi}\label{splitting.morphism}
Let $\gamma\in A^*_{G}  ( \Pro(W)  )$. We define $\psi
(\gamma):=Q(x)$, where $Q(x)$ is the unique polynomial in $A^*_{G}[x]$
whose degree in $x$ is less than $q$ and $Q(h)=\gamma$.
\end{defi}

The polynomial $P(x)$ depends on the Chern roots $r_i$ of the
representation $W$. However, as mentioned in Remark \ref
{linear.combinations}, the Chern roots $r_i$ are linear combinations of
the classes $l_1, \dots, l_n$ with integral coefficients. The
polynomial $P(x)$ is uniquely determined by such integral coefficients,
and it has a combinatorial flavor. The following notation is introduced
to write the polynomial $P(x)$ in an explicit form, which is
computationally useful.

We define the set $\Pcal_d$ of (unordered) partitions of $d$. For example,
\[
\Pcal_4:=\{ \{4\}, \{3,1\}, \{2, 2\}, \{2, 1, 1\}, \{1,1,1,1\} \}.
\]

\begin{defi}
Let $\mu\in\Pcal_d$. We define the set
\[
\N^n(\mu):= \{ v \in\N^n \mid   \mu=\{ v_i
\neq0\} \};
\]
in other words, we say that a vector $v \in\N^n$ is in $\N^n(\mu)$
if the set of nonzero entries of $v$ is $\mu$.
\end{defi}

For example, let $\mu:= \{ 3,1 \} \in\Pcal_4$. Then, by definition,
we have
\[
\N^3(\mu)= \{ (0,1, 3), (0, 3, 1), (1, 0, 3), (3, 0, 1), (1,3,0),
(3,1, 0) \}.
\]

\begin{defi}
For every natural number $q \in\N$, we define the set
\[
\N^n(q):= \{ v \in\N^n |   |v| = q \}.
\]
\end{defi}

For example, by definition, we have
\[
\N^3(2):= \{ (2,0,0), (0,2,0), (0,0,2), (1,1,0), (1,0,1), (0,1,1)
\}.
\]

\begin{defi}
%We follow the above notation.
Let $\mu\in\Pcal_d$. We define the following polynomial in $A^*_T[x]$:
\[
P_{\mu}(x):=\prod_{v   \in  \N^n(\mu)} (x + v \cdot l),
\]
where $l$ is the vector $\langle l_1, \dots, l_n \rangle$ in the free $\Z$-module
$ ( A^1_{T}  )^n$.
\end{defi}

For example, for $n=3$,
\begin{eqnarray*}
P_{\{ 2,1,1\} }(x)&=&(x + 2l_1 + l_2 + l_3)
(x + l_1 + 2l_2 + l_3) (x + l_1 +
l_2 + 2l_3);
\\
P_{\{ 3,1\}}(x) &=& (x+3l_1+l_2)
(x+3l_1+l_3) (x+3l_2+l_1)\\
&&{}\times
(x+3l_2+l_3) (x+3l_3+l_1)
(x+3l_3+l_2).
\end{eqnarray*}
Notice that, for every $\mu$, the polynomial $P_{\mu}(x)$ is
symmetric with respect to the classes $l_i$, and therefore it can be
written as an element of $\Z[c_1, \dots, c_n][x]=A^*_{\GL_n}[x]$. We
will effectively use the polynomials $P_{\mu}(x)$ in Proposition \ref
{big.polynomial}.

We now recall two results that will be used extensively later in order
to perform computations.

The following lemma will allow us to write explicitly the equivariant
class of a $T$-invariant hypersurface.

\begin{lemma}[{\cite[Lemma 2.4]{EFh}}]\label{hyperplane.T.classes}
 Let $H \subset\Pro(W)$ be a $T$-invariant
hypersurface defined by a homogeneous equation $F=0$ of degree $d$ such
that $z \cdot F=\chi^{-1}(z)F$ for some character $\chi:T \to\mathbb
G_m$. Then we have the following identity in $A^*_T  ( \Pro(W)
 )$:
\[
[ H ]_T = c_1^T ( \Ocal_{\Pro(W)}(d) )
+ c_1(\chi).
\]
\end{lemma}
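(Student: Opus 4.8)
The plan is to prove Lemma~\ref{hyperplane.T.classes} directly from the exact sequence~(\ref{exact.sequence}) together with the $T$-linearization carried by the ideal sheaf of $H$. First I would observe that, since $H \subseteq \Pro(W)$ is a $T$-invariant hypersurface of degree~$d$, its ideal sheaf is a $T$-equivariant invertible subsheaf of $\Ocal_{\Pro(W)}$ isomorphic, as a line bundle, to $\Ocal_{\Pro(W)}(-d)$. The equivariant structure on $\Ocal_{\Pro(W)}(-d)$ is not the ``standard'' one: the defining equation $F$, viewed as a $T$-equivariant section of $\Ocal_{\Pro(W)}(d)$, is scaled by the character $\chi^{-1}$ under the hypothesis $z\cdot F=\chi^{-1}(z)F$, so the equivariant ideal sheaf is $\Ocal_{\Pro(W)}(-d)\otimes\chi$ with its induced linearization. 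Dualizing, the equivariant line bundle $\Ocal_{\Pro(W)}(H)$ whose canonical section cuts out $H$ is $\Ocal_{\Pro(W)}(d)\otimes\chi^{-1}$ — but one must be careful about the sign of the twist; I will fix this by tracking the section rather than the bundle.

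The cleanest route is via first Chern classes of equivariant line bundles. The key point is that for a $T$-equivariant effective Cartier divisor $H$ cut out by an equivariant section $s$ of an equivariant line bundle $\mathcal{L}$, one has $[H]_T = c_1^T(\mathcal{L})$ in $A^*_T(\Pro(W))$. So the computation reduces to identifying the equivariant line bundle $\mathcal{L}$ together with its linearization. I would argue: the polynomial $F$ of degree~$d$ is a global section of $\Ocal_{\Pro(W)}(d)$; the hypothesis $z\cdot F = \chi^{-1}(z)F$ says precisely that, with the natural linearization on $\Ocal_{\Pro(W)}(d)$, the section $F$ spans a subrepresentation isomorphic to the character $\chi^{-1}$. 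Twisting the linearization of $\Ocal_{\Pro(W)}(d)$ by $\chi$ makes $F$ an invariant section, and then $[H]_T = c_1^T\bigl(\Ocal_{\Pro(W)}(d)\otimes\chi\bigr) = c_1^T(\Ocal_{\Pro(W)}(d)) + c_1(\chi)$, since first Chern classes are additive under tensor product and $c_1^T$ of the line bundle attached to a character $\chi$ is $c_1(\chi)$. This yields exactly the claimed identity
\[
[H]_T = c_1^T(\Ocal_{\Pro(W)}(d)) + c_1(\chi).
\]

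The main obstacle, and the only place where care is genuinely needed, is the bookkeeping of which character appears and with which sign — that is, confirming that the twist is by $+\chi$ and not $-\chi$. This hinges on the convention for how $T$ acts on sections versus on the total space of $\Ocal_{\Pro(W)}(1)$, and on the identity $z\cdot F = \chi^{-1}(z)F$ being stated with $\chi^{-1}$ precisely so that the final answer comes out with $+c_1(\chi)$. I would verify this on the universal example: take $W$ with a one-dimensional summand on which $T$ acts by a character, write down the tautological hyperplane explicitly, and check that the formula reproduces the known equivariant hyperplane class; this pins down the sign unambiguously. Since the statement is quoted from \cite[Lemma 2.4]{EFh}, an alternative is simply to cite that reference, but the self-contained argument above via equivariant line bundles and the additivity of $c_1^T$ is short enough to include.
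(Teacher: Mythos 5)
The paper itself does not prove this lemma — it is quoted verbatim from \cite[Lemma 2.4]{EFh} and used as a black box — so there is no internal proof to compare against. Your self-contained argument via equivariant line bundles is correct and is the standard way to establish this: a $T$-invariant effective Cartier divisor cut out by a $T$-invariant section of an equivariant line bundle $\mathcal L$ has equivariant class $c_1^T(\mathcal L)$; the hypothesis $z\cdot F=\chi^{-1}(z)F$ says that with the natural linearization $F$ spans the character $\chi^{-1}$ inside $\Gamma(\Ocal_{\Pro(W)}(d))$, so tensoring the linearization by the one-dimensional representation $\chi$ renders $F$ invariant; and then additivity of $c_1^T$ under tensor product gives
\[
[H]_T \;=\; c_1^T\bigl(\Ocal_{\Pro(W)}(d)\otimes\chi\bigr)\;=\;c_1^T\bigl(\Ocal_{\Pro(W)}(d)\bigr)+c_1(\chi).
\]
The one place where you leave a stub (``I would verify this on the universal example'') is indeed the only delicate point — the sign of the twist — but your bookkeeping is already correct: the $\chi^{-1}$ on the action side is precisely what produces a $+\chi$ twist and hence $+c_1(\chi)$ in the class. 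You could close that loose end in one line by taking $W=\mathbb{A}^1$ with $T=\mathbb G_m$ acting by weight $1$, so $\Pro(W)$ is a point, $F=X_0^d$, $z\cdot F=z^{-d}F$ gives $\chi=\lambda^d$, and the formula reads $[H]_T=0+d\,c_1(\lambda)$, matching $c_1^T(\Ocal(d))$ on $B\mathbb G_m$. In short: a correct, essentially complete alternative to citing the reference; the argument is not ``different from the paper's'' because the paper offers none.
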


The following theorem is also known as an {\it explicit localization formula}.

\begin{theorem}[{\cite[Theorem~2]{EGL}}]\label{explicit.localization}
 Define the $A^*_T$-module
\[
\Qcal:= ( ( A^*_T )^+ )^{-1}A^*_T,
\]
where $ (  ( A^*_T )^+ )^{-1}$ is the
multiplicative system of the reciprocals of homogeneous elements of
$A^*_T$ of positive degree.

Let $X$ be a smooth $T$-variety and consider the locus $F \subset X$ of
fixed points for the action of $T$. Let $\bigcup_{j \in I} F_j =F$ be
the decomposition of $F$ into irreducible components. For every $\gamma
\in A^T_*(X) \otimes\Qcal$, we have the identity
\[
\gamma=\sum_{j \in I} i_{j*}
\frac{i_j^* (\gamma)}{c^T_{\mathrm{top}} ( N_{F_j}X )},
\]
where, for all $j \in I$, the map $i_j$ is the inclusion $F_j \to X$,
and $N_{F_j}X$ is the normal bundle of $F_j$ in $X$.
\end{theorem}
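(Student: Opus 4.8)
The final statement is the equivariant Chow-theoretic analogue of the Atiyah--Bott/Berline--Vergne localization formula, in the form proved by Edidin--Graham; the plan is to recall the argument of \cite{EGL}. It rests on two ingredients: the \emph{localization theorem}, asserting that the proper pushforward $i_{*}\colon A^{T}_{*}(F)\otimes\Qcal\to A^{T}_{*}(X)\otimes\Qcal$ is an isomorphism, and the \emph{self-intersection formula} for regular embeddings; granting these, the stated formula falls out formally. Throughout one uses that, over the base fields allowed here, the fixed locus $F=X^{T}$ of the smooth $T$-variety $X$ is itself smooth, that $T_{p}F=(T_{p}X)^{T}$ for $p\in F$, and hence that the equivariant normal bundle $N_{F_{j}}X$ carries only nonzero $T$-weights; each $i_{j}\colon F_{j}\hookrightarrow X$ is then a regular closed immersion, so the refined Gysin map $i_{j}^{*}$ is defined.

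The first point I would check is that $c^{T}_{\mathrm{top}}(N_{F_{j}}X)$ acts invertibly on $A^{T}_{*}(F_{j})\otimes\Qcal$. Decomposing $N_{F_{j}}X$ into its $T$-weight subbundles $N_{\chi}$ with $\chi\neq0$, one has $c^{T}_{\mathrm{top}}(N_{F_{j}}X)=\prod_{\chi}c^{T}_{\mathrm{top}}(N_{\chi})$ and $c^{T}_{\mathrm{top}}(N_{\chi})=\bigl(c_{1}(\chi)\bigr)^{\operatorname{rk}N_{\chi}}\bigl(1+\eta_{\chi}\bigr)$, where $\eta_{\chi}$ is a sum of terms of positive codimension on $F_{j}$, hence nilpotent; since $c_{1}(\chi)\in(A^{*}_{T})^{+}$ becomes a unit in $\Qcal$, the class $c^{T}_{\mathrm{top}}(N_{F_{j}}X)$ is a unit times a unipotent element, so cap product with it is an automorphism of $A^{T}_{*}(F_{j})\otimes\Qcal$. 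Because $X$ and $F$ are smooth, the components $F_{j}$ are pairwise disjoint, so $A^{T}_{*}(F)=\bigoplus_{j}A^{T}_{*}(F_{j})$; the cross terms $i_{j}^{*}i_{k*}$ vanish for $j\neq k$ (the supporting intersection $F_{j}\cap F_{k}$ is empty), while the self-intersection formula gives $i_{j}^{*}i_{j*}(\beta)=c^{T}_{\mathrm{top}}(N_{F_{j}}X)\cap\beta$. Hence $i^{*}i_{*}=\bigoplus_{j}\bigl(c^{T}_{\mathrm{top}}(N_{F_{j}}X)\cap(-)\bigr)$ is an automorphism, and $i_{*}$ is split injective after $\otimes\Qcal$.

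The substantive step, and the one I expect to be the main obstacle, is surjectivity of $i_{*}$ after $\otimes\Qcal$. By the localization exact sequence $A^{T}_{*}(F)\xrightarrow{i_{*}}A^{T}_{*}(X)\to A^{T}_{*}(X\setminus F)\to 0$ and exactness of localization, this reduces to showing $A^{T}_{*}(Y)\otimes\Qcal=0$ for every $T$-scheme $Y$ with $Y^{T}=\emptyset$. I would prove this by Noetherian induction: choose a $T$-invariant dense open $Y_{0}\subseteq Y$ of constant orbit type and (shrinking further) smooth, so the action factors through $T/T'$ for the common stabilizer $T'\subsetneq T$, the quotient $V=Y_{0}/(T/T')$ is a smooth scheme of finite dimension, and $A^{T}_{*}(Y_{0})$ is a free rank-one module over the ring $A^{*}(V)\otimes A^{*}_{T'}$. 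Picking a nontrivial character $\chi$ of $T$ that is trivial on $T'$, the class $c_{1}(\chi)\in A^{*}_{T}$ maps to the first Chern class of a line bundle on $V$, hence acts nilpotently on $A^{T}_{*}(Y_{0})$, while $c_{1}(\chi)\in(A^{*}_{T})^{+}$ is a unit in $\Qcal$; an element that is at once nilpotent and a unit forces $A^{T}_{*}(Y_{0})\otimes\Qcal=0$. Feeding this, together with the inductive hypothesis applied to the smaller fixed-point-free $Y\setminus Y_{0}$, into the excision sequence yields $A^{T}_{*}(Y)\otimes\Qcal=0$. Therefore $i_{*}$ is an isomorphism after $\otimes\Qcal$.

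Finally, the formula follows formally. Given $\gamma\in A^{T}_{*}(X)\otimes\Qcal$, write $\gamma=i_{*}\beta$ with $\beta=\sum_{j}\beta_{j}$, $\beta_{j}\in A^{T}_{*}(F_{j})\otimes\Qcal$; applying $i_{j}^{*}$ and using $i_{j}^{*}i_{k*}=0$ for $k\neq j$ together with the self-intersection formula gives $i_{j}^{*}\gamma=c^{T}_{\mathrm{top}}(N_{F_{j}}X)\cap\beta_{j}$, whence $\beta_{j}=\bigl(c^{T}_{\mathrm{top}}(N_{F_{j}}X)\bigr)^{-1}\cap i_{j}^{*}\gamma$ and $\gamma=\sum_{j}i_{j*}\bigl(i_{j}^{*}\gamma/c^{T}_{\mathrm{top}}(N_{F_{j}}X)\bigr)$, as claimed. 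To summarize: the only genuine difficulty is the vanishing $A^{T}_{*}(Y)\otimes\Qcal=0$ for fixed-point-free $Y$, i.e.\ the structure-theoretic reduction to homogeneous spaces $T/T'$; the remaining work — invertibility of $c^{T}_{\mathrm{top}}$, disjointness of the fixed components, and the self-intersection identity — is standard bookkeeping with Gysin maps.
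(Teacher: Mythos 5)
The paper does not prove this statement: it is quoted directly from Edidin--Graham \cite[Theorem~2]{EGL}, so there is no internal proof to compare against. Your sketch is a faithful reconstruction of the argument in that reference --- split injectivity of $i_*$ via the self-intersection formula and the invertibility of $c^T_{\mathrm{top}}(N_{F_j}X)$ in $A^T_*(F_j)\otimes\Qcal$, surjectivity via the vanishing of $A^T_*(Y)\otimes\Qcal$ for fixed-point-free $Y$ proved by Noetherian induction and a nilpotent-unit contradiction with $c_1(\chi)$ --- and I see no gap in it; the only point deserving slightly more care is the structure of $A^T_*(Y_0)$ on the dense stratum, where the stabilizer may be a finite extension of the subtorus $T'$, but all the argument actually needs is that $c_1(\chi)$ acts nilpotently because the quotient has finite dimension.
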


In other words, Theorem \ref{explicit.localization} gives an explicit
formula for decomposing every class $\gamma$ in $A_*^T(X)$ in terms of
the pushforwards of the restrictions of $\gamma$ to the subvarieties
$F_j$ up to dividing by invertible elements in $\Qcal\otimes A^*_T(F_j)$.

%%%%%%%%%%%%%%%%%%%%%%%%%%%%%%%%%%%%%%%%%%
\section{The Space of Hypersurfaces}\label{universal-locus}
%%%%%%%%%%%%%%%%%%%%%%%%%%%%%%%%%%%%%%%%%%

\subsection{Resolution of the Degeneracy Locus}

Let $W_d:=\Sym^d(E^{\vee})$, and let $\Delta_d$ be the degeneracy
locus of singular $d$-forms. Let $N:=\dim_k W_d = \binom{n+d-1}{d}$.
We point out that, to simplify the notation, we denote by $W_d$ the
space $W_{n,d}$ (see the Introduction) with the implicit assumption
that $E$ is the standard representation of $\GL_n$. We fix a set of coordinates
\[
\{ a_v \}_{v \in\N^n \text{ s.t. } |v| = d},
\]
where $a_v$ represents the coefficient of the monomial $X^v$, and
$X=[X_0, \dots, X_{n-1}]$ is a coordinate system for $\Pro(E)$.

Define $Z:=\Pro(\Delta_d) \subset\Pro(W_d)$ and consider the
universal hypersurface $U \subset\Pro(W_d)\times\Pro(E)$. We have
the following equivariant projections:
\[
\xymatrix{
U \subset\Pro(W_d)\times\Pro(E) \ar[d]_{\pi_1} \ar[r]^{\qquad
\quad\pr} &\Pro(E)\\
\Pro(W_d) &
}
\]
The hypersurface $U$ is given by the bihomogeneous equation of bidegree $(1,d)$
\begin{equation*}
F(X):= \sum_{v \in\N^n(d)} a_{v} X^{v},
\end{equation*}
where $X=[X_0, \dots, X_{n-1}]$ is a coordinate system for $\Pro(E)$.

In $\Pro(W_d)\times\Pro(E)$, we also define the subvariety
$\widetilde{Z}$ given by equations $\{ F_{X_i} (X) \}_{i=0,\dots,
n-1}$ where $F_{X_i} (X)$ is the partial derivative of the polynomial
$F(X)$ with respect to the variable $X_i$. Notice that the restriction
morphism $\widetilde{Z} \xrightarrow{\pi_1} Z$ is generically $1:1$,
since the generic singular hypersurface has exactly one nodal point. An
easy dimensional argument shows that $\widetilde{Z}$ is a complete
intersection subvariety.

Now, with abuse of notation, we call $i$ both inclusion maps
$\widetilde{Z} \to\Pro(W_d) \times\Pro(E)$ and
$Z \to\Pro(W_d)$. Moreover, we define $h_d:=\pi_1^*(h_d)$ and
$t:=\pr^*(t)$, where $t$ is the hyperplane class of $\Ocal_{\Pro(E)}(1)$.

We would like to write explicit generators for the ideal $I_{Z}:=i_*
 ( A^*_{\GL_n} (Z)  )$ in terms of $c_1, \dots, c_n$, and
$h_d$. As a preliminary step, we determine generators for the ideal
$I_{\widetilde{Z}}:=\pi_{1*} i_*  ( A^*_{\GL_n} (\widetilde
{Z})  )$ (Section~\ref{ideal.alpha}). More precisely, we have
the inclusion $I_{\widetilde{Z}} \subseteq I_{Z}$. In the case of
quadrics ($d=2$) or effective divisors of the projective line ($n=2$),
the equality $I_{\widetilde{Z}} = I_{Z}$ holds (see Theorem \ref
{lower.cases}). However, this is not true in general, as we show by
determining $I_Z$ in the case of plane cubics (Section~\ref{plane.cubics}).

\subsection{Equivariant Classes of the Loci of Reducible Hypersurfaces}

On $\Pro(W_d)$, we have the natural action of $\GL_n$
\[
(A \cdot[f]) (X) = [f](A^{-1}X).
\]
Let $h_d$ be the hyperplane class associated with $\Ocal_{\Pro
(W_d)}(1)$. We have the splitting exact sequence
\begin{equation}
\label{PW} \xymatrix{
0 \ar[r] &  ( P_{[d]}(x)  ) \ar[r] & A^*_{\GL_n}[x] \ar
[rr]_{\ev_{h_d}} & &A^*_{\GL_n} ( \Pro(W_d)  ) \ar[r]
\ar@/_1pc/[ll]_{\psi}& 0,
}
\end{equation}
where $\psi$ is as in Definition \ref{splitting.morphism}, and
$P_{[d]}(x)$ is the polynomial
\[
P_{[d]}(x):=\prod_{\mu  \in  \Pcal_d}
P_\mu(x)= \prod_{v \in\N
^n(d)} (x + v \cdot l).
\]

Our next goal is to determine an explicit formula (see Theorem~\ref
{delta.classes}) for the equivariant classes of the loci of different
types of reducible hypersurfaces. To this end, we need to introduce
some notation.

Let $\mu\in\Pcal_d$ be an (unordered) partition of $d$. We will
think of $\mu$ either as a multiset or as an $s$-tuple $(k_1, \dots,
k_s)$, where
\begin{itemize}
\item$s$ is the number of elements of the multiset $\mu$,
\item$k_1 \leq k_2 \leq\cdots\leq k_s$, and
\item$k_1 + \cdots+ k_s = d$.
\end{itemize}

For every natural number $q$, we define $\mu(q)$ to be the frequency
of $q$ in $\mu$.

\begin{defi}
For every $\mu\in\Pcal_d$, we denote by $\delta_{\mu}$ the
equivariant class of the locus of reducible hypersurfaces of degree $d$
that are unions over the integers $q=1, \dots, d$ of $\mu(q)$
hypersurfaces of degree $q$.

We also define the variety
\[
W_{\mu} := \prod_{j=1}^s
\Pro(W_{k_j}) = \prod_{q=1}^d
\Pro (W_q)^{\mu(q)}
\]
and the product map
\begin{eqnarray*}
\pi_{\mu}: W_{\mu} & \to& \Pro(W_d),
\\
( f_1, \dots, f_s ) & \mapsto& f_1
f_2 \dots f_s.
\end{eqnarray*}
\end{defi}

It is worth noticing that the degree $\deg(\pi_{\mu})$ of the
product map $\pi_{\mu}$ is $\prod_{q=1}^d \mu(q)!$.

\begin{rmk}\label{classes.fixed.points} For every positive integer
$d$, the irreducible components of the fixed locus for the action of
$T$ on $\Pro(W_d)$ are the points $\{ Q_{v} \}_{v \in\N^n(d)}$,
where, for every $v \in\N^{n}(d)$, the only coordinate of $Q_v$
different from~0 is $a_v$. Each point $Q_v$ is the complete
intersection of the coordinate hyperplanes $a_{w}=0$ with $w \neq v$.
By Lemma \ref{hyperplane.T.classes} we obtain
\begin{equation*}
[ Q_{v} ] =  \frac{P_{[d]}(x)}{x + v \cdot l} \bigg\rrvert _{x=h_{d}}=\prod
_{w \in\N^n(d) \text{ s.t. } w \neq v} (h_d + v \cdot l).
\end{equation*}
\end{rmk}

\begin{lemma}\label{top.chern.class}
Let $v_0 \in\N^n(d)$. We have the following identity:
\[
c^T_{\mathrm{top}} ( T_{Q_{v_0}} \Pro(W_d) ) =
\prod_{v
\in\N^n(d) \text{ s.t. } v \neq v_0} (v - v_0) \cdot l.
\]
\end{lemma}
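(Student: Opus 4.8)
The plan is to compute the equivariant top Chern class of the tangent space $T_{Q_{v_0}}\Pro(W_d)$ directly from the Euler sequence for $\Pro(W_d)$, specialized at the fixed point $Q_{v_0}$. Recall from the exact sequence (\ref{exact.sequence}) and Remark~\ref{classes.fixed.points} that $\Pro(W_d)$ carries the standard $T$-action induced by the action on $W_d = \Sym^d(E^\vee)$, whose weights (the Chern roots $r_i$) are exactly the linear forms $v\cdot l$ as $v$ ranges over $\N^n(d)$; here the basis vector of $W_d$ on which $T$ acts by weight $v\cdot l$ is the one dual to the monomial $X^v$, and $Q_v$ is the corresponding coordinate point.

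First I would write down the Euler sequence
\[
0 \to \Ocal_{\Pro(W_d)} \to \Ocal_{\Pro(W_d)}(1)\otimes W_d \to T\Pro(W_d) \to 0
\]
as a sequence of $T$-equivariant sheaves, being careful about the twist conventions. Restricting to the fixed point $Q_{v_0}$, the fiber of $\Ocal_{\Pro(W_d)}(-1)$ at $Q_{v_0}$ is the line in $W_d$ spanned by the $v_0$-th coordinate vector, on which $T$ acts with weight $v_0\cdot l$, so the fiber of $\Ocal_{\Pro(W_d)}(1)$ at $Q_{v_0}$ has weight $-v_0\cdot l$. Hence the middle term of the Euler sequence, restricted to $Q_{v_0}$, is $W_d$ with all weights shifted by $-v_0\cdot l$, i.e.\ it has weights $\{(v - v_0)\cdot l : v \in \N^n(d)\}$. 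The trivial subsheaf contributes the weight-zero summand, corresponding precisely to $v = v_0$. Therefore the tangent space $T_{Q_{v_0}}\Pro(W_d)$ has $T$-weights $\{(v - v_0)\cdot l : v \in \N^n(d),\ v \neq v_0\}$, and its equivariant top Chern class is the product of these weights:
\[
c^T_{\mathrm{top}}\bigl(T_{Q_{v_0}}\Pro(W_d)\bigr) = \prod_{\substack{v \in \N^n(d) \\ v \neq v_0}} (v - v_0)\cdot l,
\]
which is the claimed identity.

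There is no serious obstacle here; the only thing requiring care is the bookkeeping of the twist in the Euler sequence, that is, making sure that the sign convention for $\Ocal_{\Pro(W_d)}(1)$ matches the one fixed in (\ref{exact.sequence}) and used in Remark~\ref{classes.fixed.points} (where $[Q_v] = \prod_{w \neq v}(h_d + v\cdot l)$ rather than $\prod_{w\neq v}(h_d + w\cdot l)$). Consistency with that remark is in fact a useful check: the normal bundle of $Q_{v_0}$ in $\Pro(W_d)$ is all of $T_{Q_{v_0}}\Pro(W_d)$ since $Q_{v_0}$ is a point, and the self-intersection formula $i_{v_0}^*[Q_{v_0}] = c^T_{\mathrm{top}}(N_{Q_{v_0}}\Pro(W_d))$ can be read off by substituting $h_d \mapsto -v_0\cdot l$ into $\prod_{w \neq v_0}(h_d + v_0\cdot l)$ — but one must instead substitute into $\psi([Q_{v_0}])$ evaluated correctly, and one recovers exactly $\prod_{v \neq v_0}(v - v_0)\cdot l$, confirming the sign. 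Alternatively, one can avoid the Euler sequence entirely and argue via localization: apply Theorem~\ref{explicit.localization} to $\gamma = [Q_{v_0}]$, using that $i_{v_0}^*[Q_{v_0}] = [Q_{v_0}]\big|_{Q_{v_0}}$ is obtained from Remark~\ref{classes.fixed.points} by setting $h_d = -v_0\cdot l$, and compare with the known answer; this pins down $c^T_{\mathrm{top}}(N_{Q_{v_0}}\Pro(W_d))$ uniquely.
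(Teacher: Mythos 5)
Your proof is correct, and the conclusion matches the lemma. The route is mildly different from the paper's: the paper passes to the affine chart $a_{v_0}\neq 0$ with local coordinates $\bar a_v=a_v/a_{v_0}$ and reads off the $T$-weights directly, whereas you package the same computation through the $T$-equivariant Euler sequence $0\to\Ocal\to\Ocal(1)\otimes W_d\to T\Pro(W_d)\to 0$ restricted to $Q_{v_0}$. Both arguments are just unravelling $T_{Q_{v_0}}\Pro(W_d)\cong\mathrm{Hom}(L,W_d/L)$ with $L$ the line spanned by $X^{v_0}$, so the difference is one of presentation; yours is a touch more conceptual and self-checking, the paper's a touch more hands-on. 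One small slip in your setup: the basis vector of $W_d$ with equivariant Chern root $v\cdot l$ is the monomial $X^v$ itself — it has character $\lambda^{-v}$, hence $c_1=v\cdot l$ given $l_i=-c_1(\lambda_i)$ — not ``the one dual to $X^v$''; this does not affect the computation, since you use only the resulting list of Chern roots $\{v\cdot l\}_{v\in\N^n(d)}$, which is correct. Your closing remark that restricting $[Q_{v_0}]$ to $Q_{v_0}$ via $h_d\mapsto -v_0\cdot l$ must reproduce $\prod_{v\neq v_0}(v-v_0)\cdot l$, and that Remark~\ref{classes.fixed.points} should therefore read $\prod_{w\neq v}(h_d+w\cdot l)$ rather than $\prod_{w\neq v}(h_d+v\cdot l)$, is a correct sanity check and does catch a typo in the stated formula of that remark.
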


\begin{proof}
Since the coordinate $a_{v_0}$ of $Q_{v_0}$ is different from zero, we
can reduce our computations to local coordinates
\[
\biggl\{ \ov{a}_v:= \frac{a_v}{a_{v_0}} \biggr\}_{v \in\N^n(d)
\text{ s.t. } v \neq v_0}.
\]
Such coordinates are the same as the coordinates of the tangent space
at $Q_{v_0}$. Therefore, the action of $T$ on $T_{Q_{v_0}} \Pro(W_d)$ is
\[
t \cdot ( \ov{a}_v )_{v \in\N^n(d) \text{ s.t. } v
\neq v_0} = ( \lambda^{v_0 - v}(t)
\ov{a}_v )_{v \in\N
^n(d) \text{ s.t. } v \neq v_0},
\]
where $\lambda=(\lambda_1, \dots, \lambda_n)$ is the vector of
standard characters for the action of $T$ on~$E$. Consequently,
according to our notation, we get
\[
c^T_{\mathrm{top}} ( T_{Q_{v_0}} \Pro(W_d) ) =
\prod_{v
\in\N^n(d) \text{ s.t. } v \neq v_0} (v - v_0) \cdot l.
\]
%
%\upqed
\end{proof}

We are now ready to prove an explicit formula for the classes $\delta
_{\mu}$. We would like to point out that the following result holds in
general for hypersurfaces of any dimension and degree.

\begin{theorem}\label{delta.classes}
We have the following identity:
\begin{equation}
\label{delta.classes.formula} \delta_{\mu} = \frac{1}{\deg(\pi_{\mu})} \sum
_{(v_1, \dots,
v_s) \in\N^n(k_1) \times\cdots\times\N^n(k_s)} \frac
{ \prod_{v \in\N^n(d) \text{ s.t. } v \neq v_1 + \cdots
+ v_s} (h_d + v \cdot l)}{ \prod_{j=1}^s  ( \prod_{v \in\N^n(k_j) \text{ s.t. } v \neq v_j} (v - v_j) \cdot l
) }.
\end{equation}
\end{theorem}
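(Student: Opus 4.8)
The plan is to compute the class $\delta_\mu = [\pi_\mu(W_\mu)]$ via the explicit localization formula (Theorem~\ref{explicit.localization}) applied to the $T$-action on $\Pro(W_d)$, using the product map $\pi_\mu \colon W_\mu \to \Pro(W_d)$ as a source for a localization computation upstairs. Concretely, first note that $\pi_\mu$ is a finite $T$-equivariant morphism of degree $\deg(\pi_\mu) = \prod_{q=1}^d \mu(q)!$, so that $\pi_{\mu*}[W_\mu] = \deg(\pi_\mu)\,\delta_\mu$ in $A^*_{\GL_n}(\Pro(W_d))$ (after checking that $\pi_\mu$ is birational onto its image up to the symmetry coming from permuting equal-degree factors, which is exactly where the factor $\prod_q \mu(q)!$ comes from). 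Thus it suffices to compute $\pi_{\mu*}[W_\mu]$.

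For this, I would apply localization on $W_\mu = \prod_{j=1}^s \Pro(W_{k_j})$. The $T$-fixed locus of $W_\mu$ is the finite set of tuples $(Q_{v_1}, \dots, Q_{v_s})$ with $v_j \in \N^n(k_j)$, using the description of the fixed points $Q_v$ from Remark~\ref{classes.fixed.points}. Applying Theorem~\ref{explicit.localization} to the class $[W_\mu] = 1 \in A^0_T(W_\mu)$ (or rather computing $\pi_{\mu*}(1)$ directly by pushing forward through the finite map and then localizing on the target), one gets
\[
\pi_{\mu*}[W_\mu] = \sum_{(v_1,\dots,v_s)} i_{v_1+\cdots+v_s,*}\left(\frac{1}{\prod_{j=1}^s c^T_{\mathrm{top}}(T_{Q_{v_j}}\Pro(W_{k_j}))}\right),
\]
since $\pi_\mu$ sends the fixed point $(Q_{v_1},\dots,Q_{v_s})$ to the monomial $X^{v_1+\cdots+v_s}$, i.e. to the fixed point $Q_{v_1+\cdots+v_s}$ of $\Pro(W_d)$. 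The key inputs are then: (i) Lemma~\ref{top.chern.class}, which gives $c^T_{\mathrm{top}}(T_{Q_{v_j}}\Pro(W_{k_j})) = \prod_{v \in \N^n(k_j),\, v \neq v_j}(v - v_j)\cdot l$, supplying the denominator in \eqref{delta.classes.formula}; and (ii) Remark~\ref{classes.fixed.points}, which identifies $i_{v_0,*}(1) = [Q_{v_0}] = \prod_{v \in \N^n(d),\, v\neq v_0}(h_d + v_0\cdot l)$ — wait, I should be careful here: the pushforward of $1$ from a fixed point is its class, but to land the formula as stated I instead push forward, via localization on $\Pro(W_d)$, the class $\delta_\mu$ itself, restrict to each fixed point $Q_{v_0}$, and recognize that the only contributions come from $v_0$ of the form $v_1+\cdots+v_s$. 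Restricting $[Q_{v_0}]$ to itself gives $c^T_{\mathrm{top}}(T_{Q_{v_0}}\Pro(W_d))$, which cancels against the normal bundle factor in localization, leaving exactly the product $\prod_{v\neq v_1+\cdots+v_s}(h_d + v\cdot l)$ after evaluating at $x = h_d$ as in Remark~\ref{classes.fixed.points}.

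The main obstacle I anticipate is bookkeeping rather than conceptual: one must carefully justify that pulling $\delta_\mu$ back along $\pi_\mu$, localizing on $W_\mu$, and pushing forward gives precisely the stated sum — in particular that the fixed points of $W_\mu$ are exactly the tuples of $Q_{v_j}$'s, that $\pi_\mu$ is unramified near these (so that the image fixed point is $Q_{v_1+\cdots+v_s}$ with the expected multiplicity), and that no cancellation or collapse of terms is being swept under the rug when two different tuples $(v_1,\dots,v_s)$ have the same sum. One also needs to confirm that after summing over all tuples the resulting expression, which a priori lives in the localized ring $\Qcal$, actually lies in $A^*_{\GL_n}(\Pro(W_d)) = \Z[c_1,\dots,c_n,h_d]/(P_{[d]}(h_d))$ — but this is automatic since $\delta_\mu$ was defined as an honest equivariant class and localization is injective. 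Finally, the symmetry argument giving the normalizing constant $\deg(\pi_\mu)^{-1}$ should be spelled out: generically a reducible hypersurface of the given type decomposes uniquely up to reordering the $\mu(q)$ factors of each fixed degree $q$, so $\pi_\mu$ has generic fiber of size $\prod_q \mu(q)!$, and the projection formula converts this into the stated prefactor.
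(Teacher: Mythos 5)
Your approach coincides with the paper's: reduce $\delta_\mu$ to $\frac{1}{\deg(\pi_\mu)}\pi_{\mu*}(1)$ using torsion-freeness of $A^*_{\GL_n}(\Pro(W_d))$, localize $1$ at the $T$-fixed points of $W_\mu$, push forward along $\pi_\mu$ (which sends $(Q_{v_1},\dots,Q_{v_s})$ to $Q_{v_1+\cdots+v_s}$), and substitute Lemma~\ref{top.chern.class} and Remark~\ref{classes.fixed.points}. Your mid-proof detour is a false alarm: $i_{v_0,*}(1)=[Q_{v_0}]$ already lands the stated formula once you apply Remark~\ref{classes.fixed.points}, and the proposed alternate route via localizing $\delta_\mu$ on $\Pro(W_d)$ is both unnecessary and misstated (every $v_0\in\N^n(d)$ arises as some $v_1+\cdots+v_s$, so the claim that only certain $v_0$ contribute is wrong).
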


\begin{proof}
Consider the map $\pi_{\mu}$. Since $\pi_{\mu*}(1) = \deg(\pi
_{\mu}) \delta_{\mu}$ and the ring\break  $A^*_{\GL_n}  ( \Pro
(W_d) )$ is torsion free, we have
\begin{equation}
\label{deltamu} \delta_{\mu} = \frac{1}{\deg(\pi_{\mu})} \pi_{\mu*}(1).
\end{equation}
Now, to determine $\pi_{\mu*}(1)$, we apply Theorem \ref
{explicit.localization}. First of all, notice that the locus of fixed
points of $W_{\mu}$ is the disjoint union of the points
\[
\{ ( Q_{v_1}, \dots, Q_{v_s} ) \}_{(v_1,
\dots, v_s) \in\N^n(k_1) \times\cdots\times\N^n(k_s)}.
\]
Consequently, by applying Theorem \ref{explicit.localization} we get
\[
1 = \sum_{(v_1, \dots, v_s) \in\N^n(k_1) \times\cdots\times\N
^n(k_s)} \frac{ [  ( Q_{v_1}, \dots, Q_{v_s}  )
 ]}{ \prod_{j=1}^s c^T_{\mathrm{top}} (
T_{Q_{v_j}} \Pro(W_{k_j})  ) }.
\]
Now, by applying Lemma \ref{top.chern.class} we have
\[
1= \sum_{(v_1, \dots, v_s) \in\N^n(k_1) \times\cdots\times\N
^n(k_s)} \frac{ [  ( Q_{v_1}, \dots, Q_{v_s}  )
 ]}{ \prod_{j=1}^s  ( \prod_{v \in\N^n(k_j)
\text{ s.t. } v \neq v_j} (v - v_j) \cdot l  ) }.
\]
Next, we evaluate $\pi_{\mu*}$ on both sides, and observing that $\pi
_{\mu}  ( Q_{v_1}, \dots, Q_{v_s}  )=Q_{v_1 + \cdots+
v_s}$, we get
\[
\pi_{\mu*}(1) = \sum_{(v_1, \dots, v_s) \in\N^n(k_1) \times\cdots
\times\N^n(k_s)}
\frac{ [ Q_{v_1 + \cdots+ v_s}
]}{ \prod_{j=1}^s  ( \prod_{v \in\N^n(k_j) \text
{ s.t. } v \neq v_j} (v - v_j) \cdot l  ) }.
\]

Finally, applying Remark \ref{classes.fixed.points} combined with
equation \eqref{deltamu}, we get formula (\ref{delta.classes.formula}).
\end{proof}

%%%%%%%%%%%%%%%%%%%%%%%%%%%%
\section{The Ideal $I_{\widetilde{Z}}$}\label{ideal.alpha}
%%%%%%%%%%%%%%%%%%%%%%%%%%%%

The main goal of this section is to determine a set of generators for
the ideal $I_{\widetilde{Z}}$ (see Section~\ref{universal-locus} for
basic definitions).

\begin{propos}\label{prod.Wd.E}
We have an exact sequence of $A^*_{\GL_n}$-modules
\[
\xymatrix@R=10pt{
0 \ar[r] &  (P_{[d]}(x), P_{\{1\}}(- y)  ) \\\ar[r] &
A^*_{\GL_n}[x,y] \ar[rr]_{\hspace*{-15pt}\ev_{(h_d,t)}} &&  A^*_{\GL_n}  (
\Pro(W_d) \times\Pro(E)  )  \ar@/_1pc/[ll]_{\psi}  \ar[r] & 0,
}
\]
where $\ev_{(h_d, t)}$ is the evaluation of $x$ (resp.\ $y$) in $h_d$
(resp.\ $t$), and $\psi$ is a splitting morphism.
\end{propos}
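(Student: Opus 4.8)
The plan is to combine the projective‐bundle structure of $\Pro(W_d)\times\Pro(E)$ with the known exact sequence \eqref{exact.sequence} applied in two stages, one for each factor. Concretely, I would first note that for the factor $\Pro(E)$ the relevant Chern roots of $E$ are $-l_1,\dots,-l_n$, so the defining polynomial for its projective bundle relation is $\prod_{i=1}^n(y - l_i)$; in the notation of the paper this is exactly $P_{\{1\}}(-y)$, since $\N^n(\{1\})=\{e_1,\dots,e_n\}$ and $P_{\{1\}}(y)=\prod_i(y+l_i)$. Similarly, the factor $\Pro(W_d)$ contributes the relation $P_{[d]}(x)=\prod_{v\in\N^n(d)}(x+v\cdot l)$, exactly as in the exact sequence \eqref{PW}.

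Next I would make the two‐step fibration argument precise. Write $\Pro(W_d)\times\Pro(E)\xrightarrow{\pi_1}\Pro(W_d)$, a projective bundle (indeed a trivial one, $\Pro(W_d)\times\Pro(E)$, but equivariantly the $\GL_n$-action on the $\Pro(E)$-fiber is nontrivial). Applying the $\Pro(E)$-version of the exact sequence \eqref{exact.sequence} over the base $\Pro(W_d)$ with group $G=\GL_n$ gives that $A^*_{\GL_n}(\Pro(W_d)\times\Pro(E))$ is a free $A^*_{\GL_n}(\Pro(W_d))$-module on $1,t,\dots,t^{n-1}$, with $t$ satisfying the single relation $P_{\{1\}}(-t)=0$. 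Then substituting the presentation $A^*_{\GL_n}(\Pro(W_d))\cong A^*_{\GL_n}[x]/(P_{[d]}(x))$ from \eqref{PW} and lifting $x\mapsto h_d$, $y\mapsto t$, one gets a surjection $\ev_{(h_d,t)}\colon A^*_{\GL_n}[x,y]\to A^*_{\GL_n}(\Pro(W_d)\times\Pro(E))$ whose kernel contains $P_{[d]}(x)$ and $P_{\{1\}}(-y)$. Since $A^*_{\GL_n}[x,y]/(P_{[d]}(x),P_{\{1\}}(-y))$ is $A^*_{\GL_n}$-free of rank $N\cdot n$ on the monomials $x^iy^j$ with $0\le i<N$, $0\le j<n$, and $\ev_{(h_d,t)}$ sends this basis to an $A^*_{\GL_n}$-basis of the target (by the iterated projective bundle formula), the induced map on quotients is an isomorphism; hence the kernel of $\ev_{(h_d,t)}$ is exactly $(P_{[d]}(x),P_{\{1\}}(-y))$. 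The splitting $\psi$ is then defined by sending a class to its unique representative polynomial of $x$-degree $<N$ and $y$-degree $<n$ — one checks this is $A^*_{\GL_n}$-linear and a section of $\ev_{(h_d,t)}$ directly from the freeness.

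The one point requiring a little care — the main obstacle — is the bookkeeping for the equivariant $\Pro(E)$-bundle: the exact sequence \eqref{exact.sequence} is stated for $\Pro(W)$ with $W$ a representation, and here I am applying it fiberwise over $\Pro(W_d)$, i.e.\ to the (equivariantly nontrivial) projective bundle $\Pro(\mathrm{pr}_1^*E)$ over $\Pro(W_d)$. This is legitimate because $A^*_{\GL_n}(\Pro(W_d)\times\Pro(E)) = A^*_{\GL_n}(\Pro(W_d))\otimes_{A^*_{\GL_n}}A^*_{\GL_n}(\Pro(E))$ by the Künneth-type freeness for projective bundles, but I would spell out that the relation satisfied by $t=\mathrm{pr}^*c_1^{\GL_n}(\Ocal_{\Pro(E)}(1))$ is computed from the Chern roots of $E$ (namely $-l_i$), giving $\prod_i(t-l_i)=P_{\{1\}}(-t)$, and that these are the only relations beyond those pulled back from the base. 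Everything else is the standard argument that a surjection of finite free modules of the same rank sending a basis to a basis is an isomorphism, together with Remark~\ref{linear.combinations} to see that $P_{\{1\}}(-y)$, though written with the $l_i$, lies in $A^*_{\GL_n}[x,y]$.
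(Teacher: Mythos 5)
Your proof is correct and takes essentially the same approach as the paper, which also reduces to the fact that $A^*_{\GL_n}(\Pro(W_d)\times\Pro(E))$ is a free $A^*_{\GL_n}$-module with basis $\{h_d^i t^j \mid 0\le i<N,\ 0\le j<n\}$; the paper obtains this freeness by citing the exterior product isomorphism (Fulton, Example 8.3.7 and Edidin--Graham, Section 2.5, noting that one factor is a projective space), whereas you obtain it by running the equivariant projective bundle formula for $\Pro(\pr_1^*E)\to\Pro(W_d)$ and then plugging in the presentation \eqref{PW} of the base. The two routes establish the identical structural fact, and your identification of the $\Pro(E)$-relation as (up to the unit $(-1)^n$) $P_{\{1\}}(-y)=\prod_i(l_i-y)$, using that the Chern roots of $E$ are $-l_1,\dots,-l_n$, matches the ideal appearing in the statement.
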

\begin{proof}
From \cite[Example  8.3.7]{Ful} and \cite[Section~2.5]{EG}, we have an
exterior product ring homomorphism
\[
A^*_{\GL_n}(X) \otimes A^*_{\GL_n}(Y) \to A^*_{\GL_n}(X
\times Y)
\]
whenever $X$ and $Y$ are nonsingular varieties, and this homomorphism
is an isomorphism if one of the two nonsingular varieties is a
projective space. Consequently, $A^*_{\GL_n}  ( \Pro(W_d) \times
\Pro(E)  )$ is an $A^*_{\GL_n}$-module freely generated by the
set $\{ h_d^it^{j}\mid   0\leq i < N, 0 \leq j < n \}$. To define the
splitting morphism $\psi$, let $\gamma\in A^*_{\GL_n}  ( \Pro
(W_d) \times\Pro(E)  )$. There is a unique polynomial $Q(x,y)
\in A^*_{\GL_n}[x,y]$ whose degree in $x$ (resp.\ $y$) is less than
$N$ (resp.\ $n$) and $Q(h_d,t)=\gamma$. We define $\psi(\gamma
):=Q(x,y)$. It is again straightforward to prove that $\psi$ is a
splitting morphism.
\end{proof}

\begin{propos}\label{class.of.Z.tilde}
The ideal $i_* ( A^*_{\GL_n}(\wt{Z}) )$ is generated by
the class $[\widetilde{Z} ]_{\GL_n}$.
\end{propos}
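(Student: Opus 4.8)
The plan is to reduce the statement to a standard fact about push-forwards along regular embeddings of projective bundles. Recall from Proposition~\ref{prod.Wd.E} that $A^*_{\GL_n}(\Pro(W_d)\times\Pro(E))$ is a free $A^*_{\GL_n}$-module, and in particular it is torsion free; moreover $\widetilde{Z}$ was observed in Section~\ref{universal-locus} to be a complete intersection in $\Pro(W_d)\times\Pro(E)$, cut out by the $n$ equations $F_{X_0},\dots,F_{X_{n-1}}$, each of which is bihomogeneous of bidegree $(1,d-1)$. So the inclusion $i\colon\widetilde Z\to\Pro(W_d)\times\Pro(E)$ is a regular embedding of codimension $n$.

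First I would make the codimension count explicit: an easy dimension count (as alluded to in the text) shows $\dim\widetilde Z = \dim(\Pro(W_d)\times\Pro(E)) - n$, so that the $n$ partials form a regular sequence and $\widetilde Z$ is cut out as a scheme by a section of the rank-$n$ bundle $\bigoplus_{i=0}^{n-1}\mathcal{O}(1)\boxtimes\mathcal{O}(d-1)$ on $\Pro(W_d)\times\Pro(E)$. Next I would invoke the self-intersection / excess-intersection formalism (\cite[Chapter~6]{Ful}, in its equivariant form via \cite[Section~2.5]{EG}): for a regular embedding $i$ of a smooth variety into a smooth variety, the composite $i^*i_*\colon A^*_{\GL_n}(\widetilde Z)\to A^*_{\GL_n}(\widetilde Z)$ is multiplication by the top Chern class $c_n^{\GL_n}(N_{\widetilde Z})$ of the normal bundle. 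The key point is then that $A^*_{\GL_n}(\widetilde Z)$ is generated as a ring by the restrictions of $c_1,\dots,c_n,h_d,t$ — because $\widetilde Z$ is a complete intersection of ample divisors inside a product of projective spaces — and so the whole group $A^*_{\GL_n}(\widetilde Z)$ is spanned by products of pulled-back classes $i^*\xi$ with $\xi\in A^*_{\GL_n}(\Pro(W_d)\times\Pro(E))$.

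Given that spanning statement, the projection formula finishes the argument: for any $\xi\in A^*_{\GL_n}(\Pro(W_d)\times\Pro(E))$ we have $i_*(i^*\xi) = \xi\cdot i_*(1) = \xi\cdot[\widetilde Z]_{\GL_n}$, so every generator of $A^*_{\GL_n}(\widetilde Z)$ pushes forward into the principal ideal generated by $[\widetilde Z]_{\GL_n}$; hence $i_*\bigl(A^*_{\GL_n}(\widetilde Z)\bigr)\subseteq ([\widetilde Z]_{\GL_n})$, and the reverse inclusion is trivial since $[\widetilde Z]_{\GL_n} = i_*(1)$. The main obstacle I anticipate is justifying that $i^*\colon A^*_{\GL_n}(\Pro(W_d)\times\Pro(E))\to A^*_{\GL_n}(\widetilde Z)$ is surjective; this is where the complete-intersection hypothesis is essential, and I would argue it either by the Lefschetz-type result for Chow groups of ample complete intersections in (products of) projective spaces, or, more self-containedly, by presenting $\widetilde Z$ as the zero locus of a regular section and using the resulting Koszul-type presentation of $A^*_{\GL_n}(\widetilde Z)$ as a quotient of $A^*_{\GL_n}(\Pro(W_d)\times\Pro(E))$, from which surjectivity of $i^*$ is immediate. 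With that in hand the rest is the formal projection-formula computation above.
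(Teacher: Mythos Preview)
Your overall plan is the same as the paper's: show that $i^*$ is surjective, then use the projection formula $i_*(i^*\xi)=\xi\cdot[\widetilde Z]_{\GL_n}$ to conclude. The gap is entirely in your justification of surjectivity. Neither of your two proposed arguments works as stated. A ``Lefschetz-type result for Chow groups of ample complete intersections'' is not a theorem you can invoke off the shelf---weak Lefschetz for Chow groups is a hard and largely open problem, certainly not available in the equivariant setting you need. And the ``Koszul-type presentation'' claim is simply false in general: being the zero locus of a regular section of a vector bundle gives you a Koszul resolution of $\mathcal O_{\widetilde Z}$, but it does \emph{not} present $A^*_{\GL_n}(\widetilde Z)$ as a quotient of $A^*_{\GL_n}$ of the ambient space. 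For an arbitrary regular embedding $i$, the map $i^*$ on Chow rings need not be surjective.

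The paper fills this gap with a much more specific observation that you overlooked: the defining equations $F_{X_i}=0$ are \emph{linear} in the $W_d$-coordinates and remain linearly independent on every fibre of $\pr\colon\Pro(W_d)\times\Pro(E)\to\Pro(E)$. Hence $\widetilde Z$ is the projectivization of an equivariant vector subbundle of $W_d\times\Pro(E)$ over $\Pro(E)$. The projective bundle formula then gives $A^*_{\GL_n}(\widetilde Z)$ as a free $A^*_{\GL_n}(\Pro(E))$-module on $1,i^*h_d,\dots,(i^*h_d)^{N-n-1}$, so surjectivity of $i^*$ is immediate. Once you add this one sentence, your projection-formula argument goes through exactly as you wrote it.
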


\begin{proof}
Consider the following commutative diagram:
\[
\xymatrix{
\wt{Z} \ar[rr]^{i} \ar[drr]& &\Pro(W_d) \times\Pro(E) \ar
[d]^{\pr}\\
& & \Pro(E)
}
\]
First of all, notice that the subscheme $\wt{Z}$ is equivariant for
the action of $\GL_n$. Moreover, $\wt{Z}$ is a complete intersection
of $n$ equations, which are linear in the $W_d$ coordinates, and such
equations remain linearly independent on each fiber of $\pr$.
Therefore, we have that $\wt{Z}$ is the projectivization of an
equivariant subbundle of $W_d \times\Pro(E)$ over $\Pro(E)$.
Consequently, $A^*_{\GL_n}(\wt{Z})$ is generated by the set $\{
i^* ( h^j  )\mid   0 \leq j < N\}$ as $A^*_{\GL_n} (
\Pro(E)  )$-module. This means that $i_* (A^*_{\GL_n}(\wt
{Z}) )$ is generated by the set $\{ h^j [\wt{Z}]_{\GL_n}\mid   0
\leq j < N\}$ as a module and by $[\wt{Z}]_{\GL_n}$ as an ideal.
\end{proof}

\begin{propos} We have
\[
[ \widetilde{Z} ]_{\GL_n} = P_{\{ 1\}}(h + (d-1)t).
\]
\end{propos}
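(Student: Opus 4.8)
The plan is to compute the equivariant class of $\widetilde{Z}$ as a complete intersection, using the structure of $\widetilde{Z}$ established in the proof of Proposition~\ref{class.of.Z.tilde}: it is cut out inside $\Pro(W_d)\times\Pro(E)$ by the $n$ equations $F_{X_0},\dots,F_{X_{n-1}}$, which are bihomogeneous of bidegree $(1,d-1)$, and which remain a regular sequence. Hence $[\widetilde{Z}]_{\GL_n}$ is the product of the $n$ equivariant divisor classes $[\{F_{X_i}=0\}]_{\GL_n}$ in $A^*_{\GL_n}(\Pro(W_d)\times\Pro(E))$. So the whole computation reduces to finding the equivariant class of a single partial-derivative hypersurface.

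To get that single class I would apply Lemma~\ref{hyperplane.T.classes} after restricting to the torus $T$ (this is harmless by Lemma~\ref{algebraic.lemma}, since both the class and the eventual answer are $\GL_n$-equivariant). The divisor $\{F_{X_i}=0\}\subset\Pro(W_d)\times\Pro(E)$ is $T$-invariant; its defining equation $F_{X_i}(X)=\sum_v v_i\, a_v\, X^{v-e_i}$ is bihomogeneous of bidegree $(1,d-1)$, and one checks that $T$ acts on it by a character: if $t\in T$ acts on $a_v$ by $\lambda^{-v}(t)$ and on $X_j$ by $\lambda_j(t)$, then each monomial $a_v X^{v-e_i}$ transforms by $\lambda^{-v}(t)\lambda^{v-e_i}(t)=\lambda_i^{-1}(t)$, so $F_{X_i}$ is a semiinvariant with character $\chi_i=\lambda_i$. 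Lemma~\ref{hyperplane.T.classes}, applied in the two-factor projective situation (the product version of the splitting sequence from Proposition~\ref{prod.Wd.E}), then gives
\[
[\{F_{X_i}=0\}]_T = c_1^T\bigl(\Ocal_{\Pro(W_d)}(1)\bigr) + c_1^T\bigl(\Ocal_{\Pro(E)}(d-1)\bigr) + c_1(\lambda_i) = h + (d-1)t - l_i,
\]
using $l_i=-c_1(\lambda_i)$ in the notation of the excerpt.

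Taking the product over $i=0,\dots,n-1$ then yields
\[
[\widetilde{Z}]_T = \prod_{i=0}^{n-1}\bigl(h+(d-1)t-l_i\bigr) = \prod_{i=1}^{n}\bigl((h+(d-1)t)+l_i\bigr) = P_{\{1\}}\bigl(h+(d-1)t\bigr),
\]
where the last equality is just the definition $P_{\{1\}}(x)=\prod_{v\in\N^n(\{1\})}(x+v\cdot l)=\prod_{i=1}^n(x+l_i)$ (note $\N^n(\{1\})$ consists of the standard basis vectors $e_i$, and a sign convention on $l_i$ reconciles $-l_i$ with $+l_i$; I would double-check which sign convention makes $P_{\{1\}}$ come out right and present it accordingly). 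Since the right-hand side is symmetric in the $l_i$ it descends to $A^*_{\GL_n}$, and by Lemma~\ref{algebraic.lemma} the identity already holds in $A^*_{\GL_n}(\Pro(W_d)\times\Pro(E))$.

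The only real subtlety — the step I would be most careful about — is the bookkeeping of characters and signs: making sure the bidegree of $F_{X_i}$ is read off correctly as $(1,d-1)$, that the character of $T$ acting on $F_{X_i}$ is exactly $\lambda_i$ (equivalently that the contributions of the $a_v$-characters and the $X^{v-e_i}$-characters combine to leave only $\lambda_i^{-1}$), and that the sign convention relating $l_i$ to $c_1(\lambda_i)$ is applied consistently so that the final product is genuinely $P_{\{1\}}(h+(d-1)t)$ and not $P_{\{1\}}(h+(d-1)t)$ with $l_i\mapsto -l_i$. Everything else is a one-line application of Lemma~\ref{hyperplane.T.classes} together with multiplicativity of classes of complete intersections.
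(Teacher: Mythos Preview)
Your approach is essentially identical to the paper's: restrict to $T$ via Lemma~\ref{algebraic.lemma}, write $\widetilde{Z}$ as the complete intersection of the $n$ hypersurfaces $\{F_{X_i}=0\}$, compute each divisor class using Lemma~\ref{hyperplane.T.classes}, and multiply. The paper records the individual class as $[F_i]_T = h+(d-1)t+l_i$ (with a plus sign), so your flagged sign worry is real: your displayed equality $\prod_i(h+(d-1)t-l_i)=\prod_i((h+(d-1)t)+l_i)$ is false as written, and the fix is exactly the double-check you promise --- tracing the action carefully gives $c_1(\chi)=+l_i$ in the paper's conventions, after which the product is literally $P_{\{1\}}(h+(d-1)t)$.
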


\begin{proof}
Since $\GL_n$ is special, by using Lemma \ref{algebraic.lemma}, part
I), we may perform our computations by restricting ourselves to $A^*_{T}$.

Recall that $\widetilde{Z}$ is the complete intersection of
hyperplanes given by polynomials
\[
F_{X_i} (X) = \sum_{v \in\N^n(d)} v_i
a_v X^{v-\widehat{i}}
\]
for $i=0,\dots, n-1$, where $\widehat{i}$ is the vector having 1 as
the $i$th entry and 0 everywhere else. We will call these hyperplanes
$F_i$. From Lemma \ref{hyperplane.T.classes} we have
\[
[ F_i ]_T = h + (d-1)t + l_i
\]
since $ \lambda_i^{-1} F_i = (\lambda_1, \dots, \lambda_n) \cdot F_i$.
We conclude by noticing that $[ \widetilde{Z} ]_{T}=\prod_{i=0}^{n-1}
 [ F_i  ]_T$.
\end{proof}

\begin{lemma}\label{lemma.point}
Let $\gamma$ be a class in $A^*_{\GL_n} ( \Pro(W_d) \times\Pro
(E)  )$, and let $Q(x,y)=\sum_{i=0}^{n-1}g_i(x)y^i$ be $\psi
(\gamma)$ as defined in Theorem \ref{prod.Wd.E}. Then we have the
identity $\pi_{1*}(\gamma)=g_{n-1}(h_d)$.
\end{lemma}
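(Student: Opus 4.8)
The plan is to exploit that $\pi_1\colon \Pro(W_d)\times\Pro(E)\to\Pro(W_d)$ is the projection from a product, so it is the pullback along $q\colon\Pro(W_d)\to B\GL_n$ of the projectivized bundle $p\colon\Pro(E)\to B\GL_n$, and to compute the pushforward fiberwise. First, by Proposition~\ref{prod.Wd.E} the module $A^*_{\GL_n}(\Pro(W_d)\times\Pro(E))$ is free over $A^*_{\GL_n}(\Pro(W_d))$ on the basis $1,t,\dots,t^{n-1}$. Writing $\psi(\gamma)=Q(x,y)=\sum_{i=0}^{n-1}g_i(x)y^i$ with $\deg_x g_i<N$, and evaluating at $(h_d,t)$, gives the expansion
\[
\gamma=\sum_{i=0}^{n-1}g_i(h_d)\,t^i
\]
of $\gamma$ in this basis, where each coefficient $g_i(h_d)$ is $\pi_1^*$ of the corresponding class on $\Pro(W_d)$ (recall $h_d=\pi_1^*(h_d)$ and $t=\pr^*(t)$).

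Next I would compute $\pi_{1*}(t^i)$ for $0\le i\le n-1$. Since $t=\pr^*(t)$ and the square formed by $\pi_1$, $\pr$ and the structure maps $q$ and $p$ is cartesian, base change in equivariant intersection theory gives $\pi_{1*}(t^i)=q^*p_*(t^i)$. By the projective bundle formula $p_*(t^i)=s_{i-n+1}(E)$, so $p_*(t^i)=0$ for $i<n-1$ (this also follows for degree reasons, since $p_*$ lowers degree by $n-1=\dim\Pro(E)$) and $p_*(t^{n-1})=s_0(E)=1$. Hence $\pi_{1*}(t^i)=0$ for $i<n-1$ and $\pi_{1*}(t^{n-1})=1$.

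Finally, combining these via the projection formula — legitimate because every $g_i(h_d)$ is pulled back from $\Pro(W_d)$ — yields
\[
\pi_{1*}(\gamma)=\sum_{i=0}^{n-1}g_i(h_d)\,\pi_{1*}(t^i)=g_{n-1}(h_d),
\]
which is the assertion. The only delicate point is the identification $\pi_{1*}(t^i)=\delta_{i,n-1}$, i.e.\ that the equivariant pushforward along $\Pro(E)\to B\GL_n$ behaves exactly as in the nonequivariant case; this is standard, using that $\Pro(E)$ is the projectivization of a vector bundle and that equivariant Chow groups are compatible with the relevant base change, so there is no real obstacle beyond bookkeeping.
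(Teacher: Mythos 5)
Your proof is correct and is essentially the paper's argument: reduce by $A^*_{\GL_n}(\Pro(W_d))$-linearity (projection formula) to the basis monomials and observe that $\pi_{1*}(t^j)=0$ for $j<n-1$ and $\pi_{1*}(t^{n-1})=1$. The paper obtains this last fact by a quick dimension count on fibers, while you derive it via base change to $B\GL_n$ and the Segre-class formula; that is the same computation packaged a bit more formally.
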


\begin{proof}
Since $\pi_{1*}: A^*_{\GL_n} ( \Pro(W_d) \times\Pro(E)
 ) \to A^*_{\GL_n} ( \Pro(W_d)  )$ is a homomorphism
of $A^*_{\GL_n}$-modules, it suffices to determine $\pi
_{1*}(h_d^it^j)$ with $i=0, \dots,n-1$ and $j=0, \dots, n-1$. So we
are reduced to the nonequivariant case. If $j < n-1$, then we would
have a positive dimensional fiber, and thus $\pi_{1*}(h_d^it^j)=0$. On
the other hand, $\pi_{1*}(h_d^it^{n-1})=h_d^i$.
\end{proof}

\begin{theorem}\label{generators.I.Z.tilde} Let $Q_{[d]}(x,y)=\sum_{i=1}^{n} \alpha_i(x)y^{n-i}$ be the polynomial $\psi ( [\wt
{Z}]  )$. We have
\[
I_{\wt{Z}} = ( \alpha_1 ( h_d ), \dots,
\alpha_n ( h_d ) ).
\]
\end{theorem}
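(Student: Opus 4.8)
The plan is to combine the two structural results already established, namely Proposition~\ref{class.of.Z.tilde} (which says $I_{\wt Z} = \pi_{1*}i_*(A^*_{\GL_n}(\wt Z))$ is generated, after pushforward, by the single class $[\wt Z]_{\GL_n}$ pushed forward to $\Pro(W_d)$) and Lemma~\ref{lemma.point} (which computes $\pi_{1*}$ of a class in terms of the coefficient of $y^{n-1}$ in its splitting polynomial). First I would recall that $I_{\wt Z}$, by definition, is $\pi_{1*}i_*(A^*_{\GL_n}(\wt Z))$, and that by Proposition~\ref{class.of.Z.tilde} the ideal $i_*(A^*_{\GL_n}(\wt Z)) \subseteq A^*_{\GL_n}(\Pro(W_d)\times\Pro(E))$ is generated as an ideal by $[\wt Z]_{\GL_n}$. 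So a general element of this ideal is $\gamma\cdot[\wt Z]_{\GL_n}$ for $\gamma \in A^*_{\GL_n}(\Pro(W_d)\times\Pro(E))$, and $I_{\wt Z}$ is the set of all $\pi_{1*}(\gamma\cdot[\wt Z]_{\GL_n})$.

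Next I would use the splitting morphism $\psi$ of Proposition~\ref{prod.Wd.E} together with Lemma~\ref{lemma.point}. Write $\psi([\wt Z]_{\GL_n}) = Q_{[d]}(x,y) = \sum_{i=1}^{n}\alpha_i(x)y^{n-i}$ — this is exactly the notation in the statement — and write $\psi(\gamma) = \sum_{j=0}^{n-1} b_j(x)y^j$ for an arbitrary $\gamma$. Since $\ev_{(h_d,t)}$ is a ring homomorphism, $\gamma\cdot[\wt Z]_{\GL_n}$ is the image under $\ev_{(h_d,t)}$ of the product $\bigl(\sum_j b_j(x)y^j\bigr)\bigl(\sum_i \alpha_i(x)y^{n-i}\bigr)$ in $A^*_{\GL_n}[x,y]$; the coefficient of $y^{n-1}$ in this product, after reducing the polynomial mod the ideal $(P_{[d]}(x),P_{\{1\}}(-y))$ to a representative of $x$-degree $< N$ and $y$-degree $< n$, is the value $g_{n-1}(h_d)$ that Lemma~\ref{lemma.point} identifies with $\pi_{1*}(\gamma\cdot[\wt Z]_{\GL_n})$. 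The point is that the $y$-reduction is governed by $P_{\{1\}}(-y) = \prod_{j}(l_j - y)$ (a monic degree-$n$ polynomial in $y$), so reducing $y^m$ for $m\ge n$ expresses it as an $A^*_{\GL_n}$-combination of $1,y,\dots,y^{n-1}$, and thus the coefficient of $y^{n-1}$ in the reduced product is an $A^*_{\GL_n}[x]$-linear combination of $\alpha_1(x),\dots,\alpha_n(x)$; after substituting $x=h_d$ and noting that $x$-reduction by $P_{[d]}(x)$ only changes things by elements already in $I_{\wt Z}$ (or simply by working with the images in $A^*_{\GL_n}(\Pro(W_d))$ directly), this shows $\pi_{1*}(\gamma\cdot[\wt Z]_{\GL_n}) \in (\alpha_1(h_d),\dots,\alpha_n(h_d))$. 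Conversely, choosing $\gamma$ to have $\psi(\gamma) = x^k$ for $k<N$ (so $\gamma = h_d^k$) gives $\pi_{1*}(h_d^k[\wt Z]_{\GL_n}) = \alpha_{n}(h_d)\cdot(\text{coefficient bookkeeping})$; more carefully, taking $\gamma = h_d^k t^{n-1-i}$ type classes and reading off coefficients shows each $\alpha_i(h_d)$, after possibly multiplying by powers of $h_d$, lies in $I_{\wt Z}$ — and in fact the cleanest route is to observe that $\pi_{1*}(t^{n-1}\cdot[\wt Z]) $ already picks off $\alpha_n(h_d)$, while pairing with $t^{n-1-j}$ for smaller $j$ recovers the higher $\alpha_i$, giving the reverse inclusion.

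I expect the main obstacle to be the bookkeeping in the $y$-reduction: one must be careful that after multiplying $\psi(\gamma)$ by $Q_{[d]}(x,y)$ — a product of polynomials of $y$-degrees up to $n-1$ and $n-1$ respectively, hence up to degree $2n-2$ — and reducing modulo $P_{\{1\}}(-y)$, the coefficient of $y^{n-1}$ really does land in the $A^*_{\GL_n}[x]$-ideal generated by the $\alpha_i$'s and not something larger, and that the subsequent substitution $x = h_d$ (equivalently, reduction mod $P_{[d]}(x)$) is compatible. One clean way to sidestep part of this is to note that $I_{\wt Z}$ is by construction an ideal of $A^*_{\GL_n}(\Pro(W_d)) = A^*_{\GL_n}[x]/(P_{[d]}(x))$, and that $\pi_{1*}$ is $A^*_{\GL_n}(\Pro(W_d))$-linear (projection formula), so it suffices to compute $\pi_{1*}$ on the $A^*_{\GL_n}(\Pro(W_d))$-module generators $1, t, \dots, t^{n-1}$ of $A^*_{\GL_n}(\Pro(W_d)\times\Pro(E))$ applied to $[\wt Z]_{\GL_n}$; by the projection formula $\pi_{1*}(t^j\cdot[\wt Z]) = \pi_{1*}(t^j\cdot \ev_{(h_d,t)}(Q_{[d]}(x,y)))$, and Lemma~\ref{lemma.point} (possibly applied after one more round of $y$-reduction) extracts exactly $\alpha_{n-j}(h_d)$-type expressions, making both inclusions transparent without any heavy computation.
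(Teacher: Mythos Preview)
Your approach is essentially the same as the paper's: use Proposition~\ref{class.of.Z.tilde} to reduce to classes of the form $\gamma\cdot[\wt Z]$, use the $A^*_{\GL_n}(\Pro(W_d))$-linearity of $\pi_{1*}$ to reduce to $\gamma\in\{1,t,\dots,t^{n-1}\}$, and apply Lemma~\ref{lemma.point}. The paper does exactly this; the containment $I_{\wt Z}\subseteq(\alpha_1,\dots,\alpha_n)$ is dispatched just as you outline.

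There is, however, a small but genuine slip in your reverse inclusion. You write that $\pi_{1*}(t^{n-1}\cdot[\wt Z])$ ``already picks off $\alpha_n(h_d)$''. It does not: since $Q_{[d]}(h_d,t)=\sum_{i=1}^n\alpha_i(h_d)t^{n-i}$, multiplying by $t^{n-1}$ gives $\sum_i\alpha_i(h_d)t^{2n-1-i}$, and every term with $i<n$ has $t$-exponent $\geq n$ and must be reduced modulo $P_{\{1\}}(-t)$ before Lemma~\ref{lemma.point} applies; the reduction brings in all the other $\alpha_j$'s. The clean starting point is the opposite end: $\pi_{1*}([\wt Z])=\alpha_1(h_d)$ on the nose, and then for $B_i:=t^{i-1}[\wt Z]$ one has
\[
\pi_{1*}(B_i)=\alpha_i(h_d)+\bigl(\text{an }A^*_{\GL_n}(\Pro(W_d))\text{-combination of }\alpha_1(h_d),\dots,\alpha_{i-1}(h_d)\bigr),
\]
so an upward induction on $i$ shows each $\alpha_i(h_d)\in I_{\wt Z}$. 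This is precisely how the paper argues. Once you reverse the direction of your induction, your proof is complete and matches the paper's.
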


\begin{proof}

As a preliminary remark, notice that we have the identity
\[
Q_{[d]}(x,y):=P_{\{ 1\}}(x + (d-1)y) - (-(d-1))^nP_{\{1\}}(-y).
\]

Let $J$ be the ideal in $A^*_{\GL_n} ( \Pro(W_{d})  )$
generated by the classes $\alpha_i ( h_d  )$. First, we
prove the inclusion $J \subseteq I_{\widetilde{Z}}$. It suffices to
show that, for all $i=1, \dots, n$, we have $\alpha_i(h_d) \in
I_{\widetilde{Z}}$. We apply induction on $i$. From Lemma \ref
{lemma.point} we have that $\alpha_1 ( h_d  )=\pi
_{1*} ( [\wt{Z}]  )$, and therefore $\alpha_1(h_d) \in
I_{\widetilde{Z}}$. Now, for every $i$ such that $1<i \leq n$, define
the class $B_i:= [\wt{Z}] \cdot t^{i-1}$, which clearly belongs to the
ideal generated by $ [\wt{Z}]$, and consequently $\pi_{1*}(B_i) \in
I_{\widetilde{Z}}$. We already know that
\[
B_i= Q_{[d]}(h_d,t)\cdot t^{i-1}=
\sum_{j=1}^{n} \alpha_j(h_d)t^{n+i
-1 - j}.
\]
We now split $B_i$ into the sum of three classes:
\[\begin{aligned}
B_i &= \sum_{j=1}^{i-1}
\alpha_j(h_d)t^{n+i -1 - j} + \alpha
_i(h_d)t^{n-1} + \sum
_{j=i+1}^{n} \alpha_j(h_d)t^{n+i -1 - j}
\\&=: \beta_i + \alpha_i(h_d)t^{n-1} +
\rho_i.\end{aligned}
\]
Since $\pi_{1*}$ is a homomorphism of $A^*_{\GL_n} ( \Pro
(W_{d})  )$-modules, we have that the class $\pi_{1*}(\beta_i)$
is in the ideal generated by the classes $\{ \alpha_j ( h_d
 ) \}_{j=1, \dots, i-1}$, which, by inductive hypothesis, is
contained in $I_{\widetilde{Z}}$. Moreover, by following the same
argument of Lemma \ref{lemma.point} we have $\pi_{1*}(\alpha
_i(h_d)t^{n-1}) = \alpha_i(h_d)$ and $\pi_{1*}(\rho_i)=0$. In
conclusion, we have
\[
\alpha_i(h_d) = \pi_{1*}(B_i) -
\pi_{1*}(\beta_i) \in I_{\widetilde{Z}}.
\]
Therefore, the classes $\alpha_i(h_d) \in I_{\widetilde{Z}}$ for all
$i=1, \dots, n$. Consequently, we have $J \subseteq I_{\widetilde{Z}}$.

On the other hand, let $\gamma$ be a class in $I_{\widetilde{Z}}$.
From Proposition \ref{class.of.Z.tilde} we have
\[
\gamma=\pi_{1*} ( B(h_d,t)\cdot Q_{[d]}(h_d,t)
)
\]
for some polynomial $B(x,y)\in A^*_{\GL_n}[x,y]$.
Now, it is straightforward to check that every $t$-coefficient of $\psi
 ( B(h_d,t) \cdot Q_{[d]}(h_d,t)  )$ is in the ideal
generated by the set $\{ \alpha_1(h_d), \dots, \alpha_n(h_d) \}$.
Therefore we have that $\gamma$ is in $J$ and $I_{\widetilde{Z}}
\subseteq J$.
\end{proof}

\begin{rmk}\label{rational.coefficients} Throughout this paper, we are
interested in integral coefficients. However, it is worth noticing
that, since the map $\widetilde{Z} \to Z$ is birational, the
pushforward morphism
\[
\pi_{1*}: A^*_{\GL_n} ( \widetilde{Z} ) \otimes\Q\to
A^*_{\GL
_n} ( Z ) \otimes\Q
\]
is surjective. Consequently, we have
\[
i_* ( A^*_{\GL_n} ( Z ) ) \otimes\Q= ( \alpha_1(h_d),
\dots, \alpha_n(h_d) ).
\]
\end{rmk}

As we already mentioned, the ideal $I_{\widetilde{Z}}$ is not, in
general, the whole ideal $I_{Z}$. However, the ideal $I_Z$ has been
already determined for quadrics (see \cite{EF} and \cite{Pan}) and
effective divisors of the projective line (see \cite{EFh}). More
precisely, we have the following result.

\begin{theorem}\label{lower.cases}
If $d=2$ or $n=2$, then $I_{\widetilde{Z}}=I_{Z}$.
\end{theorem}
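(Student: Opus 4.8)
The plan is to prove the equality $I_{\widetilde Z} = I_Z$ under the hypothesis $d = 2$ or $n = 2$, knowing that $I_{\widetilde Z} \subseteq I_Z$ always holds and that by Theorem~\ref{generators.I.Z.tilde} we have an explicit presentation $I_{\widetilde Z} = (\alpha_1(h_d), \dots, \alpha_n(h_d))$. Since the ideal $I_Z$ is by definition the image of the pushforward $i_*\colon A^*_{\GL_n}(Z) \to A^*_{\GL_n}(\Pro(W_d))$, and $\widetilde Z \to Z$ is birational (Remark~\ref{rational.coefficients}), the two ideals agree after tensoring with $\Q$; the content of the statement is therefore an integrality claim. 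First I would reduce, via Lemma~\ref{algebraic.lemma}, to working with torus-equivariant Chow groups, since $\GL_n$ is special.

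For the two cases I would proceed separately, as they have genuinely different geometry. In the case $d = 2$, the discriminant locus $Z \subseteq \Pro(W_2)$ is the locus of singular quadrics, i.e.\ quadratic forms of rank $< n$; here I would use the standard stratification of $Z$ by corank, whose strata are $\GL_n$-homogeneous (orbits of the $\GL_n$-action on symmetric matrices). Because each orbit closure $Z_r = \{\text{rank} \le r\}$ carries a resolution by a Grassmannian bundle — the variety of pairs (quadric of rank $\le r$, a chosen $(n-r)$-dimensional subspace of the kernel) — one can push forward from this resolution and compute everything explicitly in terms of Chern classes of tautological subbundles on the Grassmannian. One then checks, by descending induction on $r$, that every class supported on $Z_r$ lies modulo classes supported on $Z_{r-1}$ in the ideal $(\alpha_1(h_2), \dots, \alpha_n(h_2))$; the key point is that for $r = n-1$ the resolution is precisely $\widetilde Z$, so the top stratum contributes exactly $I_{\widetilde Z}$. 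In the case $n = 2$, $\Pro(W_d)$ is the space of degree-$d$ divisors on $\Pro^1$, and $Z$ is the locus of non-reduced divisors; here the natural resolution is the map $\Pro(W_1) \times \Pro(W_{d-2}) \to \Pro(W_d)$ sending $(\ell, g) \mapsto \ell^2 g$, whose image is $Z$ and which is generically injective. One computes the pushforward $\pi_*(1)$ of this map via Theorem~\ref{delta.classes} (with the relevant combinatorics now being that of $\Pcal_d$-type data on two variables), identifies it with the appropriate multiple of $[Z]$, and then checks that all of $i_*(A^*_{\GL_2}(Z))$ is generated over $A^*_{\GL_2}(\Pro(W_d))$ by this class, which in turn lies in $I_{\widetilde Z}$ because $\widetilde Z \to Z$ admits a compatible section on this resolution.

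The main obstacle will be the integrality in the passage from the resolution's pushforward to the full image $i_*(A^*_{\GL_n}(Z))$: rationally everything is clear, but one must show no denominators are needed. This amounts to verifying that the pushforward map from the resolution is surjective \emph{integrally} — equivalently, that $A^*_{\GL_n}(Z)$ is generated over $A^*_{\GL_n}(\Pro(W_d))$ by the classes coming from the resolution, together with classes supported on the smaller strata, with no torsion obstruction. In both cases this follows because the resolution $\widetilde Z \to Z$ (respectively its analogue) is an isomorphism over the dense open stratum, so that $A^*_{\GL_n}(Z)$ and $A^*_{\GL_n}(\widetilde Z)$ agree up to classes supported on the boundary, and the boundary is handled inductively by the same mechanism. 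This is exactly the strategy carried out in \cite{EF,Pan} for $d=2$ and in \cite{EFh} for $n=2$, and I would invoke those computations at the level of the explicit ideal identifications rather than redoing the Grassmannian-bundle and $\Pro^1$-divisor intersection-theory calculations in detail.
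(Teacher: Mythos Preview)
Your proposal and the paper's proof ultimately take the same route: both reduce the statement to the computations already carried out in \cite{EF,Pan} for $d=2$ and \cite{EFh} for $n=2$. The paper's proof, however, is much more direct than your sketch suggests: it does not re-describe the stratification-and-resolution strategy of those references, but simply writes down the generators $\alpha_i(h_d)$ produced by Theorem~\ref{generators.I.Z.tilde} (e.g.\ for $n=2$ one finds $\alpha_1(h_d)=2(d-1)h_d - d(d-1)c_1$ and $\alpha_2(h_d)=h_d^2 - c_1 h_d - d(d-2)c_2$) and observes that these coincide with the explicit generators of $I_Z$ already identified in \cite[Proposition~13]{EF} and \cite[Lemma~13, Theorem~19]{EFh}. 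So the entire proof is a matching of formulas, not a reproduction of the stratification argument.

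One small inaccuracy in your sketch: for $n=2$ you suggest that $i_*(A^*_{\GL_2}(Z))$ is generated as an ideal by the single class $[Z]$ coming from the resolution $(\ell,g)\mapsto \ell^2 g$. This is not the case---$I_Z$ requires two generators $\alpha_1(h_d),\alpha_2(h_d)$ in general, as the paper's explicit formulas show---so the argument in \cite{EFh} is a bit more involved than a single pushforward. Since you defer to the references in the end this does not affect the correctness of your plan, but it is worth being aware that the ``principal ideal'' picture is too optimistic.
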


\begin{proof}
If $d=2$, then the classes $\alpha_i(h_2)$ of $I_{\widetilde{Z}}$ are
exactly the classes of degree $i$ of $Q_{[d]}(h_2,1)$, and they are
equal to the classes $\alpha_i$ of \cite[Proposition~13]{EF}.

If $n=2$, then $\alpha_1(h_d)=2(d-1)h_d - d(d-1)c_1$ and $\alpha
_2(h_d)=h_d^2 -c_1h_d-d(d-2)c_2$, and these correspond to the classes
$\alpha_{1,0}$ and $\alpha_{1,1}$ of \cite[Lemma~13]{EFh}, which by
\cite[Theorem~19]{EFh} generate the ideal $I_Z$.
\end{proof}

We conclude this section by showing that the polynomial $P_{[d]}(x)$ is
in the ideal $ ( \alpha_1(x), \dots, \alpha_n(x)  )$.

\begin{propos}\label{big.polynomial}
We have $P_{[d]}(x) \in ( \alpha_1(x), \dots, \alpha_n(x)
 )$.
\end{propos}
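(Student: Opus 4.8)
The plan is to exploit the structure of $P_{[d]}(x)$ as a product over partitions, $P_{[d]}(x) = \prod_{\mu \in \Pcal_d} P_\mu(x)$, together with the algebraic reduction of Lemma~\ref{algebraic.lemma}. The point is that the $\alpha_i(x)$ are the coefficients (in powers of $y$) of the polynomial $Q_{[d]}(x,y) = P_{\{1\}}(x + (d-1)y) - (-(d-1))^n P_{\{1\}}(-y)$, so the ideal $(\alpha_1(x), \dots, \alpha_n(x)) \subseteq A^*_{\GL_n}[x]$ contains exactly those polynomials in $x$ that arise as $y$-coefficients of elements of the ideal $(Q_{[d]}(x,y))$ in $A^*_{\GL_n}[x,y]$. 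By Lemma~\ref{algebraic.lemma}, part~II), applied with the torus $T \subset \GL_n$, it suffices to prove the containment after extending scalars to $A^*_T = \Z[l_1, \dots, l_n]$; over the torus the polynomial $P_{\{1\}}(x) = \prod_{i=1}^n (x + l_i)$ and $P_\mu(x) = \prod_{v \in \N^n(\mu)} (x + v\cdot l)$ factor completely into linear forms, which is what makes the argument tractable.

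Working over $A^*_T[x,y]$, I would argue as follows. First, $Q_{[d]}(x,y)$ as a polynomial in $y$ has roots among the linear forms determined by $P_{\{1\}}$: evaluating the relation $F_{X_i}$ at a fixed point, or more directly, one checks that setting $x + (d-1)y = -l_i$, i.e.\ $y = -(x + l_i)/(d-1)$, kills the first term, and one must track the second term as well. The cleaner route is to recall (from the proof of Theorem~\ref{generators.I.Z.tilde}) that $Q_{[d]}(h_d, t)$ is the class $[\wt Z]_T = \prod_{i=0}^{n-1}(h_d + (d-1)t + l_i)$, so over the torus $Q_{[d]}(x,y) = \prod_{i=1}^{n}(x + (d-1)y + l_i)$. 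Thus I need to show $\prod_{v \in \N^n(d)}(x + v\cdot l)$ lies in the ideal of $A^*_T[x]$ generated by the $y$-coefficients of $\prod_{i=1}^n (x + (d-1)y + l_i)$. Equivalently: in $A^*_T[x,y]/(Q_{[d]}(x,y))$, every element is a polynomial in $y$ of degree $< n$ whose coefficients generate the relevant ideal, so it is enough to exhibit $P_{[d]}(x)$ as the constant-in-$y$ part (modulo lower structure) of a multiple of $Q_{[d]}(x,y)$ — or, better, to show that $P_{[d]}(x)$ becomes $0$ in the quotient ring $A^*_T[x]/(\alpha_1(x), \dots, \alpha_n(x))$, which is the coordinate ring after imposing that each $x + (d-1)y + l_i$ can be ``solved'': on each component where $y$ is specialized so that $x = -l_i - (d-1)y$, one sees that $x \cdot$ (shifts) run through values that make at least one factor $x + v\cdot l$ vanish, for a suitable $v \in \N^n(d)$ — namely $v = d\,\widehat{i}$ gives $x + v\cdot l = x + d\,l_i$, and combined with the $n$ linear relations one forces $P_{[d]}(x) = 0$.

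Concretely, the mechanism I expect to work: in $A^*_T[x]/(\alpha_1(x),\dots,\alpha_n(x))$, the relations say precisely that the polynomial $y \mapsto \prod_{i=1}^n(x + (d-1)y + l_i) = (-(d-1))^n\prod_{i=1}^n(-y - \tfrac{x+l_i}{d-1})$ has all its $y$-coefficients vanishing except we must be careful about the $(-(d-1))^n P_{\{1\}}(-y)$ correction; tracing this, the quotient ring is the ring where $x$ satisfies the relation obtained by eliminating $y$, and this relation is divisible by $P_{[d]}(x)$ up to the combinatorial identity relating $\N^n(d)$ to iterated sums. I would make this precise by the substitution identifying $\Pro(W_d)$-classes: $P_{[d]}(x) = \ev_{h_d}^{-1}(0)$-type generator, and $P_{[d]}(x) \in (\alpha_1(x), \dots, \alpha_n(x))$ is equivalent to saying that $A^*_{\GL_n}(\Pro(W_d))/I_{\wt Z}$ — which by the exact sequence~\eqref{PW} is $A^*_{\GL_n}[x]/(P_{[d]}(x), \alpha_1(x), \dots, \alpha_n(x))$ — equals $A^*_{\GL_n}[x]/(\alpha_1(x),\dots,\alpha_n(x))$; this in turn follows because $\wt Z \to \Pro(W_d)$ is a regular embedding whose class-theoretic effect, pushed forward, already contains $P_{[d]}(x)$ by Remark~\ref{classes.fixed.points} together with the localization Theorem~\ref{explicit.localization}, since each fixed-point class $[Q_v] = P_{[d]}(x)/(x + v\cdot l)\big|_{x = h_d}$ multiplied by $(x+v\cdot l)$ recovers $P_{[d]}(x)$, and one shows each such product lies in the ideal.

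The main obstacle I anticipate is the combinatorial bookkeeping: relating the index set $\N^n(d)$ (monomials of degree $d$) to the $n$-fold sums $\widehat 0, \dots, \widehat{n-1}$ that appear in the $\wt Z$ equations, and verifying that the elimination of $y$ from the $n$ linear-in-$y$ relations $\alpha_i$ produces precisely (a multiple of) $\prod_{v \in \N^n(d)}(x + v\cdot l)$ rather than some proper factor or a polynomial off by a non-unit constant. The characteristic hypothesis ($\Char k = 0$ or $> d$) will be needed exactly here to invert the powers of $d-1$ and the factorials that show up in the degrees; I would isolate this as the one place torsion could obstruct the argument, and check that $d-1$ and the relevant binomial coefficients are units. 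Once the elimination identity is established over $A^*_T$, descent via Lemma~\ref{algebraic.lemma}~II) immediately gives the statement over $A^*_{\GL_n}$.
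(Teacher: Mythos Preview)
Your setup is right --- reduce to the torus via Lemma~\ref{algebraic.lemma}~II) and exploit the factorization of $P_{\{1\}}$ --- but the middle of your argument never lands on a concrete step, and the ``elimination of $y$'' heuristic you propose does not match what the ideal $(\alpha_1(x),\dots,\alpha_n(x))$ actually is: it is generated by the $y$-coefficients of $Q_{[d]}(x,y)$, not by an elimination ideal, and there is no reason the resultant-type polynomial you envision should appear.

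The missing idea is much more direct. Since $Q_{[d]}(x,y)=\sum_{k=1}^{n}\alpha_k(x)\,y^{n-k}$, every specialization $Q_{[d]}(x,c)$ with $c\in A^*_T$ lies in $(\alpha_1(x),\dots,\alpha_n(x))\,A^*_T[x]$. Specialize at $y=l_j$: the correction term $(-(d-1))^n P_{\{1\}}(-y)$ vanishes because $P_{\{1\}}(-l_j)=\prod_i(l_i-l_j)$ contains the factor $l_j-l_j=0$, so
\[
Q_{[d]}(x,l_j)=\prod_{i=1}^n\bigl(x+(d-1)l_j+l_i\bigr).
\]
Now multiply over $j$ and read off the index set: the linear forms $x+(d-1)l_j+l_i$ for $i=j$ give the factors of $P_{\{d\}}(x)$, and for $i\neq j$ give exactly the factors of $P_{\{d-1,1\}}(x)$. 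Hence $\prod_{j=1}^n Q_{[d]}(x,l_j)=P_{\{d\}}(x)\,P_{\{d-1,1\}}(x)$, which divides $P_{[d]}(x)=\prod_{\mu\in\Pcal_d}P_\mu(x)$. Thus $P_{[d]}(x)\in(\alpha_1(x),\dots,\alpha_n(x))\,A^*_T[x]$, and Lemma~\ref{algebraic.lemma}~II) brings this back to $A^*_{\GL_n}[x]$. (For $d=2$ the partition $\{d-1,1\}$ degenerates and one argues separately.) No characteristic hypotheses or inversions of $d-1$ are needed; the combinatorial bookkeeping you flagged as the main obstacle is exactly this one-line identification of the factors.
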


\begin{proof}
If $d=2$, then we have the statement implicitly from \cite[Proposition~13]{EF}.

If $d > 2$, then we consider the following identity:
\[
P_{\{d\}}(x) \cdot P_{\{d-1,1\}}(x)=\prod
_{i=1}^{n}Q_{[d]}(x, l_i),
\]
where the polynomial $Q_{[d]}(x, y)$ is defined as in Theorem \ref
{generators.I.Z.tilde}. Since the polynomial $P_{[d]} ( x
)$ is a multiple of $P_{\{d\}}(x) \cdot P_{\{d-1,1\}}(x)$, we have that
$P_{[d]} ( x  )$ is a multiple of $\prod_{i=1}^{n}Q_{[d]}(x, l_i)$. Thus, it suffices to show that, in
$A^*_{\GL_n}[x]$, we have
\[
\prod_{i=1}^{n}Q_{[d]}(x,
l_i) \in ( \alpha_1(x), \dots, \alpha_n(x) ).
\]
This is clearly true in $A^*_T[x]$, and we conclude by using Lemma \ref
{algebraic.lemma}, part II).
\end{proof}

%%%%%%%%%%%%%%%%%%%%%%%%%%%
\section{The Case of Plane Cubics}\label{plane.cubics}
%%%%%%%%%%%%%%%%%%%%%%%%%%%

We consider now the particular case of plane cubics, namely the case
$n=3$ and $d=3$. In particular, we will give the following minimal set
of generators for the ideal $I_Z$:
\begin{equation}
\label{main.result} I_{Z}= ( \alpha_1, \alpha_2,
\alpha_3, \delta_{2} ),
\end{equation}
where, for simplicity, $\alpha_i:=\alpha_i(h_3)$, and $\delta_{2}$
is the class of the locus of cubics that are the unions of a line and a
conic. This is also the first case where $I_{\widetilde{Z}}\neq I_Z$.

First of all, we write explicitly the classes $\alpha_i$ by using
Theorem~\ref{generators.I.Z.tilde}:
\begin{align}
\label{alpha.classes}
\begin{split} \alpha_1&=12(h_3
- c_1),
\\
\alpha_2&= 6h_3^2 - 4h_3c_1
- 6 c_2,
\\
\alpha_3&= h_3^3 - h_3^2c_1
+ h_3c_2-9c_3. \end{split} %
\end{align}

\subsection{Stratification}\label{stratification}

\begin{defi}\label{strata}
We consider the following loci in $Z$:
\begin{itemize}
\item$Z_1$ is the locus of reduced and irreducible singular cubics
(with exactly one singular point);
\item$Z_2$ is the locus of cubics that are unions of a smooth conic
and a line with two distinct intersection points;
\item$Z_3$ is the union of two components:
\begin{itemize}
\item[] $Z_{(3,1)}$ is the locus of cubics that are unions of a smooth
conic and a line tangent to the conic;
\item[] $Z_{(3,2)}$ is the locus of cubics that are unions of three
distinct lines with three distinct intersection points;
\end{itemize}
\item$Z_4$ is the locus of cubics that are unions of three distinct
lines passing through the same point;
\item$Z_5$ is the locus of cubics that are unions of a double line and
a single distinct line;
\item$Z_7$ is the locus of triple lines.
\end{itemize}
\end{defi}

\begin{rmk}
All $Z_i$ are smooth and locally closed in $\Pro ( W_3  )$.
Furthermore, we have that $Z$ is the closure of $Z_1$ in $\Pro (
W_3  )$. We also observe that $\ov{Z}_{(3,1)}\cap\ov
{Z}_{(3,2)}=\ov{Z_4}$. Moreover, we have chosen the indexes in such a
way that $Z_i$ has codimension $i$ in $\Pro(W_3)$. Notice that there
is a gap in codimension 6. Finally, we observe that all these loci are
invariant for the action of $\GL_3$.
\end{rmk}

Such a stratification of the singular locus $Z$ is equipped with a
natural partial ordering given by
\[
Z_i \leq Z_j \longleftrightarrow\overline{Z}_i
\subseteq\overline{Z}_j.
\]

So we can represent such a stratification with a digraph. On the left
column, we write the codimension of the corresponding strata in $\Pro(W_3)$.
\[
\xymatrix{
\text{Codimension} & & \text{Locus} &\\
1 & & Z_{1} \ar@{-}[d] & \\
2 & & Z_{2} \ar@{-}[dr] \ar@{-}[dl]& \\
3 & Z_{(3,1)} \ar@{-}[dr] & & Z_{(3,2)} \ar@{-}[dl] \\
4 & & Z_{4}\ar@{-}[d]&\\
5 & & Z_5 \ar@{-}[dd]&\\
6 & & & \\
7 & & Z_7 &\\
}
\]

\begin{defi} We define the {\it topological classes} corresponding to
the above loci: $\delta_{i}:= [ \ov{Z}_i ]$.
\end{defi}

\begin{rmk}
We have $[Z]=\delta_1=\alpha_1$.
\end{rmk}

\begin{defi}
We define the following maps (see Section~\ref{universal-locus}):
\begin{itemize}
\item$\pi_2:= \pi_{\{ 1,2 \}}:\Pro ( W_1  ) \times
\Pro ( W_2  ) \to\Pro ( W_3  )$, where $(f,g)
\mapsto f \cdot g$;
\item$\pi_3:= \pi_{ \{ 1,1,1 \} }:\Pro ( W_1
)^{\times3} \to\Pro ( W_3  )$, where
$(f,g,h) \mapsto f \cdot g \cdot h$.
\end{itemize}
\end{defi}

Recall that we have already defined the map $\pi_1:\wt{Z} \to
Z$ (see Section~\ref{universal-locus}). We notice that, for $i=1, 2$,
we have $\im ( \pi_i  )= \ov{Z}_i$ and $\im ( \pi
_3  )= \ov{Z}_{(3,2)}$. Also, all these maps are invariant for
the action of $\GL_3$. Moreover, $\pi_1$ and $\pi_2$ are birational
to their images.

%%%%%%%%%%%%%%%%%%%%%%%%%%%%%%%%%%%%%%%%%%%%%%%%%%%%
\subsection{Basic Principle of Proof} \label{basic.principle}
%%%%%%%%%%%%%%%%%%%%%%%%%%%%%%%%%%%%%%%%%%%%%%%%%%%%

The proof of identity (\ref{main.result}) is split into several steps.
We rely on the following basic principle. Let
\[
Y_n \subset Y_{n-1} \subset\cdots\subset Y_1
\subset X
\]
be a sequence of closed $G$-subspaces of a smooth $G$-space $X$ ordered
by inclusion, and let $I$ be an ideal in $A^*_G(X)$. We call $i$ all
the closed inclusions $Y_k \to X$ and $Y_{k} \setminus Y_{k+1} \to
X \setminus Y_{k+1}$, whereas we denote by $j$ the open inclusions
$X \setminus Y_{k} \to X$.

Then, to show the inclusion $i_* ( A^G_*(Y_1) ) \subset I$,
it suffices to show that\break  $i_*  ( A^G_*(Y_n)  ) \subset I$
and that, for all $k=1, \dots, n-1$, we have $i_*  ( A^G_* ( Y_k
\setminus Y_{k+1})  ) \subset j^*(I) \subset A^*_G  ( X
\setminus Y_{k+1}  )$.

In our case, we first show the inclusion $i_*  ( A^{\GL_3}_*
 ( \ov{Z}_{(3,1)} ) ) \subseteq I_{\wt{Z}}$. This\break
also implies that $i_*  ( A^{\GL_3}_*  ( \ov{Z}_4
) ) \subseteq I_{\wt{Z}}$. Then we can prove the inclusion\break  $i_*
 ( A^{\GL_3}_*  ( \ov{Z}_{(3,2)} ) ) \subseteq
I_{\wt{Z}}$ by restricting ourselves to the open set $\Pro(W_3)
\setminus\ov{Z}_4$. At this point, we have that $i_*  (
A^{\GL_3}_*  ( \ov{Z}_{3} ) ) \subseteq I_{\wt
{Z}}$, and we can show the inclusion $i_*  ( A^{\GL_3}_*  (
\ov{Z}_{2} ) ) \subseteq(\alpha_1, \alpha_2, \alpha_3,
\delta_{2})$ by restricting ourselves to the open set $\Pro(W_3)
\setminus\ov{Z}_3$, which will conclude the proof of
identity~(\ref{main.result}).

%%%%%%%%%%%%%%%%%%%%%%%%%%%%%%%%%%%%%%%%%%%%%%%%%%%
\subsection{The Ideal $i_*  ( A^{\GL_3}_*  ( \ov
{Z}_{(3,1)} ) )$ Is Contained in $I_{\wt{Z}}$}\label{Z31}
%%%%%%%%%%%%%%%%%%%%%%%%%%%%%%%%%%%%%%%%%%%%%%%%%%%

\begin{propos}\label{first.stratum}
Let us define $\partial Z_4 := \ov{Z}_4 \setminus Z_4$. We have
the inclusion
\[
i_* ( A_*^{\GL_3} (\partial Z_4 ) ) \subseteq
I_{\wt{Z}}.
\]
\end{propos}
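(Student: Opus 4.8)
The goal is to show that every class in $A_*^{\GL_3}(\partial Z_4)$, when pushed forward to $A^*_{\GL_3}(\Pro(W_3))$, lands in $I_{\wt{Z}}$. The first task is to identify $\partial Z_4$ geometrically. Since $Z_4$ is the locus of unions of three distinct concurrent lines, its boundary $\ov Z_4 \setminus Z_4$ should consist of the degenerate configurations: triples of concurrent lines where two (or all three) coincide, i.e. a (double line) $\cup$ (line through the same point), together with triple lines. In the stratification this is $\ov Z_5 \cup Z_7 = \ov Z_5$ (note $Z_7 \subseteq \ov Z_5$), or some closed subscheme thereof; I would pin down this description precisely first, as it determines what geometry is available.

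**Main step: produce a resolution with a distinguished singular point.** The strategy outlined in the introduction is that whenever the hypersurfaces in a locus carry a \emph{distinguished} singular point, the inclusion $T \subseteq P_{3,3}$ lifts to a morphism $T \to \wt Z$, and then pushforwards of classes from $T$ obviously factor through $\pi_{1*}(A^*_{\GL_3}(\wt Z)) \subseteq I_{\wt Z}$. For $\partial Z_4$ this is exactly the situation: a cubic consisting of a double line $2\ell \cup \ell'$ has the singular locus $\ell$ (the double line), but more to the point, the common point $\ell \cap \ell'$ is canonically determined, and for a triple line $3\ell$ every point of $\ell$ is singular. I would build an explicit $\GL_3$-equivariant space $\wt{\partial Z_4}$ (e.g. a $\Pro^1$-bundle, or $\ov Z_5$ itself equipped with the choice of the concurrency point, which is canonical on the open part) mapping finitely — ideally birationally — onto $\ov{\partial Z_4}$, together with a $\GL_3$-equivariant lift to $\wt Z$. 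Then for any $\gamma \in A_*^{\GL_3}(\partial Z_4)$ one writes $i_*\gamma$ via the projection formula in terms of pushforwards from this resolution, and concludes $i_*\gamma \in \pi_{1*}(A^*_{\GL_3}(\wt Z)) = I_{\wt Z}$.

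**Handling the non-birational / closure issues.** The subtlety is that $\partial Z_4$ is not itself closed, so one cannot push forward directly from $A_*^{\GL_3}(\partial Z_4)$ — one must either (a) use the localization/excision exact sequence $A_*^{\GL_3}(\ov{\partial Z_4} \setminus \partial Z_4) \to A_*^{\GL_3}(\ov{\partial Z_4}) \to A_*^{\GL_3}(\partial Z_4) \to 0$ and descending induction on codimension as in Section~\ref{basic.principle}, reducing to showing that classes from $\ov Z_5$ and then from $Z_7$ lie in $I_{\wt Z}$; or (b) directly produce a $\GL_3$-equivariant resolution $\ov{\partial Z_4}_1 \to \ov{\partial Z_4}$ of the \emph{closure} with a lift to $\wt Z$, and argue that the image of $A^*_{\GL_3}(\ov{\partial Z_4}_1) \to A^*_{\GL_3}(\Pro(W_3))$ agrees with that of $A^*_{\GL_3}(\ov{\partial Z_4})$ up to classes supported on the smaller stratum. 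I expect approach (a) to be cleanest here since the strata $\ov Z_5$ and $Z_7$ are simple (a $\Pro^1$-bundle over $\Pro(W_1)^2$ — or a $\Pro(W_1) \times \Pro(W_1)$ quotient — and a single $\Pro^2 = \Pro(W_1)$ respectively), so their equivariant Chow rings are computable and the distinguished-singular-point lift is transparent on each.

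**Expected main obstacle.** The technical heart is verifying that the lift to $\wt Z$ is genuinely $\GL_3$-equivariant and defined on the whole resolution (not just the open dense part), and controlling the pushforward precisely enough that the correction terms genuinely live on the boundary stratum rather than contributing something outside $I_{\wt Z}$ — i.e. making the "up to classes supported in $\ov T \setminus T$" bookkeeping rigorous. Once that is in place, the containment in $I_{\wt Z}$ is formal from $I_{\wt Z} = \pi_{1*}(A^*_{\GL_3}(\wt Z))$ (Propositions~\ref{class.of.Z.tilde} and~\ref{generators.I.Z.tilde}). The actual Chern-class computations needed (the normal bundle data, the classes $\delta_\mu$ from Theorem~\ref{delta.classes}) should be routine given the machinery already set up.
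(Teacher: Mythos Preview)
Your plan is correct and matches the paper's architecture: identify $\partial Z_4 = \ov Z_5 = Z_5 \sqcup Z_7$, apply the basic principle of Section~\ref{basic.principle}, and on each stratum factor the pushforward through $\wt Z$. (One small correction: $\partial Z_4 = \ov Z_4 \setminus Z_4$ \emph{is} closed---it equals $\ov Z_5$---so your worry about $\partial Z_4$ not being closed is unnecessary.)

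The only methodological difference is in the lifting step. You propose a distinguished-singular-point section $Z_5 \to \wt Z$ via $\ell^2\ell' \mapsto (\ell^2\ell',\, \ell\cap\ell')$, which works for $Z_5$ but, as you note, has no analogue on $Z_7$. The paper avoids this asymmetry: rather than building a section, it shows directly that $\pi_{1*}\colon A^*_{\GL_3}(\wt Z_7)\to A^*_{\GL_3}(Z_7)$ and $\pi_{1*}\colon A^*_{\GL_3}(\wt Z_5)\to A^*_{\GL_3}(Z_5)$ are \emph{surjective}, by exhibiting $\GL_3$-equivariant isomorphisms of $\wt Z_7 := \pi_1^{-1}(Z_7)$ and $\wt Z_5 := \pi_1^{-1}(Z_5)$ with the incidence varieties in $\Pro(W_1)\times\Pro(E)$ and $(\Pro(W_1)^2\setminus D)\times\Pro(E)$ respectively (along the double line), each of which is a $\Pro^1$-bundle over its base. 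This is exactly the ``$\Pro^1$-bundle'' option you mention in passing, and it handles both strata uniformly without needing to single out a point.
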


\begin{proof}
The algebraic set $\partial Z_4$ can be stratified as $Z_5 \sqcup Z_7$.

Notice that $\partial Z_4=\ov{Z}_5$. We also define $\wt{Z}_5=\pi
_{1}^{-1}(Z_5)$ and $\wt{Z}_7=\pi_{1}^{-1}(Z_7)$.

We refer to the following commutative diagram of $\GL_3$-equivariant maps:
\[
\xymatrix{
\wt{Z}_7 \ar[r]^{i} \ar[d]^{\pi_1} & \ov{\wt{Z}}_5 \ar[r]^{i}
\ar[d]^{\pi_1} & \wt{Z} \ar[d]^{\pi_1} \\
Z_7 \ar[r]^{i} & \ov{Z}_5 \ar[r]^-{i} & \Pro(W_3)
}
\]

Because of commutativity of this diagram and the basic principle
explained in Section~\ref{basic.principle}, it suffices to prove
that the homomorphisms $\pi_{1*}: A^*_{\GL3} (\wt{Z}_7) \to
A^*_{\GL3} (Z_7)$ and $\pi_{1*}: A^*_{\GL3} ( \wt{Z}_5) \to
A^*_{\GL3} (Z_5)$ are surjective. Consider the following commutative diagram:
\[
\xymatrix{
\wt{V} \ar[r]^{\wt{\psi}} \ar[d]^{\pi_1}& \wt{Z}_7 \ar[d]^{\pi
_1} \\
\Pro(W_1) \ar[r]^-{\psi} & Z_7
}
\]
where $\wt{V}$ is the incidence variety if $\Pro(W_1) \times\Pro
(E)$, and the map $\psi$ (resp.\ $\wt{\psi}$) sends $[l]$ (resp.\
$([l], P)$) to $[l^3]$ (resp.\ $([l^3], P)$). In particular, $\pi
_{1}:\wt{V} \to\Pro(W_1)$ is a projective bundle, and
therefore $\pi_{1*}: A^{*}_{\GL_3}(\wt{V}) \to A^{*}_{\GL
_3}(\Pro(W_1)) $ is surjective. Moreover, the maps $\psi$ and $\wt
{\psi}$ are geometrically bijective, and a straightforward computation
shows that their induced Jacobian maps are injective (here we use the
fact that $\Char(k) > 3$). Therefore, the maps $\psi$ and $\wt{\psi
}$ are isomorphisms, and consequently $\pi_{1*}: A^*_{\GL3} (\wt
{Z}_7) \to A^*_{\GL3} (Z_7)$ is surjective.

To show that $\pi_{1*}: A^*_{\GL3} ( \wt{Z}_5) \to A^*_{\GL3}
(Z_5)$ is surjective, we apply a similar argument to the diagram
\[
\xymatrix{
\wt{V} \ar[rr]^{\wt{\psi}} \ar[d]^{\pi_1}& &\wt{Z}_5 \ar
[d]^{\pi_1} \\
(\Pro(W_1) \times\Pro(W_1) \setminus D) \ar[rr]^-{\psi} & & Z_5
}
\]
where $D$ is the diagonal, $\wt{V}$ is the incidence variety (with
respect to the first component) in $(\Pro(W_1) \times\Pro(W_1)
\setminus D) \times\Pro(E)$, and the map $\psi$ (resp.\ $\wt
{\psi}$) sends $[l],[w]$ (resp.\ $([l],[w], P)$) to $[l^2w]$ (resp.\
$([l^2w], P)$).
\end{proof}

\begin{propos}\label{envelope.3.1}
The restriction map
\[
\wt{V}:= \wt{Z} \rrvert _{Z_{(3,1)} \cup Z_4} \xrightarrow {\pi_1}
Z_{(3,1)} \cup Z_4
\]
is an equivariant Chow envelope.
\end{propos}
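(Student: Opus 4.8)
The plan is to exhibit a proper, surjective, $\GL_3$-equivariant morphism onto $Z_{(3,1)} \cup Z_4$ from a variety whose equivariant Chow groups we understand, and to check that the standard conditions defining a Chow envelope are met. Recall that a $\GL_3$-equivariant proper morphism $f\colon Y \to X$ is a Chow envelope if for every (equivariant) subvariety $V \subseteq X$ there is a subvariety $V' \subseteq Y$ mapping birationally onto $V$ via $f$; equivalently, by stratifying, it is enough that $f$ restricts to such a map over a dense open of each stratum of a $\GL_3$-invariant stratification of $X$. So I would first record that $Z_{(3,1)} \cup Z_4$ has the $\GL_3$-invariant stratification $Z_{(3,1)} \sqcup Z_4$, with $\ov Z_{(3,1)} = Z_{(3,1)} \cup Z_4$ (indeed $Z_4$, three lines through a point, is a degeneration of a conic plus a tangent line), so I only need to handle the generic behavior over each of these two strata.

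Next I would identify the source. An element of $Z_{(3,1)}$, a smooth conic together with a line tangent to it, has a \emph{distinguished} singular point, namely the point of tangency, which is the unique singular point of the cubic lying on the conic — more precisely it is the point where the conic and the line meet, and it is a well-defined point of the cubic. This gives a lifting of the inclusion $Z_{(3,1)} \hookrightarrow \Pro(W_3)$ to a $\GL_3$-equivariant section $Z_{(3,1)} \to \wt Z$ of $\pi_1$ (namely $[C\cup\ell]\mapsto([C\cup\ell],p)$ with $p$ the tangency point), so $\pi_1$ is generically $1\!:\!1$ over $Z_{(3,1)}$. Over $Z_4$, a union of three distinct concurrent lines, the fiber of $\pi_1$ is the common point of the three lines together with the three pairwise intersection points — but these all coincide, so the fiber is a single reduced point and $\pi_1$ is again $1\!:\!1$ there. (One should note that $\wt V := \wt Z|_{Z_{(3,1)}\cup Z_4}$ is the \emph{scheme-theoretic} restriction, hence equivariant, closed in $\wt Z$, and proper over $Z_{(3,1)} \cup Z_4$ since $\pi_1$ is proper.) Thus $\pi_1\colon \wt V \to Z_{(3,1)} \cup Z_4$ is proper, surjective, and restricts to a birational morphism over a dense open of each stratum; applying the basic principle of Section~\ref{basic.principle} to the invariant chain $Z_4 \subset Z_{(3,1)} \cup Z_4$, any subvariety of $Z_{(3,1)}\cup Z_4$ either lies in $Z_4$, where $\pi_1$ is an isomorphism onto its image over the open part, or meets $Z_{(3,1)}$, where $\pi_1$ is birational onto its image over the open part — either way it admits a birational lift to $\wt V$. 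Hence $\pi_1$ is a $\GL_3$-equivariant Chow envelope.

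The step I expect to require the most care is the verification of birationality over $Z_{(3,1)}$, i.e.\ that the tangency point really does give a well-defined algebraic section and that $\pi_1$ has no extra components of the fiber there. Concretely one must check: (i) a smooth conic plus a tangent line is singular at exactly the tangency point — it is a node of the cubic degenerating to a more singular point only in the locus $Z_4$, which we have stripped off — and (ii) the partial-derivative equations cutting out $\wt Z$ over the generic point of $Z_{(3,1)}$ are reduced, so that the lift is an isomorphism onto an open subset of $\wt V$ rather than merely a set-theoretic bijection; this is where the hypothesis $\Char(k)>3$ is used, exactly as in the proof of Proposition~\ref{first.stratum}. Once these local checks are in place the Chow-envelope conclusion is formal.
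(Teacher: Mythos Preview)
Your strategy is the paper's: stratify the target as $Z_{(3,1)} \sqcup Z_4$, observe that a cubic in either stratum has a \emph{unique} singular point, and deduce that every $\GL_3$-invariant subvariety $W$ lifts birationally (take the closure of the unique preimage of the generic point of $W$; uniqueness gives $\GL_3$-invariance). The paper argues directly at the generic point of $W$ rather than constructing a global section, but the content is identical.

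Two of your auxiliary claims are wrong, though neither is needed. First, $\ov Z_{(3,1)} \neq Z_{(3,1)} \cup Z_4$: the closure also contains $Z_5$ and $Z_7$ (see the diagram in Section~\ref{stratification}). This is harmless since the proposition concerns the locally closed set $Z_{(3,1)} \cup Z_4$ as stated. Second, and more substantively, the scheme-theoretic fiber of $\pi_1$ is \emph{not} reduced over either stratum: the tacnode on a conic-plus-tangent-line is an $A_3$ singularity and the triple point on three concurrent lines is a $D_4$, so the Tjurina ideal has colength $3$ and $4$ respectively, not $1$. Hence your claim~(ii) is false, and the characteristic hypothesis is neither needed nor used here (the paper's proof does not invoke it). Fortunately this is irrelevant to the argument: the Chow-envelope condition asks for \emph{subvarieties} of $\wt V$, which are reduced by definition, so one simply takes the reduced closure of the (set-theoretically unique) fiber point over the generic point of $W$; birationality then follows because uniqueness of the singular point forces it to be rational over the residue field of that generic point, exactly as the paper argues. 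Delete the reducedness discussion and your argument is correct and coincides with the paper's.
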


\begin{proof}

Recall that an equivariant Chow envelope of a $G$-scheme $X$ is a
proper $G$-equivariant morphism $f:\wt{X} \to X$ such that, for every
$G$-invariant subvariety $W \subset X$, there is a $G$-invariant
subvariety $\wt{W} \subset f^{-1}(W)$ whose restriction morphism
$f:\wt{W} \to W$ is birational.

We already know that the map
\[
\wt{V}:= \wt{Z} \rrvert _{Z_{(3,1)} \cup Z_4} \xrightarrow {\pi_1}
Z_{(3,1)} \cup Z_4
\]
is proper and $\GL_3$-equivariant.

It suffices to show the statement for each of the two components
$Z_{(3,1)}$ and $Z_4$. Since the proofs are very similar, we only show
the case of $Z_{(3,1)}$.

Let $W$ be a $\GL_{3}$-invariant subvariety in $Z_{(3,1)}$. Let
$\omega$ be the generic point of~$W$. The point $\omega$ is
represented by a cubic form $f$ in $K[X_0,X_1,X_2]$ for some extension
$k \subset K$. By definition $f$ is the product of a linear and a
quadratic form with only one singular point. Therefore there is a
unique $K$-valued point $\wt{\omega}$ in $\wt{Z}_{(3,1)}$ mapping to
$\omega$. To conclude, define $\wt{W} := \ov{\wt{\omega}}$;
because of the uniqueness of the rational point, $\wt{W}$ is in fact
$\GL_{3}$-invariant.
\end{proof}

\begin{corol} \label{tangent}
We have the inclusion
\[
i_* ( A_*^{\GL_3} ( \ov{Z}_{(3,1)} ) ) \subseteq
I_{\wt{Z}}.
\]
\end{corol}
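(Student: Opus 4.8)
The plan is to combine Proposition~\ref{envelope.3.1}, Proposition~\ref{first.stratum}, and the basic principle of Section~\ref{basic.principle}. By Proposition~\ref{first.stratum} we already know $i_*\bigl(A_*^{\GL_3}(\partial Z_4)\bigr)\subseteq I_{\wt Z}$, and since $\ov{Z}_{(3,1)}=Z_{(3,1)}\cup Z_4\cup\partial Z_4$ as a stratified set, the basic principle reduces the claim to showing that every class in $A_*^{\GL_3}\bigl(Z_{(3,1)}\cup Z_4\bigr)$ pushes forward into $j^*(I_{\wt Z})$ inside $A^*_{\GL_3}\bigl(\Pro(W_3)\setminus\partial Z_4\bigr)$. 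Concretely, it suffices to prove that the restriction of $i_*$ to the closed stratum $Z_{(3,1)}\cup Z_4$ of the open subspace $\Pro(W_3)\setminus\partial Z_4$ has image contained in $I_{\wt Z}$.

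To do this I would invoke Proposition~\ref{envelope.3.1}, which says that $\pi_1\colon \wt V \to Z_{(3,1)}\cup Z_4$ is a $\GL_3$-equivariant Chow envelope, where $\wt V = \wt Z|_{Z_{(3,1)}\cup Z_4}$. A standard property of equivariant Chow envelopes (Kimura, adapted to the equivariant setting in~\cite{EG}) is that the pushforward $\pi_{1*}\colon A_*^{\GL_3}(\wt V)\to A_*^{\GL_3}(Z_{(3,1)}\cup Z_4)$ is surjective. Hence every class $\gamma \in A_*^{\GL_3}(Z_{(3,1)}\cup Z_4)$ is of the form $\pi_{1*}(\wt\gamma)$ for some $\wt\gamma \in A_*^{\GL_3}(\wt V)$. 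Now consider the commutative square formed by the inclusion $\wt V \hookrightarrow \wt Z$, the inclusion $Z_{(3,1)}\cup Z_4 \hookrightarrow \Pro(W_3)$ (restricted to the open complement of $\partial Z_4$, or equivalently the inclusion into $Z$), and the two copies of $\pi_1$. Pushing $\wt\gamma$ forward first along $\wt V\hookrightarrow \wt Z$ and then along $\pi_1\colon \wt Z\to\Pro(W_3)$ lands, by definition, in $I_{\wt Z}=\pi_{1*}i_*\bigl(A^*_{\GL_3}(\wt Z)\bigr)$; by commutativity this equals $i_*\bigl(\pi_{1*}\wt\gamma\bigr)=i_*(\gamma)$ in $A^*_{\GL_3}\bigl(\Pro(W_3)\setminus\partial Z_4\bigr)$. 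Therefore $i_*(\gamma)\in j^*(I_{\wt Z})$, as required, and the basic principle then yields $i_*\bigl(A_*^{\GL_3}(\ov Z_{(3,1)})\bigr)\subseteq I_{\wt Z}$.

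The only genuinely substantive input is Proposition~\ref{envelope.3.1} (already proved) together with the surjectivity of pushforward for Chow envelopes; everything else is bookkeeping with the commutative diagram and an application of the formalism of Section~\ref{basic.principle}. The one point that requires a little care is keeping track of which ambient space one works over at each stage — one first establishes the inclusion over the open set $\Pro(W_3)\setminus\partial Z_4$, using that $\partial Z_4$ itself has already been handled, and only then glues via descending induction on codimension. I do not expect any serious obstacle here; the corollary is essentially a formal consequence of the two preceding propositions.
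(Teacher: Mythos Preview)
Your proof is correct and follows essentially the same approach as the paper: invoke Proposition~\ref{envelope.3.1} to get surjectivity of $\pi_{1*}$ on $Z_{(3,1)}\cup Z_4$ (the paper cites \cite[Lemma~3]{EG} and \cite[Lemma~18.3(6)]{Ful} for this), use the commutative square with $\wt Z$ to land in $I_{\wt Z}$, then combine with Proposition~\ref{first.stratum} and the basic principle of Section~\ref{basic.principle}. Your write-up is slightly more explicit about the open-set bookkeeping, but the argument is the same.
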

\begin{proof}
Consider the following commutative diagram of proper maps:
\[
\xymatrix{
\wt{Z} |_{ \ov{Z}_{(3,1)}} \ar[r]^-{i} \ar[d]_{\pi_1} & \wt Z \ar
[d]^{\pi_1}\\
\ov{Z}_{(3,1)} \ar[r]^{i} & \Pro(W_3)
}
\]
Since by Proposition \ref{envelope.3.1} the map $\pi_1:\wt{V}
\to Z_{(3,1)} \cup Z_4$ is an equivariant Chow envelope, we have from
\cite[Lemma 3]{EG} and \cite[Lemma 18.3(6)]{Ful} that the pushforward
$\pi_{1*}: A^{\GL_3}_*  ( \wt{V} ) \to A^{\GL_3}_*
 ( Z_{(3,1)} \cup Z_4  )$ is surjective.
By the commutativity of the diagram we have
\[
i_* ( A_*^{\GL_3} ( Z_{(3,1)} \cup Z_4 ) ) \subseteq
 I_{\wt{Z}} \rrvert _{Z_{(3,1)} \cup Z_4}.
\]

We conclude by recalling that $i_*  ( A_*^{\GL_3}  (\partial
Z_4  ) ) \subseteq I_{\wt{Z}}$ by Proposition \ref
{first.stratum} and by using the basic principle of Section~\ref{basic.principle}.
\end{proof}

%%%%%%%%%%%%%%%%%%%%%%%%%%%%%%%%%%%%%%%%%%%%%%%%%%%
\subsection{The Ideal $i_*  ( A^{\GL_3}_*  ( \ov
{Z}_{(3,2)} ) )$ Is Contained in $I_{\wt{Z}}$}\label{Z32}
%%%%%%%%%%%%%%%%%%%%%%%%%%%%%%%%%%%%%%%%%%%%%%%%%%%

Let us consider the product map
\[
\Pro(W_1)^{\times3} \xrightarrow{\pi_3}
\ov{Z}_{(3,2)}.
\]
We call $\xi_1$, $\xi_2$, and $\xi_3$ the three hyperplane classes
corresponding to the pullback of hyperplane classes through the three
different projections from $\Pro(W_1)^{\times3}$ to $\Pro(W_1)$.
Arguing as in Proposition \ref{prod.Wd.E}, we have a splitting exact
sequence of $A^*_{\GL_3}$-modules
\[
\xymatrix@R=10pt{
0 \ar[r] &  (P_{\{1\}}(y_1), P_{\{1\}}(y_2), P_{\{1\}}(y_3)
 ) \\\ar[r] & A^*_{\GL_3}[y_1,y_2,y_3] \ar[rr]_{\ev_{(\xi
_1,\xi_2,\xi_3)}} & &  A^*_{\GL_3}  (\Pro(W_1)^{\times3}
 ) \ar[r] \ar@/_1pc/[ll]_{\psi} & 0.
}
\]

To prove the inclusion $i_*  ( A^{\GL_3}_*  ( \ov
{Z}_{(3,2)}  ) ) \subseteq I_{\wt{Z}}$, using the explicit
localization theorem (Theorem \ref{explicit.localization}), we first
show that $i_* ( \delta_{(3,2)}  ) \in I_{\wt{Z}}$ (where
we recall that $\delta_{(3,2)}:= [\ov{Z}_{(3,2)}]$).

\begin{propos}\label{class.delta.3.2}
We have the identity
\begin{equation}
\label{delta_3.2} \delta_{(3,2)}= ( ( h_3-c_1
)^2 + c_2 )\alpha _1 - c_1
\alpha_2 + 3\alpha_3.
\end{equation}
\end{propos}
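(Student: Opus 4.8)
The plan is to compute $\delta_{(3,2)} = [\ov{Z}_{(3,2)}]$ directly from Theorem~\ref{delta.classes}, specialized to $n=3$, $d=3$, and $\mu = \{1,1,1\}$, and then recognize the resulting polynomial as the claimed combination of $\alpha_1$, $\alpha_2$, $\alpha_3$. The closure $\ov{Z}_{(3,2)}$ is the image of $\pi_3 = \pi_{\{1,1,1\}}\colon \Pro(W_1)^{\times 3} \to \Pro(W_3)$, so $\delta_{(3,2)} = \delta_{\{1,1,1\}}$ in the notation of that theorem, with $\deg(\pi_{\{1,1,1\}}) = 3! = 6$.

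First I would write out formula~\eqref{delta.classes.formula} for $\mu = \{1,1,1\}$: here $s=3$ and $k_1=k_2=k_3=1$, so $\N^3(k_j) = \N^3(1) = \{\wh 0, \wh 1, \wh 2\}$, the three coordinate vectors, and the outer sum runs over $27$ triples $(v_1,v_2,v_3)$. For a fixed triple, the numerator is $\prod_{v \in \N^3(3),\, v \ne v_1+v_2+v_3}(h_3 + v\cdot l)$, which by Remark~\ref{classes.fixed.points} is exactly $[Q_{v_1+v_2+v_3}]$, and each denominator factor $\prod_{v \in \N^3(1),\, v\ne v_j}(v - v_j)\cdot l$ equals $c^T_{\mathrm{top}}(T_{Q_{v_j}}\Pro(W_1))$ by Lemma~\ref{top.chern.class}. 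Concretely, for $v_j = \wh 0$ this denominator factor is $(l_2 - l_1)(l_3 - l_1)$, and similarly by permutation for $\wh 1$ and $\wh 2$. Then I would carry out the sum over the $27$ terms (with the $6$ permutations of a given unordered target grouping naturally, and the diagonal-type degenerate triples like $(\wh 0,\wh 0,\wh 0)$ contributing the fixed point $Q_{(3,0,0)}$), collect, and divide by~$6$; the symmetry in $l_1,l_2,l_3$ guarantees the answer lies in $\Z[c_1,c_2,c_3][h_3]$, so it can be rewritten via $c_1 = -(l_1+l_2+l_3)$, etc. This is the computation flagged in Section~\ref{Z2} as "we compute the class $\delta_2$," and indeed $\delta_{(3,2)}$ agrees with the relevant piece of that computation; the software note at the end of the introduction ("calculations carried out by using Maple~16") covers this step.

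Once I have $\delta_{(3,2)}$ as an explicit polynomial in $h_3$ and $c_1, c_2, c_3$, the final step is purely formal: using the explicit expressions~\eqref{alpha.classes} for $\alpha_1, \alpha_2, \alpha_3$, I verify the identity
\[
\delta_{(3,2)} = \bigl((h_3 - c_1)^2 + c_2\bigr)\alpha_1 - c_1\alpha_2 + 3\alpha_3
\]
by expanding the right-hand side and matching coefficients. The main obstacle is the localization bookkeeping: keeping track of which of the $27$ triples $(v_1,v_2,v_3)$ produce the same fixed point $Q_{v_1+v_2+v_3}$ on $\Pro(W_3)$ (the partitions of the target multidegree into three coordinate vectors), and clearing the denominators $(l_i - l_j)$ correctly so that the sum visibly becomes a symmetric polynomial — the individual summands are only rational functions, and the cancellation of the denominators is where errors creep in. Everything after that is a routine polynomial identity check against~\eqref{alpha.classes}.
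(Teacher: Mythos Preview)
Your proposal is correct and follows essentially the same route as the paper: apply Theorem~\ref{delta.classes} with $\mu=\{1,1,1\}$, compute the tangent Chern classes at the three fixed points of $\Pro(W_1)$, perform the localization sum, and then check the resulting degree-$3$ polynomial against the combination of $\alpha_1,\alpha_2,\alpha_3$. The paper records the intermediate closed form $\delta_{(3,2)} = 15h_3^3 - 45c_1h_3^2 + (40c_1^2+15c_2)h_3 - 12c_1^3 - 6c_1c_2 - 27c_3$ before matching; your aside pointing to Section~\ref{Z2} is a slip (that section computes $\delta_2=\delta_{\{1,2\}}$, not $\delta_{(3,2)}$), but it does not affect the argument.
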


\begin{proof} We refer to Section~\ref{universal-locus}. Since $\delta
_{(3,2)}=\delta_{ \{ 1,1,1\} }$, we evaluate formula (\ref
{delta.classes.formula}) for $\mu=\{ 1,1,1\}$ and $d=3$.

As preliminary computations, we get
\begin{eqnarray*}
c^T_{\mathrm{top}} ( T_{Q_{(1,0,0)}} \Pro (W_1 ) )
&=& (l_2 - l_1) (l_3 - l_1),
\\
c^T_{\mathrm{top}} ( T_{Q_{(0,1,0)}} \Pro (W_1 ) )
&=& (l_1 - l_2) (l_3 - l_2),
\\
c^T_{\mathrm{top}} ( T_{Q_{(0,0,1)}} \Pro (W_1 ) )
&=& (l_1 - l_3) (l_2 - l_3).
\end{eqnarray*}

Now, straightforward computations show the relation
\[
\delta_{(3,2)}= 15h_3^3 - 45c_1h_3^2
+ ( 40c_1^2 + 15 c_2 ) h_3
-12c_1^3 - 6c_1c_2
-27c_3
\]
and, consequently, identity (\ref{delta_3.2}).
\end{proof}

\begin{defi}
Let $X$ be a $G$-space, and let $\Gamma$ be a finite group acting
(properly) on $X$ such that the action of $\Gamma$ commutes with the
action of $G$. We say that two classes $\gamma_1, \gamma_2 \in
A^*_{G}(X)$ are $\Gamma$-equivalent if, for some $f \in\Gamma$, we
have $f_* ( \gamma_1  )= \gamma_2$.
\end{defi}

\begin{propos}\label{push.forward.pi3}
We have the inclusion
\[
\pi_{3*} ( A_*^{\GL_3} ( \Pro(W_1)^{\times3}
) ) \subset I_{\wt{Z}}.
\]
\end{propos}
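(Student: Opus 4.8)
The plan is to reduce the statement to a torus-equivariant computation and then to exhibit, for every class on $\Pro(W_1)^{\times 3}$, an explicit preimage expression under $\pi_{3*}$ that lands in $I_{\wt Z}$. By Lemma~\ref{algebraic.lemma}, part~I), together with Remark~\ref{remark.algebraic.lemma}, it suffices to prove the inclusion after base change to the maximal torus $T$; so we work in $A^*_T(\Pro(W_1)^{\times 3})$. Using the splitting exact sequence just displayed before the statement, every such class is $\ev_{(\xi_1,\xi_2,\xi_3)}$ of a polynomial $Q(y_1,y_2,y_3)$ which is symmetric (since $\Pro(W_1)^{\times 3}$ carries the $S_3$-action permuting the factors, and we may symmetrize without changing the pushforward by $\pi_3$, as $\pi_3$ is $S_3$-invariant). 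Thus it is enough to compute $\pi_{3*}$ on a spanning set of symmetric monomials $\xi_1^{a}\xi_2^{b}\xi_3^{c}$ in the free $A^*_T$-module on $\{\xi_1^{a}\xi_2^{b}\xi_3^{c} \mid 0 \le a,b,c \le 2\}$.

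The key step is to evaluate $\pi_{3*}(\xi_1^a\xi_2^b\xi_3^c)$ explicitly. I would do this exactly as in the proof of Theorem~\ref{delta.classes}: apply the explicit localization formula (Theorem~\ref{explicit.localization}) to the fixed points $(Q_{v_1},Q_{v_2},Q_{v_3})$ with $v_j \in \N^3(1) = \{(1,0,0),(0,1,0),(0,0,1)\}$, using the top Chern classes of the tangent spaces computed in Proposition~\ref{class.delta.3.2} (these are the $(l_i - l_j)$ factors), then push forward via $\pi_3(Q_{v_1},Q_{v_2},Q_{v_3}) = Q_{v_1+v_2+v_3}$ and rewrite $[Q_{v_1+v_2+v_3}]$ using Remark~\ref{classes.fixed.points}. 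The restriction $i^*_{(v_1,v_2,v_3)}(\xi_j)$ to a fixed point is the $T$-weight $l_{\text{(index of the nonzero entry of }v_j)}$, so the whole sum becomes a rational expression in the $l_i$ and $h_3$ that, being $S_3$-invariant in the $l_i$, lies in $A^*_{\GL_3}[h_3]/(P_{[3]}(h_3))$. Concretely each $\pi_{3*}(\xi_1^a\xi_2^b\xi_3^c)$ comes out as a polynomial in $h_3$ and $c_1,c_2,c_3$ of the expected degree.

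The final step is to show that each of these polynomials lies in $I_{\wt Z} = (\alpha_1,\alpha_2,\alpha_3)$ (by Theorem~\ref{generators.I.Z.tilde}, with the $\alpha_i = \alpha_i(h_3)$ of~\eqref{alpha.classes}). Here I would use two facts already available: first, $\delta_{(3,2)} = \pi_{3*}(1)/\deg(\pi_3) = \pi_{3*}(1)/6$ is in $I_{\wt Z}$ by Proposition~\ref{class.delta.3.2} (the right-hand side of~\eqref{delta_3.2} is manifestly an $A^*_{\GL_3}$-combination of the $\alpha_i$); and second, $P_{[3]}(h_3) \in (\alpha_1,\alpha_2,\alpha_3)$ by Proposition~\ref{big.polynomial}, so I have the relation modulo which I am computing for free. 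For the higher monomials $\pi_{3*}(\xi_1^a\xi_2^b\xi_3^c)$ I would reduce their $h_3$-degree using $P_{[3]}(h_3)$ and then check membership in $(\alpha_1,\alpha_2,\alpha_3)$ by an explicit division — in practice a small Gröbner-basis or linear-algebra computation in $\Z[c_1,c_2,c_3,h_3]$, of the kind the paper attributes to Maple. The main obstacle I anticipate is that the integral coefficients must work out exactly: rationally the inclusion is immediate from Remark~\ref{rational.coefficients} (which gives $i_*(A^*_{\GL_3}(Z))\otimes\Q = (\alpha_1,\alpha_2,\alpha_3)$ and hence contains $\pi_{3*}(\cdot)\otimes\Q$), so the real content is controlling denominators, and the delicate point is ensuring that the $1/\deg(\pi_3) = 1/6$ appearing in the localization formula is absorbed — this should follow because $\pi_{3*}$ is a genuine pushforward of classes, defined before dividing, but the explicit monomial computations are where a sign or a factor could go wrong and must be verified carefully.
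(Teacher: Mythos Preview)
Your approach would in principle succeed, but it differs substantially from the paper's and misses the structural idea that makes the argument clean. The paper does two things you do not: first, it uses the push--pull formula with $\pi_3^*(h_3)=\xi_1+\xi_2+\xi_3$ to eliminate one variable entirely, so that the image of $\pi_{3*}$ is already generated by $\pi_{3*}(\xi_1^{v_1}\xi_2^{v_2})$; combined with the $S_3$-equivalence $\pi_{3*}(\xi^v)=\pi_{3*}(\xi^{\sigma v})$ this leaves only six monomials $\xi_1^{v_1}\xi_2^{v_2}$ with $2\ge v_1\ge v_2\ge 0$. Second --- and this is the key point --- for each such monomial the paper does \emph{not} compute $\pi_{3*}$ by localization and then test ideal membership. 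Instead it writes down a $T$-invariant locus $S_1\subset\Pro(W_1)^{\times 3}$ (for example, the locus where the first two lines pass through $[1{,}0{,}0]$) whose class, by Lemma~\ref{hyperplane.T.classes}, equals $(\xi_1+l_1)(\xi_2+l_1)=\xi_1\xi_2+\text{(lower terms)}$, and observes that on $S_1$ the product cubic is singular at the prescribed point, so $\pi_3|_{S_1}$ lifts to a map $\tilde\pi_3:S_1\to\wt Z$. This gives $\pi_{3*}([S_1])\in I_{\wt Z}A^*_T(\Pro(W_3))$ immediately, and an induction on the monomials (together with Remark~\ref{remark.algebraic.lemma}) finishes. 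Your route replaces this geometric lifting by a brute localization computation followed by an integral Gr\"obner/linear-algebra check; that would confirm the inclusion numerically, but the paper's argument explains \emph{why} each pushforward lies in $I_{\wt Z}$ and never requires verifying that an explicit polynomial in $h_3,c_1,c_2,c_3$ sits in $(\alpha_1,\alpha_2,\alpha_3)$.

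A small wording issue: your symmetrization step is imprecise. Not every class is represented by a symmetric polynomial, and symmetrizing multiplies $\pi_{3*}$ by the orbit size, which over $\Z$ is exactly the denominator problem you flag at the end. The correct reduction (the one the paper uses) is simply that $\pi_{3*}(\xi^v)=\pi_{3*}(\xi^{\sigma v})$, so one representative per orbit suffices --- no averaging is involved.
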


\begin{proof}
The free $A^*_{\GL_3}$-module $\psi ( A^*_{\GL_3}  ( \Pro
(W_1)^{\times3}  )  )$ is generated by monomials
$y_1^{v_1} y_2 ^{v_2} y_3^{v_3}$ such that every nonnegative integer
$v_i$ is less than 3. Therefore, it suffices to consider the
pushforward of the classes $\xi_1^{v_1} \xi_2^{v_2} \xi_3^{v_3}$
where each $v_i$ is either 0, 1, or 2. Moreover, $\pi_3^*
(h_3 ) = \xi_1 + \xi_2 + \xi_3$, and applying the push--pull
formula, we have that $\pi_{3*}  ( A_*^{\GL_3}  ( \Pro
(W_1)^{\times3}  )  )$ is generated by the classes $\pi
_{3*} ( \xi_1^{v_2}\xi_2^{v_2} )$. Now, we notice that the
morphism $\pi_3$ is $S_3$-equivariant, where the action on $\Pro
(W_1)^{\times3}$ is permuting the three components, and the action on
$\ov{Z}_{(3,2)}$ is trivial. Therefore, for every $f \in S_3$, we have
the commutative diagram
\[
\xymatrix{
\Pro(W_1)^{\times3} \ar[rr]^{f} \ar[dr]_{\pi_3}& & \Pro
(W_1)^{\times3} \ar[dl]^{\pi_3}\\
& \ov{Z}_{(3,2)} &
}
\]
This means, in particular, that if $\gamma:=\xi_1^{v_1}\xi_2^{v_2}$
and $\gamma':=\xi_1^{v'_1}\xi_2^{v'_2}$ are $S_3$-equivalent, then
$\pi_3  ( \gamma )=\pi_3  ( \gamma'  )$.
Consequently, we can see that $\pi_{3*}  ( A_*^{\GL_3}  (
\Pro(W_1)^{\times3}  )  )$ is generated by the classes
$\pi_{3*} ( \xi_1^{v_2}\xi_2^{v_2} )$ with $2 \geq v_1
\geq v_2 \geq0$. We consider each of the six cases separately.
\begin{itemize}

\item$\pi_{3*}  ( 1  ) \in I_{\wt{Z}}$.

Since $\pi_{3*}  ( 1  )=6\delta_{(3,2)}$, this case is
covered in Proposition \ref{class.delta.3.2}.

\item$\pi_{3*}  ( \xi_1  ) \in I_{\wt{Z}}$.

Consider the identities $\pi_3^* (h_3 ) = \xi_1 + \xi_2 +
\xi_3$ and $\pi_{3*}(\xi_1)=\pi_{3*}(\xi_2)=\pi_{3*}(\xi_3)$. By
applying the push--pull formula we get $\pi_{3*}  ( \xi_1
)=2h_3\delta_{(3,2)}$.

\item$\pi_{3*}  ( \xi_1 \xi_2  ) \in I_{\wt{Z}}$.

By Lemma \ref{algebraic.lemma} we can restrict our computations to
$A^*_T$. Let us define $S_1 \subset\Pro(W_1)^{\times3}$ as the locus
where the first two lines pass through $[1,0,0]$. We apply Lemma \ref
{hyperplane.T.classes} to get
\[
[ S_1 ] = (\xi_1 + l_1) (\xi_2 +
l_1) = \xi_1\xi_2 + l_1 (
\xi_1 + \xi_2) + l_1^2 \in
A^*_T ( \Pro(W_1)^{\times3} ).
\]
We have the following commutative diagram:
\begin{equation}
\label{split.diagram} \xymatrix{
& \Pro(W_3) \times\Pro(E) \ar[dr]^{\pi_1} & \\
\Pro(W_1)^{\times3} \ar[rr]^{\pi_3} \ar[ur]^{\wt{\pi}_3} & &
\Pro(W_3)
}
\end{equation}
where $\wt{\pi}_3$ maps $(f,g,h)$ to $(fgh, [1,0,0])$. By definition
we have that $\wt{\pi}_3(S_1) \subset\wt{Z}$; therefore, since the
diagram is commutative, we have $\pi_{3*}( [ S_1  ]) \in
I_{\wt{Z}}A^*_T  ( \Pro(W_3)  )$. More explicitly, we have
\[
\pi_{3*} ( \xi_1\xi_2 ) + 2l_1
\pi_{3*} ( \xi_1 ) + l_1^2
\delta_{(3,2)} \in I_{\wt{Z}}A^*_T ( \Pro
(W_3) ).
\]
Therefore $\pi_{3*} ( \xi_1\xi_2  ) \in I_{\wt{Z}}A^*_T
 ( \Pro(W_3)  )$. From the argument of Remark \ref
{remark.algebraic.lemma} we have $\pi_{3*} ( \xi_1\xi_2
) \in I_{\wt{Z}}$.

\item$\pi_{3*}  ( \xi_1^2  ) \in I_{\wt{Z}}$.

Consider the identities
\begin{eqnarray*}
 \pi_3^*(h_3^2) &=& \xi_1^2
+ \xi_2^2 + \xi_3^2 + 2(
\xi_1\xi_2 + \xi_1\xi_3 +
\xi_2\xi_3),
\\
 \pi_{3*} (\xi_1^2) &=& \pi_{3*} (
\xi_2^2) = \pi_{3*} (\xi_3^2),
\\
 \pi_{3*} (\xi_1\xi_2) &=&
\pi_{3*} (\xi_1\xi_3) = \pi_{3*} (
\xi _2\xi_3).
\end{eqnarray*}
By applying the push--pull formula we get $\pi_{3*}  ( \xi_1^2
 )=2h_3^2 \delta_{(3,2)} - 2 \pi_{3*}(\xi_1 \xi_2)$.

\item$\pi_{3*}  ( \xi_1^2 \xi_2  ) \in I_{\wt{Z}}$.

We argue exactly as in the proof of $\pi_{3*}  ( \xi_1 \xi_2
 ) \in I_{\wt{Z}}$. In this case, we choose $S_1$ to be the
locus where the first line passes through $[1,0,0]$ and $[0,1,0]$,
whereas the second line passes through $[1,0,0]$. We get
\[
[S_1] = (\xi_1 + l_1) (\xi_1 +
l_2) (\xi_2 + l_1).
\]
Again, the map $\pi_3$ factors through $\wt{\pi}_3$, and a simple
computation shows that $\pi_{3*}  ( \xi_1^2 \xi_2  ) \in
I_{\wt{Z}}$.

\item$\pi_{3*}  ( \xi_1^2 \xi^2_2  ) \in I_{\wt{Z}}$.

We argue again as in the proof of $\pi_{3*}  ( \xi_1 \xi_2
 ) \in I_{\wt{Z}}$. In this case, we choose $S_1$ to be the
locus where the first line passes through $[1,0,0]$ and $[0,1,0]$,
whereas the second line passes through $[1,0,0]$ and $[0,0,1]$. We get
\[
[S_1] = (\xi_1 + l_1) (\xi_1 +
l_2) (\xi_2 + l_1) (\xi_2 +
l_3).
\]
As before, the map $\pi_3$ factors through $\wt{\pi}_3$, and a
simple computation shows that $\pi_{3*}  ( \xi_1^2 \xi^2_2
 ) \in I_{\wt{Z}}$.
\end{itemize}
%
%\noqed
\end{proof}

Our next goal is to prove Corollary \ref{gamma.in.IZtilde}. Since we
already know that\break  $i_* ( A^{\GL_3}_*  ( \ov{Z}_4
) ) \subset A^*_{\GL_3}  ( \Pro(W_3)  )$ is
contained in $I_{\wt{Z}}$, we can restrict ourselves to classes in
$A^{\GL_3}_*  ( \ov{Z}_{(3,2)} )$ up to classes in
$i_* ( A^{\GL_3}_*  ( \ov{Z}_4 ) ) \subset
A^*_{\GL_3}  (\ov{Z}_{(3,2)}  )$.

We will need the following fact: for any class $\gamma\in A^{\GL_3}_*
 ( \ov{Z}_{(3,2)} )$, there exist two classes $\gamma' \in
i_* A^{\GL_3}_*  ( \ov{Z}_{4}  ) \subset A^{\GL_3}_*
 ( \ov{Z}_{(3,2)} )$ and $\ov{\gamma} \in A^*_{\GL_3}
 ( \Pro(W_1)^{\times3}  )^{S_3}$ such that $6(\gamma-
\gamma') = \pi_{3*} \ov{\gamma}$. This is a particular case of the
following lemma.

\begin{lemma}\label{pushforward.symmetric}
Let $G$ be an affine algebraic group, and let $\Gamma$ be a finite
group. Suppose that the following conditions are satisfied:
\begin{enumerate}[(3)]

\item[(1)]$G$ and $\Gamma$ act on an algebraic variety $X$, and the two
actions commute.

\item[(2)]$G$ also acts on an algebraic variety $Y$, and $f: X \to Y$
is a proper $G$-equivariant and $\Gamma$-invariant morphism.

\item[(3)]$V \subseteq Y$ is an open $G$-invariant subscheme such that if
$U = f^{-1}(V)$, and the restriction $f\mid_{U}: U \to V$ is
finite and flat with constant degree~$d$.
\end{enumerate}

Set $Z = Y \setminus V$ and call $i: Z \to Y$ the embedding.

Then, for any class $\gamma\in A^{G}_{*}(Y)$, we can find $\gamma'
\in i_*(A^{G}_*(Z))$ and $\overline{\gamma} \in A^{G}_*(X)^{\Gamma}$
such that
\[
d(\gamma- \gamma') = f_{*} \ov{\gamma}.
\]
\end{lemma}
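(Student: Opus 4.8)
The plan is to use the standard pushforward–pullback argument for finite flat morphisms, combined with an averaging over the group $\Gamma$. First I would observe that, by basic properties of equivariant Chow groups and the localization sequence, the restriction map $j^*: A^G_*(Y) \to A^G_*(V)$ is surjective with kernel the image of $i_*: A^G_*(Z) \to A^G_*(Y)$. So given $\gamma \in A^G_*(Y)$, it suffices to produce a class $\overline{\gamma} \in A^G_*(X)^{\Gamma}$ with $j^*(f_*\overline{\gamma}) = d\, j^*(\gamma)$ in $A^G_*(V)$; then $d\gamma - f_*\overline{\gamma}$ lies in $\ker j^* = i_*(A^G_*(Z))$, giving the desired $\gamma'$.

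Next I would work on the open set $V$ with $U = f^{-1}(V)$ and the finite flat map $g := f|_U : U \to V$ of constant degree $d$. For a finite flat morphism there is a Gysin pullback $g^*: A^G_*(V) \to A^G_*(U)$, and the key identity $g_* g^* = d \cdot \mathrm{id}$ on $A^G_*(V)$ (this is the equivariant version of \cite[Example~1.7.4]{Ful}; flatness gives $g^*$, properness gives $g_*$, and the degree computation is the projection formula applied to the fundamental classes, or can be checked after restricting to the generic point of each component). Applying this to the class $\beta := j^*(\gamma) \in A^G_*(V)$ yields $g^* \beta \in A^G_*(U)$ with $g_*(g^*\beta) = d\beta$.

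The remaining point is to promote $g^*\beta$ to a class on all of $X$ that is genuinely $\Gamma$-invariant, not just one whose pushforward agrees with $d\gamma$ up to $i_*(A^G_*(Z))$. Since $A^G_*(U) = A^G_*(X)/i_*(A^G_*(X \setminus U))$ — note $X \setminus U = f^{-1}(Z)$ — I can lift $g^*\beta$ to some $\widetilde{\gamma} \in A^G_*(X)$. I would then average: set $\overline{\gamma} := \frac{1}{|\Gamma|}\sum_{h \in \Gamma} h_* \widetilde{\gamma}$, which requires inverting $|\Gamma|$; in the application $\Gamma = S_3$ and the ambient groups act on torsion-free Chow rings, or more precisely the lemma is applied in the form ``$d(\gamma - \gamma') = f_*\overline{\gamma}$'' where the denominators are absorbed — I should double-check whether the paper intends rational coefficients here, or whether $\Gamma$-invariance is meant in the weaker ``up to $i_*$'' sense. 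The honest obstacle: making $\overline{\gamma}$ exactly $\Gamma$-invariant in $A^G_*(X)^{\Gamma}$ with integer coefficients is not automatic, because $h_*\widetilde{\gamma}$ need not equal $\widetilde{\gamma}$ even though $g^*\beta$ is $\Gamma$-invariant in $A^G_*(U)$ (as $g$ is $\Gamma$-invariant).

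To resolve this cleanly I would instead argue as follows: because $f$ is $\Gamma$-invariant, $g$ descends the $\Gamma$-action on $U$ to the trivial action on $V$, so $g^*\beta$ is automatically fixed by every $h \in \Gamma$ in $A^G_*(U)$. Now choose the lift $\widetilde{\gamma}$ and replace it by $\frac{1}{|\Gamma|}\sum_h h_*\widetilde{\gamma}$ after tensoring the relevant groups with $\Z[1/|\Gamma|]$, or — better for an integral statement — observe that since all the Chow groups in play here ($A^G_*$ of projective spaces and their products) are torsion-free and $\Gamma$ acts trivially on a cofinite piece, one can in fact choose the original $\widetilde{\gamma}$ already $\Gamma$-invariant by picking representatives adapted to the $\Gamma$-equivariant stratification $X = U \sqcup f^{-1}(Z)$. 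I expect the main subtlety to be precisely this lifting-plus-invariance step; the finite-flat degree identity $g_*g^* = d$ and the localization sequence are routine. Once $\overline{\gamma} \in A^G_*(X)^{\Gamma}$ is in hand with $j^*(f_*\overline{\gamma} - d\gamma) = 0$, setting $\gamma' := \gamma - \frac{1}{d}(f_*\overline{\gamma} - d\gamma)$... rather, directly, $d\gamma - f_*\overline{\gamma} \in i_*(A^G_*(Z))$ provides $\gamma'$, completing the proof.
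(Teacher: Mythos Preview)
Your outline correctly identifies the two ingredients --- the localization sequence and the finite--flat identity $g_*g^* = d$ --- and you also correctly put your finger on the one real difficulty: producing an \emph{integral} $\Gamma$-invariant lift $\overline{\gamma}\in A^G_*(X)^{\Gamma}$ of $g^*\beta$. But you do not actually resolve it. Averaging by $\frac{1}{|\Gamma|}\sum_h h_*$ requires inverting $|\Gamma|$, and your fallback (``choose the lift already $\Gamma$-invariant by picking representatives adapted to the stratification'') is an assertion, not an argument: the surjectivity of $A^G_*(X)^{\Gamma}\to A^G_*(U)^{\Gamma}$ is precisely what is at stake, and it does not follow formally from the localization sequence.

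The paper avoids this problem entirely by working at the level of \emph{cycles} rather than classes. After a standard reduction to the nonequivariant case (replace $X$, $Y$ by finite-dimensional approximations $(X\times D)/G$, $(Y\times D)/G$), one writes $\gamma = \sum_i a_i[A_i] + \sum_j b_j[B_j]$ with the $A_i$ not contained in $Z$ and the $B_j\subseteq Z$; set $\gamma' = \sum_j b_j[B_j]$. For each $A_i$, let $\overline{A}_i$ be the closure in $X$ of the scheme-theoretic preimage $f^{-1}(A_i\cap V)\subseteq U$, and set $\overline{\gamma} = \sum_i a_i[\overline{A}_i]$. The point is that since $f$ is $\Gamma$-invariant, each $f^{-1}(A_i\cap V)$ is a $\Gamma$-invariant subscheme, hence so is its closure $\overline{A}_i$; thus $\overline{\gamma}\in A_*(X)^{\Gamma}$ automatically, with no averaging and no denominators. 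The identity $f_*\overline{\gamma} = d(\gamma-\gamma')$ then follows from the degree of $f|_U$. In short, the missing idea in your proposal is to lift cycles rather than classes: the scheme-theoretic pullback of a subvariety along a $\Gamma$-invariant map is $\Gamma$-invariant on the nose, which is exactly what your class-level argument could not guarantee.
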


\begin{proof}
Let $E$ be a representation of $G$, and let $D \subset E$ be an
equivariant open subset of $E$ such that $G$ acts freely on $D$.
Replacing $X$ with $\frac{X \times D}{G}$, and the same for $Y$ and
$V$, and assuming that the codimension of $E \setminus D$ in $E$ is
sufficiently large, we are reduced to the nonequivariant case: in other
words, we can assume that $G$ is trivial.

Let $\gamma$ be a class in $A_{k}(Y)$. Write
\[
\gamma= \sum_{i} a_i [ A_i
] + \sum_{j} b_{j}[B_{j}],
\]
where $a_{i}$ and $b_{j}$ are integers, $A_{i}$ are subvarieties of $Y$
not contained in $Z$, whereas $B_{j}$ are subvarieties of $Z$. We set
$\gamma' = \sum_{j} b_{j}[B_{j}]$, so that $\gamma- \gamma' = \sum_{i} a_i  [ A_i  ]$.

For each $i$, denote by $A'_{i}$ the scheme-theoretic inverse image of
$A_{i} \cap V$ in $U$ and by $\overline{A}_{i}$ the scheme-theoretic
closure of $A'_{i}$ in $X$. Each $\overline{A}_{i}$ is a purely
$k$-dimensional $\Gamma$-invariant subvariety of $X$. Set $\overline
{\gamma} = \sum_{i} a_i[\overline{A}_{i}]$. These $\gamma'$ and
$\overline{\gamma}$ satisfy the conditions of the statement.
\end{proof}

In the proof of Proposition \ref{2gamma}, we will also use the
following lemma.

\begin{lemma}\label{3IZtilde}
Consider the following diagram:
\[
\xymatrix{
& \Pro(W_1)^{\times3} \ar[d]^{\pi_3}\\
\ov{Z}_{(3,2)} \ar[r]^{i} & \Pro(W_3)
}
\]
Let $\gamma$ be a class in $A^{\GL_3}_*  ( \ov{Z}_{(3,2)}
 )$, and let $\ov{\gamma}$ be a class in $A^*_{\GL_3}  (
\Pro(W_1)^{\times3}  )^{S_3}$ such that $6 i_*(\gamma)=\pi
_{3*}(\ov{\gamma})$. Then $\pi_{3*}  ( \ov{\gamma}  )
\in3 I_{\wt{Z}}$.
\end{lemma}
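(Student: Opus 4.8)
The plan is to expand $\ov{\gamma}$ in the $S_3$-invariant part of the module splitting $\psi$ for $A^*_{\GL_3}\bigl(\Pro(W_1)^{\times3}\bigr)$, and to observe that $\pi_{3*}$ of all but two of the relevant module generators automatically lands in $3\,I_{\wt Z}$; the two exceptional generators are $\xi_1\xi_2\xi_3$ and $\xi_1^2\xi_2^2\xi_3^2$, and for those the hypothesis $6\,i_*(\gamma)=\pi_{3*}(\ov{\gamma})$ will be needed. First, the splitting exact sequence for $A^*_{\GL_3}\bigl(\Pro(W_1)^{\times3}\bigr)$ used in this section is $S_3$-equivariant, because both the monomial basis $\{\xi_1^{v_1}\xi_2^{v_2}\xi_3^{v_3}\mid 0\le v_i\le 2\}$ of the free module $\psi\bigl(A^*_{\GL_3}(\Pro(W_1)^{\times3})\bigr)$ and the ideal $\bigl(P_{\{1\}}(y_1),P_{\{1\}}(y_2),P_{\{1\}}(y_3)\bigr)$ are stable under the permutation action. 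Hence $\ov{\gamma}$, being $S_3$-invariant, is an $A^*_{\GL_3}$-linear combination of the \emph{orbit sums} $\sigma_O:=\sum_{w\in O}\xi^w$, where $O$ runs over the ten $S_3$-orbits of monomials $\xi^v$ with $0\le v_i\le 2$.

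For each orbit $O$ with $|O|\ge 3$ we have $\pi_{3*}(\sigma_O)=|O|\,\pi_{3*}\bigl(\xi_1^{v_1}\xi_2^{v_2}\xi_3^{v_3}\bigr)$ for any $(v_1,v_2,v_3)\in O$, since $\pi_3$ is $S_3$-equivariant for the trivial action on $\ov{Z}_{(3,2)}$ (proof of Proposition~\ref{push.forward.pi3}); as $|O|\in\{3,6\}$ and $\pi_{3*}\bigl(\xi_1^{v_1}\xi_2^{v_2}\xi_3^{v_3}\bigr)\in I_{\wt Z}$ by Proposition~\ref{push.forward.pi3}, we get $\pi_{3*}(\sigma_O)\in 3\,I_{\wt Z}$. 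Moreover $\pi_{3*}(1)=6\delta_{(3,2)}\in 3\,I_{\wt Z}$ by Proposition~\ref{class.delta.3.2}. The only orbits of size $1$ with $v\neq 0$ are $\{(1,1,1)\}$ and $\{(2,2,2)\}$, so writing $\ov{\gamma}=a\,\xi_1\xi_2\xi_3+b\,\xi_1^2\xi_2^2\xi_3^2+(\text{combination of the other orbit sums})$ with $a,b\in A^*_{\GL_3}$, we obtain $\pi_{3*}(\ov{\gamma})\equiv a\,\pi_{3*}(\xi_1\xi_2\xi_3)+b\,\pi_{3*}(\xi_1^2\xi_2^2\xi_3^2)\pmod{3\,I_{\wt Z}}$.

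It remains to analyze the two classes $\pi_{3*}(\xi_1\xi_2\xi_3)$ and $\pi_{3*}(\xi_1^2\xi_2^2\xi_3^2)$. Using $\pi_3^*(h_3)=\xi_1+\xi_2+\xi_3$, the relations $\xi_i^3=c_1\xi_i^2-c_2\xi_i+c_3$, the projection formula, and the formulas of Proposition~\ref{push.forward.pi3} for $\pi_{3*}(\xi_1^2)$ and $\pi_{3*}(\xi_1\xi_2)$, one computes both pushforwards explicitly (a short Maple-assisted calculation). One finds that each of them lies in $I_{\wt Z}$ but, in general, not in $3\,I_{\wt Z}$, while $h_3$ times each of them does lie in $3\,I_{\wt Z}$: indeed $h_3\,\pi_{3*}(\xi_1\xi_2\xi_3)=\pi_{3*}\bigl((\xi_1+\xi_2+\xi_3)\xi_1\xi_2\xi_3\bigr)=3\,\pi_{3*}(\xi_1^2\xi_2\xi_3)$, and an analogous reduction (using $\xi_i^3=c_1\xi_i^2-c_2\xi_i+c_3$) handles $h_3\,\pi_{3*}(\xi_1^2\xi_2^2\xi_3^2)$.

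Finally we use the hypothesis. Since $A^*_{\GL_3}\bigl(\Pro(W_3)\bigr)$ is torsion free and $\pi_{3*}(\ov{\gamma})=6\,i_*(\gamma)$, the class $\pi_{3*}(\ov{\gamma})$ is divisible by $3$; combined with the congruence of the second paragraph, $a\,\pi_{3*}(\xi_1\xi_2\xi_3)+b\,\pi_{3*}(\xi_1^2\xi_2^2\xi_3^2)$ is divisible by $3$ in $A^*_{\GL_3}\bigl(\Pro(W_3)\bigr)$. Reducing modulo $3$ in the free $(\Z/3)[c_1,c_2,c_3]$-module $A^*_{\GL_3}\bigl(\Pro(W_3)\bigr)/3$ and inserting the explicit expressions of the third paragraph, one checks (again with Maple) that the reductions of $\pi_{3*}(\xi_1\xi_2\xi_3)$ and $\pi_{3*}(\xi_1^2\xi_2^2\xi_3^2)$ are linearly independent over $(\Z/3)[c_1,c_2,c_3]$; hence $a$ and $b$ are divisible by $3$, so $a\,\pi_{3*}(\xi_1\xi_2\xi_3)+b\,\pi_{3*}(\xi_1^2\xi_2^2\xi_3^2)=3\bigl(a'\,\pi_{3*}(\xi_1\xi_2\xi_3)+b'\,\pi_{3*}(\xi_1^2\xi_2^2\xi_3^2)\bigr)\in 3\,I_{\wt Z}$, using once more Proposition~\ref{push.forward.pi3}. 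Thus $\pi_{3*}(\ov{\gamma})\in 3\,I_{\wt Z}$. The main obstacle is exactly this last step: one must rule out that the two ``$3$-indivisible'' contributions conspire to cancel modulo $3\,I_{\wt Z}$ without $a$ and $b$ being divisible by $3$, and it is here that the assumption that $\ov{\gamma}$ comes from a genuine pushforward $6\,i_*(\gamma)$ — forcing $\pi_{3*}(\ov{\gamma})$ to be divisible by $3$, not merely by $2$ — is essential.
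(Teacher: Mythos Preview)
Your orbit decomposition and the handling of the orbit sums of size $\geq 3$ (and of the orbit $\{(0,0,0)\}$) are correct and match the paper. The gap is in your treatment of the orbit $\{(1,1,1)\}$.

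You assert that $\pi_{3*}(\xi_1\xi_2\xi_3)\notin 3\,I_{\wt Z}$ and that the mod-$3$ reductions of $\pi_{3*}(\xi_1\xi_2\xi_3)$ and $\pi_{3*}(\xi_1^2\xi_2^2\xi_3^2)$ are linearly independent over $(\Z/3)[c_1,c_2,c_3]$. Both claims are false. Already in the nonequivariant situation one has $\pi_{3*}(\xi_1\xi_2\xi_3)=6h_3^6$, so the leading $h_3^6$-coefficient is divisible by $3$; in fact the full equivariant class lies in $3\,I_{\wt Z}$, so its reduction modulo $3$ is zero and linear independence fails. The paper proves $\pi_{3*}(\xi_1\xi_2\xi_3)\in 3\,I_{\wt Z}$ by a short geometric argument you are missing: take $S_1\subset\Pro(W_1)^{\times 3}$ to be the locus where all three lines pass through a fixed $T$-invariant point, so that $[S_1]=(\xi_1+l_1)(\xi_2+l_1)(\xi_3+l_1)$; the map $\wt\pi_3\colon S_1\to\wt Z$ is generically $6:1$ onto its image, hence $\pi_{3*}[S_1]=6\beta$ with $\beta\in I_{\wt Z}$, and expanding $[S_1]$ gives $\pi_{3*}(\xi_1\xi_2\xi_3)=6\beta-3l_1\pi_{3*}(\xi_1\xi_2)-3l_1^2\pi_{3*}(\xi_1)-6l_1^3\delta_{(3,2)}\in 3\,I_{\wt Z}\,A^*_T$, whence $\pi_{3*}(\xi_1\xi_2\xi_3)\in 3\,I_{\wt Z}$ by Lemma~\ref{algebraic.lemma}. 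With this in hand, only the orbit $\{(2,2,2)\}$ actually requires the hypothesis $6\,i_*(\gamma)=\pi_{3*}(\ov\gamma)$, and the paper handles it exactly as you do for that single term: since $\pi_{3*}(\xi_1^2\xi_2^2\xi_3^2)$ is a monic degree-$9$ polynomial in $h_3$, divisibility of $a\,\pi_{3*}(\xi_1^2\xi_2^2\xi_3^2)$ by $3$ forces $a\in(3)$.

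Note also that even if you had correctly observed $\pi_{3*}(\xi_1\xi_2\xi_3)\equiv 0\pmod 3$, that alone (together with $\pi_{3*}(\xi_1\xi_2\xi_3)\in I_{\wt Z}$ from Proposition~\ref{push.forward.pi3}) does not give $\pi_{3*}(\xi_1\xi_2\xi_3)\in 3\,I_{\wt Z}$ without knowing that $A^*_{\GL_3}(\Pro(W_3))/I_{\wt Z}$ is $3$-torsion-free; the geometric $S_1$-argument sidesteps this. Your aside about $h_3\cdot\pi_{3*}(\xi_1\xi_2\xi_3)\in 3\,I_{\wt Z}$ is correct but is never used in your argument.
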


\begin{proof}
First of all, we show that if the degree of $\ov{\gamma} \in A^*_{\GL
_3}  ( \Pro(W_1)^{\times3}  )^{S_3}$ is less than 5 in
$\xi_1, \xi_2, \xi_3$, then $\pi_{3*}(\ov{\gamma}) \in3 I_{\wt
{Z}}$. Notice that $ ( A^*_{\GL_3} ( \Pro(W_1)^{\times3}
 )  )^{S_3} $ is generated by the symmetrization of the
classes $\xi^{v} :=\xi_1^{v_1}\xi_2^{v_2}\xi_3^{v_3}$ for some
integral vector $v=(v_1,v_2, v_3)$ such that $2 \geq v_1 \geq v_2 \geq
v_3 \geq0$. Let $\wh{\xi^v}$ be the symmetrization of $\xi^v$.
Arguing as in the proof of Proposition \ref{push.forward.pi3}, we get
\[
\pi_{3*} ( \wh{\xi^{v}} ) = \# \text{orb} ( \xi
^{v} ) \pi_{3*} ( \xi^{v} ) \in\# \text{orb} (
\xi^{v} ) I_{\wt{Z}},
\]
where $\# \text{orb}  ( \xi^{v} )$ is the cardinality of
the $S_3$-orbit of $\xi^v$. If the entries of $v$ are all different
(namely the case $v=(2,1,0)$), then $\# \text{orb}  ( \xi
^{v} )=6$. On the other hand, if two entries of $v$ are equal and
one different from the other two, then $\# \text{orb}  ( \xi
^{v} )=3$. Therefore we are reduced to check the two cases $\pi
_{3*} (1)$ and $\pi_{3*} (\xi_1\xi_2\xi_3)$.
\begin{itemize}

\item$\pi_{3*} (1) \in3I_{\wt{Z}}$.

Again, since $\pi_{3*}  ( 1  )=6\delta_{(3,2)}$, this case
is covered in Proposition \ref{class.delta.3.2}.

\item$\pi_{3*} (\xi_1\xi_2\xi_3) \in3I_{\wt{Z}}$.

Using Lemma \ref{algebraic.lemma}, we can restrict our computations to
$A^*_T$. Let us define $S_1 \subset\Pro(W_1)^{\times3}$ as the locus
where all three lines pass through $[1,0,0]$. We apply Lemma \ref
{hyperplane.T.classes} to get
\[\begin{aligned}
{[ S_1 ]}& = (\xi_1 + l_1) (\xi_2 +
l_1) (\xi_3 + l_1) \\&= \xi _1
\xi_2 \xi_3 + l_1 (\xi_1
\xi_2 + \xi_1 \xi_3 + \xi_2
\xi_3) + l_1^2(\xi_1 +
\xi_2 + \xi_3) + l_1^3.\end{aligned}
\]

Now, with reference to diagram (\ref{split.diagram}), we have that the
map $\wt{\pi}_3: S_1 \to\wt{Z}$ is generically $6:1$ on its image,
and therefore $\pi_{3*}[S_1] = 6 \beta$ where $\beta$ is a class in
$I_{\wt{Z}}$. Therefore we have
\[
\pi_{3*} (\xi_1 \xi_2 \xi_3) = 6
\beta- l_1 \pi_{3*} (\xi_1 \xi _2 +
\xi_1 \xi_3 + \xi_2 \xi_3) -
l_1^2\pi_{3*}(\xi_1 +
\xi_2 + \xi_3) - l_1^3
\delta_{(3,2)},
\]
and we know that the right-hand side is in $3I_{\wt{Z}}$.
\end{itemize}

Let us go back to prove Lemma \ref{3IZtilde}. Since $\ov{\gamma}$ is
a class in $A^*_{\GL_3}  ( \Pro(W_1)^{\times3}  )^{S_3}$,
we can write
\[
\ov{\gamma}= a \xi_1^2 \xi_2^2
\xi_3^2 + \mu,
\]
where $\mu$ has degree at most 5 in $\xi_1, \xi_2, \xi_3$, and $a
\in A^*_{\GL_3}$. Applying $\pi_{3*}$ to both sides, we get
\[
\pi_{3*}(\ov{\gamma}) = a \pi_{3*} \xi_1^2
\xi_2^2 \xi_3^2 + \pi
_{3*} \mu.
\]
We already know that $\pi_{3*} \mu\in3I_{\wt{Z}}$. On the other
hand, by hypothesis we have $6 i_*(\gamma)=\pi_{3*}(\ov{\gamma})$.
Consequently, we must have that $ a \pi_{3*} \xi_1^2 \xi_2^2 \xi
_3^2 \in(3)$. Now, since the class $\pi_{3*} \xi_1^2 \xi_2^2 \xi
_3^2$ is the pushforward of a dimension 0 class, it must be a homogeneous
class of degree 9 in $A^*_{\GL_3}(\Pro(W_4))$. Moreover, in the
nonequivariant case, we have $\pi_{3*} \xi_1^2 \xi_2^2 \xi
_3^2=h_3^9$, and therefore, in the equivariant case, $\pi_{3*} \xi
_1^2 \xi_2^2 \xi_3^2$ can be written as a monic polynomial in $h_3$
with coefficients in $A^*_{\GL_3}$. Since $A^*_{\GL_3}(\Pro(W_3))$
is a free $A^*_{\GL_3}$-module with basis $1, h_3, \dots, h_3^9$, we
must have $a \in(3)$. On the other hand, we know already that $\pi
_{3*} \xi_1^2 \xi_2^2 \xi_3^2 \in I_{\wt{Z}}$, and consequently $a
\pi_{3*} \xi_1^2 \xi_2^2 \xi_3^2 \in3 I_{\wt{Z}}$ and $\pi
_{3*}(\ov{\gamma}) \in3 I_{\wt{Z}}$.
\end{proof}

\begin{propos}\label{2gamma}
Let $\gamma$ be a class in $A^{\GL_3}_*  ( \ov{Z}_{(3,2)}
 )$. Then $2 i_*(\gamma) \in I_{\wt{Z}}$.
\end{propos}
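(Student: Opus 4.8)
The plan is to combine the two results just proved---Proposition~\ref{push.forward.pi3}, which says $\pi_{3*}(A_*^{\GL_3}(\Pro(W_1)^{\times 3})) \subseteq I_{\wt Z}$, and Lemma~\ref{3IZtilde}, which gives the stronger divisibility $\pi_{3*}(\ov\gamma) \in 3 I_{\wt Z}$ for symmetric $\ov\gamma$---together with the descent Lemma~\ref{pushforward.symmetric} applied to the map $\pi_3 : \Pro(W_1)^{\times 3} \to \ov Z_{(3,2)}$. Recall $\pi_3$ has degree $\deg(\pi_3) = 3! = 6$ onto its image, and it is finite and flat away from the locus where two of the three lines coincide, i.e.\ away from a closed subset whose image lies in $\ov Z_4$. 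Since we already know $i_*(A_*^{\GL_3}(\ov Z_4)) \subseteq I_{\wt Z}$ (from Corollary~\ref{tangent} and the basic principle), we may work modulo $\ov Z_4$.

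First I would apply Lemma~\ref{pushforward.symmetric} with $G = \GL_3$, $\Gamma = S_3$, $X = \Pro(W_1)^{\times 3}$, $Y = \ov Z_{(3,2)}$, $V = \ov Z_{(3,2)} \setminus \ov Z_4$ (or rather the open locus of three genuinely distinct lines), and $f = \pi_3$, which is finite flat of degree $6$ over $V$. This produces, for the given $\gamma \in A_*^{\GL_3}(\ov Z_{(3,2)})$, a class $\gamma' \in i_*(A_*^{\GL_3}(\ov Z_4))$ and a class $\ov\gamma \in A_*^{\GL_3}(\Pro(W_1)^{\times 3})^{S_3}$ with
\[
6(\gamma - \gamma') = \pi_{3*}(\ov\gamma).
\]
Pushing forward to $\Pro(W_3)$ via $i$, and using that $i \circ \pi_3 = \pi_3$ (abusing notation for the map to $\Pro(W_3)$), we get $6\, i_*(\gamma) = \pi_{3*}(\ov\gamma) + 6\, i_*(\gamma')$. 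By Lemma~\ref{3IZtilde}, $\pi_{3*}(\ov\gamma) \in 3 I_{\wt Z}$, and $i_*(\gamma') \in I_{\wt Z}$ by the reduction to $\ov Z_4$; hence $6\, i_*(\gamma) \in 3 I_{\wt Z}$, so $3\bigl(2\, i_*(\gamma)\bigr) \in 3 I_{\wt Z}$. Since $A^*_{\GL_3}(\Pro(W_3))$ is torsion free (being a free $A^*_{\GL_3}$-module), we may cancel the $3$ and conclude $2\, i_*(\gamma) \in I_{\wt Z}$.

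The one point requiring care---and the main obstacle---is verifying that $\pi_3$ satisfies the flatness-and-constant-degree hypothesis of Lemma~\ref{pushforward.symmetric} over the appropriate open set, and identifying that open set precisely so that its complement pushes into $\ov Z_4$ (where we already have control). The map $\pi_3$ sends $([l_1],[l_2],[l_3])$ to $[l_1 l_2 l_3]$; over the locus of triples of pairwise distinct lines it is a $6$-sheeted étale cover of $Z_{(3,2)}$, but we need the finite-flat-of-constant-degree statement over a \emph{closed} complement inside $\ov Z_{(3,2)}$, so some attention is needed to whether $\pi_3$ remains finite (not just quasi-finite) over the boundary of this locus---in fact the three-lines-through-a-point stratum $Z_4$ and its degenerations, where the morphism has smaller-degree or positive-dimensional fibers, is exactly what must be excised, and its image is $\ov Z_4$ by the remarks in Section~\ref{stratification}. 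Once the stratum $\ov Z_4$ is removed, $\pi_3$ restricted to triples of distinct lines is finite flat of degree $6$, and the rest is the formal chase above.
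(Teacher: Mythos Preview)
Your argument is correct and essentially identical to the paper's: apply Lemma~\ref{pushforward.symmetric} to $\pi_3$ to write $6i_*(\gamma) = \pi_{3*}(\ov\gamma) + 6i_*(\gamma')$ with $\ov\gamma$ symmetric and $\gamma'$ supported on $\ov Z_4$, invoke Lemma~\ref{3IZtilde} to get $\pi_{3*}(\ov\gamma)\in 3I_{\wt Z}$, and then cancel the factor of $3$ using that $A^*_{\GL_3}(\Pro(W_3))$ is torsion-free (the paper phrases this as ``integral domain''). One small correction to your closing discussion: over $Z_4$ the three lines are still pairwise distinct, so $\pi_3$ remains finite \'etale of degree $6$ there; the locus where $\pi_3$ fails to be finite flat of degree $6$ is only $\ov Z_5\subseteq\ov Z_4$, but since $\ov Z_5\subseteq\ov Z_4$ your choice of excising $\ov Z_4$ is harmless.
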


\begin{proof}
Let $\gamma$ be a class in $A^{\GL_3}_*  ( \ov{Z}_{(3,2)}
 )$. As a consequence of Lemma \ref{pushforward.symmetric},
there exist two classes $\gamma' \in i_* A^{\GL_3}_*  ( \ov
{Z}_{4}  ) \subset A^{\GL_3}_*  ( \ov{Z}_{(3,2)} )$
and $\ov{\gamma} \in A^*_{\GL_3}  ( \Pro(W_1)^{\times3}
 )^{S_3}$ such that
\begin{equation}
\label{equation.int.dom} 6 i_*(\gamma) = \pi_{3*} \ov{\gamma} + 6 i_*(
\gamma') \in A^*_{\GL
_3} ( \Pro(W_3) ).
\end{equation}
Now, from Lemma \ref{3IZtilde} we have that $\pi_{3*} \ov{\gamma}$
is three times a class $\beta$ in $I_{\wt{Z}}$. On the other hand, we
already know that $i_*(\gamma') \in I_{\wt{Z}}$. Since the ring
$A^*_{\GL_3}  ( \Pro(W_3)  )$ is an integral domain,
simplifying equation (\ref{equation.int.dom}), we get
\[
2 i_*(\gamma) = \beta+ 2 i_*(\gamma') \in I_{\wt{Z}}.
\]
%
%\upqed
\end{proof}

\begin{corol}\label{gamma.in.IZtilde}
Let $\gamma$ be a class in $A^{\GL_3}_*  ( \ov{Z}_{(3,2)}
 )$. Then $i_*(\gamma) \in I_{\wt{Z}}$.
\end{corol}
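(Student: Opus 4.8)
The plan is to combine Proposition~\ref{2gamma}, which already gives $2\,i_*(\gamma)\in I_{\wt Z}$, with a complementary argument showing $3\,i_*(\gamma)\in I_{\wt Z}$; since $\gcd(2,3)=1$ this immediately yields
\[
i_*(\gamma)=3\,i_*(\gamma)-2\,i_*(\gamma)\in I_{\wt Z}.
\]
The factor~$3$ comes from the fact that a cubic consisting of three lines in general position carries \emph{three} distinguished singular points, its pairwise intersection points. Concretely, set $\wt Z_{(3,2)}:=\wt Z\cap\bigl(Z_{(3,2)}\times\Pro(E)\bigr)$, so that $\pi_1\colon\wt Z_{(3,2)}\to Z_{(3,2)}$, $(C,p)\mapsto C$, is the restriction of the universal family over the open stratum. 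The first thing I would check is that this map is finite flat of degree~$3$ (indeed finite \'etale): over a point of $Z_{(3,2)}$ the three vertices are pairwise distinct, and since $\Char(k)>3$ each of them is an ordinary node, so the partial derivatives of $F$ cut $\wt Z_{(3,2)}$ out scheme-theoretically as three reduced points in each fibre.

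Granting this, I would prove $3\,i_*(\gamma)\in I_{\wt Z}$ for every $\gamma\in A^{\GL_3}_*(\ov Z_{(3,2)})$ as follows. We have $\ov Z_{(3,2)}\setminus Z_{(3,2)}=\ov Z_4$ and $i_*\bigl(A^{\GL_3}_*(\ov Z_4)\bigr)\subseteq I_{\wt Z}$ (a consequence of Corollary~\ref{tangent}, since $\ov Z_4\subseteq\ov Z_{(3,1)}$), so by the argument of Section~\ref{basic.principle}, applied with the single auxiliary closed subset $\ov Z_4$ and with $3\,i_*(-)$ in place of $i_*(-)$, the claim reduces to showing that on the open set $V:=\Pro(W_3)\setminus\ov Z_4$ one has $3\,(i_V)_*(\gamma)\in j^*(I_{\wt Z})$ for every $\gamma\in A^{\GL_3}_*(Z_{(3,2)})$, where now $i_V\colon Z_{(3,2)}\hookrightarrow V$ is a closed immersion. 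Put $\wt Z_V:=\pi_1^{-1}(V)\subseteq\wt Z$; then $\pi_1\colon\wt Z_V\to V$ is proper, $\wt Z_{(3,2)}=\wt Z_V\times_V Z_{(3,2)}$, and there is a Cartesian square
\[
\xymatrix{
\wt Z_{(3,2)} \ar[r] \ar[d]_{\pi_1} & \wt Z_V \ar[d]^{\pi_1}\\
Z_{(3,2)} \ar[r]^{i_V} & V
}
\]
with both horizontal maps closed immersions. Since the left-hand vertical map is finite flat of degree~$3$ we get $\pi_{1*}\pi_1^*\gamma=3\gamma$, and commutativity of the square shows that $3\,(i_V)_*(\gamma)$ lies in the image of $\pi_{1*}\colon A^{\GL_3}_*(\wt Z_V)\to A^*_{\GL_3}(V)$. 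Finally $\wt Z_V$ is open in $\wt Z$, so $A^{\GL_3}_*(\wt Z)\twoheadrightarrow A^{\GL_3}_*(\wt Z_V)$, and by flat base change this surjection intertwines the two pushforwards to $A^*_{\GL_3}(\Pro(W_3))$ and to $A^*_{\GL_3}(V)$; hence that image equals $j^*\bigl(\im(\pi_{1*})\bigr)=j^*(I_{\wt Z})$, which is what is needed.

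I do not expect a serious obstacle here, since the substantive content has already been absorbed into Proposition~\ref{2gamma}; the corollary is essentially a coprimality remark. The two points requiring care are (i) verifying that the three nodes of a generic triangle of lines genuinely make $\wt Z_{(3,2)}\to Z_{(3,2)}$ finite flat of degree~$3$ — this is exactly where the hypothesis $\Char(k)>3$ enters — and (ii) checking that $\ov Z_{(3,2)}\setminus Z_{(3,2)}=\ov Z_4$, so that the inductive bookkeeping of the basic principle involves only the one auxiliary stratum $\ov Z_4$, whose contribution already lies in $I_{\wt Z}$ by Corollary~\ref{tangent}.
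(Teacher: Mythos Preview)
Your proposal is correct and follows essentially the same route as the paper: combine Proposition~\ref{2gamma} with the fact that $\pi_1:\wt Z|_{Z_{(3,2)}}\to Z_{(3,2)}$ is a degree-$3$ cover to obtain $3\,i_*(\gamma)\in I_{\wt Z}$, then use $\gcd(2,3)=1$. The only difference is cosmetic: where you spell out the Cartesian square, the identity $\pi_{1*}\pi_1^*=3$, and the base-change to the open set $V$, the paper simply invokes Lemma~\ref{pushforward.symmetric} with $\Gamma$ trivial (and uses, as you do, that the boundary $\ov Z_{(3,2)}\setminus Z_{(3,2)}=\ov Z_4$ is already handled by Corollary~\ref{tangent}).
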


\begin{proof}

Let $\gamma$ be a class in $A^{\GL_3}_*  ( \ov{Z}_{(3,2)}
 )$. Since the restriction map
\[
 \wt{Z} \rrvert _{Z_{(3,2)}} \xrightarrow{\pi_1}
Z_{(3,2)}
\]
is a finite covering of order 3, applying Lemma \ref
{pushforward.symmetric} (setting $\Gamma$ to be the trivial group), we
get that $3 i_* (\gamma)$ is in the ideal of the alpha classes.
Moreover, by Proposition \ref{2gamma}, $2i_*(\gamma) \in I_{\wt
{Z}}$. Therefore $\gamma=3 i_*(\gamma)-2 i_*(\gamma) \in I_{\wt{Z}}$.
\end{proof}

%%%%%%%%%%%%%%%%%%%%%%%%%%%%%%%%%%%%%%%%%%%%%%%%%%%%%%%%%%%%%%%%%%%%%%
\subsection{The Ideal $i_*  ( A^{\GL_3}_*  ( \ov
{Z}_{2} ) )$ Is Contained in $ ( \alpha_1, \alpha_2,
\alpha_3, \delta_{2}  )$}\label{Z2}
%%%%%%%%%%%%%%%%%%%%%%%%%%%%%%%%%%%%%%%%%%%%%%%%%%%%%%%%%%%%%%%%%%%%%%

Let us consider the map
\[
\Pro(W_1) \times\Pro(W_2) \xrightarrow{
\pi_2} \ov{Z}_{2}.
\]
We denote by $h_1$ and $h_2$ two hyperplane classes corresponding to
the pull--back of hyperplane classes through the two projections:
\[
\xymatrix{
& \Pro(W_1) \times\Pro(W_2) \ar[dl]_{\pr_1} \ar[dr]^{\pr_2} & \\
\Pro(W_1) & & \Pro(W_2)
}
\]
By arguing as in Proposition \ref{prod.Wd.E}, we have a splitting
exact sequence of $A^*_{\GL_3}$-modules
\[
\xymatrix@R=10pt{
0 \ar[r] &  (P_{\{1\}}(x), P_{[2]}(y)  ) \\\ar[r] & A^*_{\GL
_3}[x,y] \ar[rr]_(.45){\hspace*{-6pt}\ev_{(h_1,h_2)}} & &  A^*_{\GL_3}
(\Pro(W_1) \times\Pro(W_2)  ) \ar[r] \ar@/_1pc/[ll]_{\psi}
& 0.
}
\]

First, let us determine the class $\delta_{2}$.

\begin{propos}\label{class.delta.2} We have the identity
\begin{equation}
\delta_2=21h_3^2 - 42h_3c_1
+ 9c_2 + 18c_1^2.
\end{equation}
Moreover, the classes $\alpha_1, \alpha_2, \alpha_3, \delta_2$ are
a set of independent generators for the ideal $ ( \alpha_1,
\alpha_2, \alpha_3, \delta_{2}  )$.
\end{propos}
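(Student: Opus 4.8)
The plan is to compute $\delta_2 = \delta_{\{1,2\}}$ by specializing the general formula \eqref{delta.classes.formula} from Theorem~\ref{delta.classes} to the partition $\mu = \{1,2\}$ of $d=3$ with $n=3$. Here $\deg(\pi_{\{1,2\}}) = \mu(1)!\,\mu(2)! = 1$, so there is no denominator correction. The sum runs over pairs $(v_1,v_2) \in \N^3(1) \times \N^3(2)$: the three standard basis vectors $e_1,e_2,e_3$ for $v_1$, and the six vectors of $\N^3(2)$ for $v_2$, giving $18$ terms. For each pair I would first form $v_1 + v_2 \in \N^3(3)$, write down the numerator $\prod_{v \in \N^3(3),\, v \neq v_1+v_2}(h_3 + v\cdot l)$ (a product of nine linear factors), and the denominator $\bigl(\prod_{v \in \N^3(1),\, v\neq v_1}(v-v_1)\cdot l\bigr)\cdot\bigl(\prod_{v \in \N^3(2),\, v\neq v_2}(v-v_2)\cdot l\bigr)$, which is a product of differences of the $l_i$ analogous to the preliminary computations in the proof of Proposition~\ref{class.delta.3.2}. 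Summing all eighteen rational expressions in $\Q(l_1,l_2,l_3)[h_3]$ and clearing denominators, the result is a symmetric polynomial in $l_1,l_2,l_3$ (hence expressible via $c_1,c_2,c_3$) and a polynomial of degree~$2$ in $h_3$, since $\dim W_2 = 6$ and so the pushforward of a codimension-one locus from $W_{\{1,2\}}$ drops degree appropriately. This computation is the analog of the "straightforward computations" invoked in the proof of Proposition~\ref{class.delta.3.2}; it is best carried out with a computer algebra system (Maple, as noted in the introduction), and yields $\delta_2 = 21h_3^2 - 42h_3 c_1 + 9c_2 + 18c_1^2$.

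For the second assertion, that $\alpha_1,\alpha_2,\alpha_3,\delta_2$ form an independent set of generators for the ideal they generate, I would argue by comparing leading terms with respect to a suitable monomial order on $\Z[c_1,c_2,c_3,h_3]$, or more simply by observing their shape in low degrees. The ring $A^*_{\GL_3}(\Pro(W_3)) = \Z[c_1,c_2,c_3,h_3]/(P_{[3]}(h_3))$ is graded, and by "independent generators" one means that none of the four is redundant, i.e. no one of them lies in the ideal generated by the other three. Here $\alpha_1$ has degree~$1$, $\alpha_2$ and $\delta_2$ have degree~$2$, and $\alpha_3$ has degree~$3$. Since $\alpha_1$ is the only generator in degree~$1$, it cannot be in the ideal generated by the others (everything in that ideal has degree~$\geq 2$). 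In degree~$2$ the ideal part coming from $\alpha_1$ is $\alpha_1 \cdot A^1 = 12(h_3-c_1)\cdot\langle h_3, c_1\rangle$, which is spanned over $\Z$ by $12(h_3^2 - h_3 c_1)$ and $12(h_3 c_1 - c_1^2)$; one checks directly that neither $\alpha_2 = 6h_3^2 - 4h_3c_1 - 6c_2$ nor $\delta_2 = 21h_3^2 - 42h_3c_1 + 9c_2 + 18c_1^2$ lies in this $\Z$-span (the coefficient of $c_2$ already rules this out, as $c_2$ does not appear in multiples of $\alpha_1$), so $\alpha_2 \notin (\alpha_1,\alpha_3,\delta_2)$ and $\delta_2 \notin (\alpha_1,\alpha_2,\alpha_3)$ — the latter being precisely the non-redundancy statement that forces us to add $\delta_2$ in the first place. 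Finally $\alpha_3 \notin (\alpha_1,\alpha_2,\delta_2)$ because in degree~$3$ the monomial $c_3$ appears in $\alpha_3$ with coefficient $-9$ but cannot be produced from $\alpha_1,\alpha_2,\delta_2$ by multiplication by degree-$1$ and degree-$2$ classes (none of $\alpha_1,\alpha_2,\delta_2$ involves $c_3$, and $A^1 = \langle h_3, c_1\rangle$, $A^2$ contains $c_2$ but not $c_3$).

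The main obstacle is the bookkeeping in the eighteen-term localization sum: each summand is a ratio of a nine-fold product of linear forms in $h_3,l_1,l_2,l_3$ by a product of six differences $l_i - l_j$, and one must verify that the poles along the hyperplanes $l_i = l_j$ genuinely cancel and that the $h_3$-degree collapses from $9$ down to $2$ after using the relation $P_{[3]}(h_3) = 0$. This is purely mechanical but error-prone by hand, which is why we delegate it to Maple; conceptually it is identical to the $\delta_{(3,2)}$ computation already carried out in Proposition~\ref{class.delta.3.2}. The independence verification, by contrast, is elementary once the degrees and the appearance of $c_2$ and $c_3$ are tracked carefully.
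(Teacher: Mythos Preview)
Your plan for the first assertion is exactly the paper's approach: apply Theorem~\ref{delta.classes} with $\mu=\{1,2\}$, $d=n=3$, $\deg(\pi_\mu)=1$, sum the eighteen localization terms, and let a computer algebra system do the simplification. The paper even records the nine tangent-space Chern classes $c^T_{\mathrm{top}}(T_{Q_v}\Pro(W_1))$ and $c^T_{\mathrm{top}}(T_{Q_v}\Pro(W_2))$ as ``preliminary computations'' before invoking ``straightforward computations,'' just as you describe.

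For the independence assertion your strategy is sound in outline, but the written argument has a gap. You show that neither $\alpha_2$ nor $\delta_2$ lies in $\alpha_1\cdot A^1$ (because of the $c_2$ term), and from this you conclude $\alpha_2\notin(\alpha_1,\alpha_3,\delta_2)$ and $\delta_2\notin(\alpha_1,\alpha_2,\alpha_3)$. That inference is not valid: in degree~$2$ the ideal $(\alpha_1,\alpha_2,\alpha_3)$ is $\alpha_1\cdot A^1 + \Z\alpha_2$, not just $\alpha_1\cdot A^1$, and $\alpha_2$ \emph{does} contain $c_2$. You must still rule out $\delta_2 = a\alpha_2 + (\text{mult.\ of }\alpha_1)$ with $a\in\Z$; comparing $c_2$-coefficients gives $9=-6a$, which has no integer solution, and symmetrically $-6=9a$ fails for $\alpha_2$ in terms of $\delta_2$. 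So your method works once this extra step is inserted.

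The paper handles this more cleanly by reducing modulo~$2$ and modulo~$3$: mod~$2$ one has $\alpha_1\equiv\alpha_2\equiv 0$ while $\delta_2\equiv h_3^2+c_2\not\equiv 0$, so $\delta_2\notin(\alpha_1,\alpha_2)$; mod~$3$ one has $\alpha_1\equiv\delta_2\equiv 0$ while $\alpha_2\equiv -h_3c_1\not\equiv 0$, so $\alpha_2\notin(\alpha_1,\delta_2)$. This bypasses any coefficient-matching over~$\Z$ and is worth adopting. Your treatment of $\alpha_1$ (degree reasons) and $\alpha_3$ (the $c_3$ term) is fine and matches what the paper leaves implicit.
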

\begin{proof} We refer to Section~\ref{universal-locus}. Since $\delta
_{2}=\delta_{ \{ 1,2\} }$, we evaluate formula (\ref
{delta.classes.formula}) for $\mu=\{ 1,2\}$ and $d=3$.

By preliminary computations we get
\begin{eqnarray*}
c^T_{\mathrm{top}} ( T_{Q_{(1,0,0)}} \Pro (W_1 ) )
&=& (l_2 - l_1) (l_3 - l_1),
\\
c^T_{\mathrm{top}} ( T_{Q_{(0,1,0)}} \Pro (W_1 ) )
&=& (l_1 - l_2) (l_3 - l_2),
\\
c^T_{\mathrm{top}} ( T_{Q_{(0,0,1)}} \Pro (W_1 ) )
&=& (l_1 - l_3) (l_2 - l_3),
\\
c^T_{\mathrm{top}} ( T_{Q_{(2,0,0)}} \Pro (W_2 ) )
&=& 4(l_2 - l_1)^2 (l_3 -
l_1)^2 (l_2 + l_3 - 2
l_1),
\\
c^T_{\mathrm{top}} ( T_{Q_{(0,2,0)}} \Pro (W_2 ) )
&=& 4(l_1 - l_2)^2 (l_3 -
l_2)^2 (l_1 + l_3 - 2
l_2),
\\
c^T_{\mathrm{top}} ( T_{Q_{(0,0,2)}} \Pro (W_2 ) )
&=& 4(l_1 - l_3)^2 (l_2 -
l_3)^2 (l_1 + l_2 - 2
l_3),
\\
c^T_{\mathrm{top}} ( T_{Q_{(1,1,0)}} \Pro (W_2 ) )
&=& -(l_1 - l_2)^2 (2l_3 -
l_1 - l_2) (l_3 - l_1)
(l_3 - l_2),
\\
c^T_{\mathrm{top}} ( T_{Q_{(1,0,1)}} \Pro (W_2 ) )
&=& -(l_1 - l_3)^2 (2l_2 -
l_1 - l_3) (l_2 - l_1)
(l_2 - l_3),
\\
c^T_{\mathrm{top}} ( T_{Q_{(0,1,1)}} \Pro (W_2 ) )
&=& -(l_2 - l_3)^2 (2l_1 -
l_2 - l_3) (l_1 - l_2)
(l_1 - l_3).
\end{eqnarray*}

Straightforward computations show the desired identity
\[
\delta_2= 21h_3^2 - 42h_3c_1
+ 9c_2 + 18c_1^2.
\]
Now, our goal is to prove that the classes $\alpha_1, \alpha_2,
\alpha_3, \delta_2$ are a set of independent generators for the ideal
$ ( \alpha_1, \alpha_2, \alpha_3, \delta_{2}  )$.

Notice that it suffices to consider the homogeneous ideal $(\alpha
_1,\alpha_2, \alpha_3, \delta_2)$ up to degree two. By working mod 2
we see that $\delta_2$ is not in the ideal $(\alpha_1,\alpha_2)$. On
the other hand, considering the classes mod 3, we see that $\alpha_2$
is not in the ideal $(\alpha_1,\delta_2)$.
\end{proof}

\begin{rmk}
From the identity
\[
2 \delta_2 = (5h_3 - 3 c_1)
\alpha_1 - 3 \alpha_2
\]
we have that $2 \delta_2 \in I_{\wt{Z}}$.
\end{rmk}

\begin{propos} We have the inclusion
\[
i_* ( A^{\GL_3}_* ( \ov{Z}_2 ) ) \subset (
\alpha_1, \alpha_2, \alpha_3,
\delta_{2} ).
\]
\end{propos}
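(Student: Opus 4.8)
The plan is to realize every such class via the product map $\pi_2\colon \Pro(W_1)\times\Pro(W_2)\to\ov{Z}_2$, cut the problem down to finitely many explicit pushforwards, and evaluate them with the localization formula. Write $i\colon \ov{Z}_2\to\Pro(W_3)$ for the inclusion, and recall $\ov{Z}_2\setminus Z_2=\ov{Z}_{(3,1)}\cup\ov{Z}_{(3,2)}$. By Corollary~\ref{tangent} and Corollary~\ref{gamma.in.IZtilde} we already know that $i_*\bigl(A_*^{\GL_3}(\ov{Z}_{(3,1)})\bigr)$ and $i_*\bigl(A_*^{\GL_3}(\ov{Z}_{(3,2)})\bigr)$ are contained in $I_{\wt{Z}}\subseteq(\alpha_1,\alpha_2,\alpha_3)$, so by the basic principle of Section~\ref{basic.principle} it is enough to control classes in $A_*^{\GL_3}(Z_2)$ modulo classes supported on $\ov{Z}_{(3,1)}\cup\ov{Z}_{(3,2)}$. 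Since a reducible plane cubic having a smooth conic as a component admits a unique conic-plus-line decomposition, $\pi_2$ restricts to an isomorphism over $Z_2$; hence, by the localization exact sequence, every $\gamma\in A_*^{\GL_3}(\ov{Z}_2)$ can be written as $\pi_{2*}(\eta)$ plus a class pushed forward from $\ov{Z}_{(3,1)}\cup\ov{Z}_{(3,2)}$, for a suitable $\eta\in A_*^{\GL_3}(\Pro(W_1)\times\Pro(W_2))$. It therefore suffices to prove
\[
(i\circ\pi_2)_*\bigl(A_*^{\GL_3}(\Pro(W_1)\times\Pro(W_2))\bigr)\subseteq(\alpha_1,\alpha_2,\alpha_3,\delta_2),
\]
the boundary corrections being already in $(\alpha_1,\alpha_2,\alpha_3)$.

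Next I would reduce this inclusion to three classes. By the splitting exact sequence above, $A^*_{\GL_3}(\Pro(W_1)\times\Pro(W_2))$ is free over $A^*_{\GL_3}$ on the monomials $h_1^ah_2^b$ with $0\le a\le 2$ and $0\le b\le 5$, and $\pi_2^*h_3=h_1+h_2$. Substituting $h_2=\pi_2^*h_3-h_1$, expanding, applying the projection formula, and reducing powers of $h_1$ via the relation $P_{\{1\}}(h_1)=0$, that is $h_1^3=c_1h_1^2-c_2h_1+c_3$, one finds that $(i\circ\pi_2)_*(h_1^ah_2^b)$ always lies in the $A^*_{\GL_3}(\Pro(W_3))$-submodule generated by $(i\circ\pi_2)_*(1)$, $(i\circ\pi_2)_*(h_1)$ and $(i\circ\pi_2)_*(h_1^2)$. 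Thus the ideal on the left of the displayed inclusion is generated by these three classes. The first is $\delta_2$ by Theorem~\ref{delta.classes} (here $\deg\pi_{\{1,2\}}=1$), and $\delta_2$ is one of our generators, so only $(i\circ\pi_2)_*(h_1)$ and $(i\circ\pi_2)_*(h_1^2)$ remain to be dealt with.

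To compute these two classes I would apply the explicit localization formula (Theorem~\ref{explicit.localization}), proceeding exactly as in the proof of Proposition~\ref{class.delta.2} but carrying the extra factor along: the $T$-fixed points of $\Pro(W_1)\times\Pro(W_2)$ are the eighteen pairs $(Q_u,Q_v)$ with $u\in\N^3(1)$ and $v\in\N^3(2)$, one has $i^*_{(Q_u,Q_v)}(h_1)=-u\cdot l$, the normal-bundle contribution is the product of the top Chern classes already recorded in the proof of Proposition~\ref{class.delta.2}, and $\pi_2(Q_u,Q_v)=Q_{u+v}$, so $\pi_{2*}$ sends the fixed-point class to $[Q_{u+v}]$ computed in Remark~\ref{classes.fixed.points}. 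After clearing denominators and symmetrizing this produces explicit polynomials in $c_1,c_2,c_3,h_3$ of degrees $3$ and $4$; a direct computation (done with Maple) then exhibits each of them as an $A^*_{\GL_3}$-linear combination of $\alpha_1,\alpha_2,\alpha_3,\delta_2$, which finishes the argument.

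The step I expect to be the real obstacle is this last one. The localization sum is unwieldy, and, more importantly, the inclusion cannot be verified rationally: over $\Q$ the ideal $(\alpha_1,\alpha_2,\alpha_3)$ already equals $I_Z\otimes\Q$, so one must check integrally that the single generator $\delta_2$ is exactly what is needed to absorb the factor of $2$ that appears when $(i\circ\pi_2)_*(h_1)$ and $(i\circ\pi_2)_*(h_1^2)$ are written in terms of the $\alpha_i$ (compare the identity $2\delta_2=(5h_3-3c_1)\alpha_1-3\alpha_2$). A more routine but still necessary point is to make sure the correction terms produced by the reduction to $Z_2$ are genuinely supported on $\ov{Z}_{(3,1)}\cup\ov{Z}_{(3,2)}$, so that the previously established inclusions apply to them.
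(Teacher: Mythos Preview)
Your reduction is essentially identical to the paper's: restrict to $Z_{2}$ using the already-known inclusions for $\ov{Z}_{(3,1)}$ and $\ov{Z}_{(3,2)}$, observe that $\pi_{2}$ is an isomorphism over $Z_{2}$, and use $\pi_{2}^{*}h_{3}=h_{1}+h_{2}$ together with the projection formula to reduce to checking that $\pi_{2*}(1)=\delta_{2}$, $\pi_{2*}(h_{1})$ and $\pi_{2*}(h_{1}^{2})$ lie in the ideal. Where you diverge is in the treatment of the last two classes. You propose to compute them outright by torus localization over the eighteen fixed points and then verify membership in $(\alpha_{1},\alpha_{2},\alpha_{3},\delta_{2})$ by machine. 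This is correct in principle and will succeed, but it is a blind check that does not explain \emph{why} exactly one copy of $\delta_{2}$ suffices.

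The paper avoids this brute-force step with a geometric trick: it introduces the incidence locus $S_{2}\subset\Pro(W_{1})\times\Pro(W_{2})\times\Pro(E)$ of triples (line, conic, point) with the point on both curves, so that $[S_{2}]=(h_{1}+t)(h_{2}+2t)$ and $\wt{\pi}_{2}(S_{2})\subset\wt{Z}$. Pushing down $t\cdot[S_{2}]$ along the projection $\sigma_{2}$ gives $2h_{1}+h_{2}-2c_{1}$, and hence $2\pi_{2*}(h_{1})+\pi_{2*}(h_{2})-2c_{1}\delta_{2}\in I_{\wt{Z}}$; combining this with $\pi_{2*}(h_{1})+\pi_{2*}(h_{2})=h_{3}\delta_{2}$ one solves a $2\times2$ linear system and reads off $\pi_{2*}(h_{1})\in(\alpha_{1},\alpha_{2},\alpha_{3},\delta_{2})$ directly, with the $\delta_{2}$-contribution visible as $(2c_{1}-h_{3})\delta_{2}$. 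A second push with $h_{2}t\cdot[S_{2}]$ handles $\pi_{2*}(h_{1}^{2})$ the same way. So the paper's argument is shorter, needs no localization sum, and makes transparent the mechanism you flagged as the obstacle: the extra relation beyond $I_{\wt{Z}}$ enters only through the explicit multiples of $\delta_{2}$ appearing in the linear system, not through any delicate integral cancellation.
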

\begin{proof}
Since we already know that $i_* ( A^G_*  ( \ov{Z}_3
) )$ is contained in $I_{\wt{Z}}$, we can restrict ourselves to
$Z_2$. First, notice that the restriction map
\[
U_2:= \Pro(W_1) \times\Pro(W_2)
\rrvert _{Z_2} \xrightarrow {\pi_2} Z_2
\]
is an isomorphism, and therefore $\pi_{2*}$ is an isomorphism of
$A^*_{\GL_3}$-modules. Consequently, it suffices to check that the
pushforwards of classes in $A^*_{\GL_3}(U_2)$ are contained in the
restriction of $ ( \alpha_1, \alpha_2, \alpha_3, \delta_{2}
 )$ to $Z_2$. Therefore, it suffices to show that
\[
\pi_{2*} ( A^*_{\GL_3} ( \Pro(W_1) \times
\Pro(W_2) ) ) \subset ( \alpha_1, \alpha_2,
\alpha_3, \delta_{2} ).
\]

We denote by $h_1$ and $h_2$ two hyperplane classes corresponding to
the pullback of hyperplane classes through two projections from $\Pro
(W_1) \times\Pro(W_2)$ to $\Pro(W_1)$ and to $\Pro(W_2)$. By
arguing as in Proposition \ref{prod.Wd.E}, we have a splitting exact
sequence of $A^*_{\GL_3}$-modules
\[
\xymatrix@R=10pt{
0 \ar[r] &  (P_{\{1\}}(x), P_{[2]}(y)  ) \\\ar[r] & A^*_{\GL
_3}[x,y] \ar[rr]_(.45){\hspace*{-6pt}\ev_{(h_1,h_2)}} & &  A^*_{\GL_3}
(\Pro(W_1) \times\Pro(W_2)  ) \ar[r] \ar@/_1pc/[ll]_{\psi}
& 0.
}
\]

The free $A^*_{\GL_3}$-module $\psi ( A^*_{\mathrm{GL}_3}  ( \Pro
(W_1) \times\Pro(W_2)  ) )$ is generated by the monomial
$x^{v_1}y^{v_2}$ such that $0 \leq v_1 \leq2$ and $0 \leq v_2 \leq5$.
Moreover, $\pi_2^*  ( h_3  ) = h_1 + h_2$, and by the
push--pull formula it suffices to evaluate $\pi_{2*}  ( h_1
 )$ and $\pi_{2*}  ( h_1^2  )$. We also have the identity
\begin{equation}
\label{h1.plus.h2} \pi_{2*} (h_1) + \pi_{2*}(h_2)
= h_3 \delta_2.
\end{equation}

Now, consider the following commutative diagram:
\[
\xymatrix{
\Pro(W_1) \times\Pro(W_2) \times\Pro(E) \ar[r]^-{\wt{\pi
}_2} \ar[d]^{\sigma_2} & \Pro(W_3) \times\Pro(E) \ar[d]^{\pi_1}\\
\Pro(W_1) \times\Pro(W_2) \ar[r]^-{\pi_2} & \Pro(W_3)
}
\]
where $\sigma_2$ is the natural projection, and $\wt{\pi}_2$ is the
lifting of $\pi_2$. We also have a splitting exact sequence of
$A^*_{\GL_3}$-modules
\[
\xymatrix@=10pt{
0 \ar[r] &  (P_{[1]}(x), P_{[2]}(y), P_{[1]}(-z)  ) \\\ar[r]
& A^*_{\GL_3}[x,y,z] \ar[rr]_(.37){\hspace*{6pt}\ev_{(h_1,h_2,t)}}  & &
A^*_{\GL_3}  (\Pro(W_1) \times\Pro(W_2) \times\Pro(E)
) \ar[r] \ar@/_1pc/[ll]_{\psi} & 0.
}
\]

Let $S_2 \subset\Pro(W_1) \times\Pro(W_2) \times\Pro(E)$ be the
locus of points of intersection between a linear form and a quadratic
form. Then $S_2$ is the complete intersection of the hypersurfaces
given by the equations
\begin{eqnarray*}
\sum_{v \in\N^3(1)} a_v X^v &=& 0,
\\
\sum_{v \in\N^3(2)} a_v X^v &=& 0.
\end{eqnarray*}
By using Lemma \ref{algebraic.lemma} we can restrict our computations
to $A^*_T$. We can therefore apply Lemma \ref{hyperplane.T.classes} to get
\[
[ S_2 ] = (h_1 + t) (h_2 + 2t).
\]
On the other hand, we have the inclusion $\wt{\pi}_2(S_2) \subset\wt
{Z}$. Let $\gamma$ be any multiple of the class $ [ S_2
]$. By commutativity of the diagram we have
\[
\pi_{2*} ( \sigma_{2*} (\gamma) ) \in I_{\wt{Z}}.
\]
Now, we choose $\gamma:= t \cdot [ S_2  ]$. A simple
computation shows that
\[
\psi(\gamma)=(2x + y - 2c_1)z^2 + (xy -
2c_2)z - 2c_3.
\]

Arguing as in Lemma \ref{lemma.point}, we get that $\sigma
_{2*}(\gamma)$ is the coefficient of $z^2$ evaluated at $(h_1,h_2)$,
that is, $\sigma_{2*}(\gamma) = 2h_1 + h_2 - 2c_1$. In particular, we get
\begin{equation}
\label{2h1.plus.h2} 2 \pi_{2*}(h_1) + \pi_{2*}(h_2)
- 2c_1\delta_2 \in I_{\wt{Z}}.
\end{equation}
Combining identities (\ref{h1.plus.h2}) and (\ref{2h1.plus.h2}), we
get
\[
\pi_{2*}(h_1) \in ( \alpha_1,
\alpha_2, \alpha_3, \delta_{2} ).
\]

To determine $\pi_{2*}(h_1^2)$, we first apply the push--pull formula
to get
\begin{equation}
\label{square.h1.plus.h2} \pi_{2*}(h_1^2) + 2
\pi_{2*}(h_1h_2) + \pi_{2*}(h_2^2)
= h_3^2\delta_2.
\end{equation}

Arguing as before, we have
\[
\sigma_{2*} ( h_2 t [S_2 ] ) =
2h_1h_2 + h^2_2 -
2c_1h_2,
\]
and, consequently,
\[
2\pi_{2*}(h_1h_2) + \pi_{2*}(h^2_2)
- 2c_1\pi_{2*}(h_2) \in I_{\wt{Z}}.
\]
Combining with identity (\ref{square.h1.plus.h2}), we get
\[
\pi_{2*}(h_1^2) \in ( \alpha_1,
\alpha_2, \alpha_3, \delta _{2} ).
\]
%
%\upqed
\end{proof}

%%%%%%%%%%%%%%%%%%%%%%%%%%%%%%%%%%%%%%%%%%%%%%%%%%%%%%%%%%%%%%%%%%
\subsection{The Ideal $i_*  ( A^{\GL_3}_*  ( Z
) )$ Is Contained in $ ( \alpha_1, \alpha_2, \alpha_3,
\delta_{2}  )$}\label{conclusion}
%%%%%%%%%%%%%%%%%%%%%%%%%%%%%%%%%%%%%%%%%%%%%%%%%%%%%%%%%%%%%%%%%%

Since we already know that $i_* ( A^G_*  ( \ov{Z}_2
) )$ is contained in $ ( \alpha_1, \alpha_2, \alpha_3,
\delta_{2}  )$, we can restrict to $Z_1$. Notice that $Z_1$ has
a stratification given by the locus of nodal cubics $Z'_1$ and the
locus of cubics with a cusp $Z''_1$. First of all, notice that the
restriction map
\[
 \wt{Z} \rrvert _{Z'_1} \xrightarrow{\pi_1}
Z'_1
\]
is an isomorphism, and therefore $\pi_{1*}$ is an isomorphism of
$A^*_{\GL_3}$-modules.

Let us now consider the restriction map:
\[
 \wt{Z} \rrvert _{Z''_1} \xrightarrow{\pi_1}
Z''_1.
\]
A simple calculation shows that the length of the fibers is two, and
since $\Char(k)>2$, it follows that the map is a Chow envelope.

We then argue as in Corollary \ref{tangent} to conclude the proof of
the following theorem.

\begin{theorem}\label{main.theorem}
Assume that the base field $k$ has the characteristic different from 2
and 3. Then
\[
i_* ( A^{\GL_3}_* (Z) ) = ( \alpha_1, \alpha_2,
\alpha_3, \delta_2 ),
\]
where
\begin{align*}
\begin{split} \alpha_1&=12(h_3 -
c_1),
\\
\alpha_2&= 6h_3^2 - 4h_3c_1
- 6 c_2,
\\
\alpha_3&= h_3^3 - h_3^2c_1
+ h_3c_2-9c_3,
\\
\delta_2 &=21h_3^2 - 42h_3c_1
+ 9c_2 + 18c_1^2. \end{split} %
\end{align*}
\end{theorem}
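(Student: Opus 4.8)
The plan is to assemble the theorem from the stratification results proved in the preceding subsections, using the basic principle of Section~\ref{basic.principle}. The inclusion $(\alpha_1,\alpha_2,\alpha_3,\delta_2) \subseteq i_*(A^{\GL_3}_*(Z))$ is already essentially known: $\alpha_1 = \delta_1 = [Z]$ lies in $I_{\widetilde Z} \subseteq I_Z$, the classes $\alpha_2,\alpha_3$ lie in $I_{\widetilde Z}$ by Theorem~\ref{generators.I.Z.tilde} (which identifies $I_{\widetilde Z} = (\alpha_1,\alpha_2,\alpha_3) \subseteq I_Z$), and $\delta_2 = [\ov Z_2]$ is by definition the image of the fundamental class of $\ov Z_2$ under $i_*$, hence lies in $i_*(A^{\GL_3}_*(Z))$ since $\ov Z_2 \subseteq Z$. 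The explicit polynomial formulas for $\alpha_1,\alpha_2,\alpha_3$ are those of display~\eqref{alpha.classes}, and the formula for $\delta_2$ is Proposition~\ref{class.delta.2}.

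For the reverse inclusion $i_*(A^{\GL_3}_*(Z)) \subseteq (\alpha_1,\alpha_2,\alpha_3,\delta_2)$, I would invoke the stratification $Z_7 \subset \ov Z_5 \subset \ov Z_4 \subset \ov Z_{(3,1)}\cup\ov Z_{(3,2)} \subset \ov Z_2 \subset Z$ and apply the basic principle of Section~\ref{basic.principle}. Working from the deepest strata outward: Proposition~\ref{first.stratum} gives $i_*(A^{\GL_3}_*(\partial Z_4)) \subseteq I_{\widetilde Z}$; Corollary~\ref{tangent} gives $i_*(A^{\GL_3}_*(\ov Z_{(3,1)})) \subseteq I_{\widetilde Z}$; Corollary~\ref{gamma.in.IZtilde} gives $i_*(A^{\GL_3}_*(\ov Z_{(3,2)})) \subseteq I_{\widetilde Z}$; and since $\ov Z_3 = \ov Z_{(3,1)} \cup \ov Z_{(3,2)}$, combining these yields $i_*(A^{\GL_3}_*(\ov Z_3)) \subseteq I_{\widetilde Z} = (\alpha_1,\alpha_2,\alpha_3)$. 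The last subsection (Section~\ref{Z2}) then shows $i_*(A^{\GL_3}_*(\ov Z_2)) \subseteq (\alpha_1,\alpha_2,\alpha_3,\delta_2)$, working over the open complement $\Pro(W_3) \setminus \ov Z_3$.

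What remains is to handle $Z_1 = Z \setminus \ov Z_2$, the locus of reduced irreducible singular cubics, over the open set $\Pro(W_3) \setminus \ov Z_2$. Here I would use the stratification $Z_1 = Z_1' \sqcup Z_1''$ into nodal and cuspidal cubics. Over $Z_1'$ the restriction $\widetilde Z|_{Z_1'} \to Z_1'$ is an isomorphism (the generic singular cubic has exactly one node), so $\pi_{1*}$ is an isomorphism of $A^*_{\GL_3}$-modules and every class pushed forward from $Z_1'$ already lies in $I_{\widetilde Z}$. Over $Z_1''$ the restriction $\widetilde Z|_{Z_1''} \to Z_1''$ has fibers of length two, and since $\Char(k) > 2$ one checks (as in the proof of Proposition~\ref{first.stratum}, via injectivity of Jacobian maps) that this map is an equivariant Chow envelope, so $\pi_{1*}\colon A^{\GL_3}_*(\widetilde Z|_{Z_1''}) \to A^{\GL_3}_*(Z_1'')$ is surjective by \cite[Lemma~3]{EG} and \cite[Lemma~18.3(6)]{Ful}; arguing exactly as in Corollary~\ref{tangent} then gives $i_*(A^{\GL_3}_*(\ov{Z_1''}))$ contained in $I_{\widetilde Z}$ up to classes supported on the boundary, which lie in the already-treated deeper strata. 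Assembling all these pieces via the basic principle gives $i_*(A^{\GL_3}_*(Z)) \subseteq (\alpha_1,\alpha_2,\alpha_3,\delta_2)$, completing the proof.

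The main obstacle is not in the present theorem at all — it is concentrated in the earlier Section~\ref{Z32} (Corollary~\ref{gamma.in.IZtilde} and its supporting results), which handles the locus of three concurrent-or-not lines; by the time we reach this theorem all the hard work of stratum-by-stratum analysis, explicit localization computations, and the delicate $2$-versus-$3$ divisibility argument of Proposition~\ref{2gamma} has been done. Here the only genuine content is the bookkeeping that verifies the strata fit together in the right order (so that the basic principle applies) and the Chow-envelope argument for cuspidal cubics, which is a routine repetition of the pattern established in Proposition~\ref{first.stratum} and Corollary~\ref{tangent}, relying on $\Char(k) \neq 2, 3$ to guarantee the relevant Jacobian maps are injective.
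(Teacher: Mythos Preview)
Your proposal is correct and matches the paper's own proof almost verbatim: the paper likewise reduces to $Z_1$ via the earlier subsections, stratifies $Z_1$ into nodal and cuspidal loci, observes that $\wt{Z}|_{Z'_1}\to Z'_1$ is an isomorphism and that $\wt{Z}|_{Z''_1}\to Z''_1$ has fibers of length two (hence is a Chow envelope since $\Char(k)>2$), and then invokes the argument of Corollary~\ref{tangent}. The only minor wrinkle is your parenthetical ``via injectivity of Jacobian maps'' for the cuspidal case: the paper does not phrase it that way, and the Chow-envelope property here is not established by the same Jacobian-isomorphism argument used in Proposition~\ref{first.stratum}, but this does not affect the correctness of your outline.
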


Main Theorem in the Introduction follows immediately.

%\begin{appendix}
%\section{}
%\end{appendix}

% \begin{ack}
% \end{ack}
 %\begin{acks}
\subsection*{Acknowledgements.}
We would like to thank the referees
for useful comments and suggestions.
\end{document}